\documentclass[10pt]{amsart}
\usepackage{amsfonts}
\usepackage{amssymb}
\usepackage{amscd}
\usepackage{amsthm}
\usepackage[dvips]{graphicx}
\usepackage[all,cmtip]{xy}
\usepackage{hyperref}
\usepackage{yhmath}

\usepackage{ulem}
\usepackage[all]{xy}
\usepackage{mathdots}
\usepackage{leftidx}
\usepackage{mathrsfs}
\usepackage{amsmath,amssymb, amsthm}
\usepackage{color}
\usepackage{tikz}
\usepackage{picture}
\usepackage{enumerate}
\usepackage{makecell}
\usepackage{extarrows}
\usepackage{graphicx}
\usepackage{caption}
\usepackage{subcaption}
\usepackage{float}
\numberwithin{equation}{section}
\newtheorem{Thm}{Theorem}[section]
\newtheorem{Lem}[Thm]{Lemma}
\newtheorem{Def}[Thm]{Definition}
\newtheorem{Cor}[Thm]{Corollary}
\newtheorem{Prop}[Thm]{Proposition}

\newtheorem{Rem}[Thm]{Remark}

\title[ on simple-minded systems]{ on simple-minded systems over representation-finite self-injective algebras}
\author{Jing Guo, Yuming Liu, Yu Ye and Zhen Zhang}
\address{Jing Guo
\newline School of Mathematical Sciences
\newline University of Science and Technology of China
\newline Hefei, Anhui 230026
\newline P.R.China}
\email{gjws@mail.ustc.edu.cn}
\address{Yuming Liu
\newline School of Mathematical Sciences
\newline Laboratory of Mathematics and Complex Systems
\newline Beijing Normal University
\newline Beijing 100875
\newline P.R.China}
\email{ymliu@bnu.edu.cn}
\address{Yu Ye
\newline School of Mathematical Sciences
\newline Wu Wen-Tsun Key Laboratory of Mathematics
\newline University of Science and Technology of China
\newline Hefei, Anhui 230026
\newline P.R.China}
\email{yeyu@ustc.edu.cn}
\address{Zhen Zhang
\newline School of Mathematical Sciences
\newline Beijing Normal University
\newline Beijing 100875
\newline P.R.China}
\email{zhangzhen@mail.bnu.edu.cn}

\date{version of \today}

\newenvironment{Proof}[1][Proof]{\begin{trivlist}
\item[\hskip \labelsep {\bfseries #1}]}{\flushright
$\Box$\end{trivlist}}

\newcommand{\lra}{\longrightarrow}

\newcommand{\ra}{\rightarrow}
\newcommand{\sdp}{\times\kern-.2em\vrule height1.1ex depth-.05ex}
\newcommand{\epi}{\lra \kern-.8em\ra}

\newcommand{\Z}{{\mathbb Z}}

\newcommand{\stmod}{\underline{\mathrm{mod}}}
\newcommand{\Hom}{\mathrm{Hom}}
\newcommand{\ind}{\mathrm{ind}}
\newcommand{\stHom}{\underline{\mathrm{Hom}}}
\newcommand{\stind}{\underline{\mathrm{ind}}}
\newcommand{\Supp}{\mathrm{Supp}}

\setlength{\textwidth}{16.3cm} \setlength{\textheight}{23.5cm}
\setlength{\topmargin}{0.2cm} \setlength{\oddsidemargin}{-3mm}
\setlength{\evensidemargin}{-3mm} \setlength{\abovedisplayskip}{3mm}
\setlength{\belowdisplayskip}{3mm}
\setlength{\abovedisplayshortskip}{0mm}
\setlength{\belowdisplayshortskip}{2mm} \normalbaselines
\raggedbottom

\begin{document}
\renewcommand{\thefootnote}{\alph{footnote}}
\renewcommand{\thefootnote}{\alph{footnote}}
\setcounter{footnote}{-1} \footnote{\it{Mathematics Subject
Classification(2010)}$\colon$16G20, 18Gxx.}
\renewcommand{\thefootnote}{\alph{footnote}}
\setcounter{footnote}{-1}
\footnote{\it{Keywords}$\colon$Simple-minded system; RFS algebra; Orthogonal system; Nakayama-stable.}

\begin{abstract}
Let $A$ be a representation-finite self-injective algebra over an algebraically closed field $k$. We give a new characterization for an orthogonal system in the stable module category $A$-$\stmod$ to be a simple-minded system. As a by-product, we show that every Nakayama-stable orthogonal system in $A$-$\stmod$ extends to a simple-minded system.

\end{abstract}
\maketitle

\section{Introduction}

As an attempt towards a tilting theory for stable equivalences between finite dimensional algebras, Koenig and Liu \cite{KL} introduced simple-minded systems in the stable module category $A$-$\stmod$ of any finite dimensional algebra $A$. Roughly speaking, a simple-minded system over $A$ is a family of objects in $A$-$\stmod$ which satisfies orthogonality and a generating condition. Later, Dugas \cite{Dugas} defined simple-minded systems in any Hom-finite Krull-Schmidt triangulated category. The two definitions are equivalent in the stable module category of a self-injective algebra (cf. \cite[Section 2.1]{CLZ}).
In \cite{C}, Coelho Sim\~{o}es introduced simple-minded systems in $-d$-Calabi-Yau triangulated categories for any negative integer $d$. There is a recent rise of interests in studying $d$-simple-minded systems (see, for example, \cite{CP,CPP,IJ}).
 
On the other hand, Chan, Koenig and Liu \cite{CKL} noticed  that for a representation-finite self-injective algebra $A$, the simple-minded systems in $A$-$\stmod$ correspond exactly to the combinatorial configurations in the Auslander-Reiten quiver of $A$, a key notion introduced by Riedtmann (\cite{Riedtmann2,Riedtmann3,Riedtmann4}) in the 1980's in her famous work on classification of representation-finite self-injective
algebras. A similar notion in $-d$-Calabi-Yau triangulated categories is called $d$-Riedtmann configuration (see \cite{CP}) or $-d$-Calabi-Yau configuration (see \cite{J1}). The connection between simple-minded systems and combinatorial configurations is quite useful since the combinatorial configurations are often easier to handle.
	

In general, it is hard to check the two conditions in the definition of a simple-minded system.
So it is important to find easier characterizations of simple-minded systems. In this paper, we will give such a characterization  of simple-minded systems over representation-finite self-injective algebras.
Before stating our result, we recall some notations and results from \cite{CKL}. Let $A$ be an RFS algebra (that is, indecomposable, basic representation-finite self-injective algebra ($\ncong k$) over an algebraically closed field $k$) and $\mathcal{S}$ a simple-minded system in $A$-$\stmod$.
Then, according to \cite{CKL}, $\mathcal{S}$ is an orthogonal system (see Definition \ref{brick-orthogonal-system} ) in $A$-$\stmod$ (orthogonality condition), the cardinality of $\mathcal{S}$ is equal to the number of non-isomorphic simple $A$-modules (cardinality condition), and the Nakayama functor on $A$-$\stmod$ permutes the objects of $\mathcal{S}$ (Nakayama-stable condition).
The  main result in this paper says that the above three conditions are also sufficient for a family of objects in $A$-$\stmod$ to be a simple-minded system.

\begin{Thm} Let $A$ be an RFS algebra and $\mathcal{S}$ a family of objects in $A$-$\stmod$. Then $\mathcal{S}$ is a simple-minded system if and only if $\mathcal{S}$ satisfies the following three conditions.
\begin{enumerate}[$(1)$]
\item $\mathcal{S}$ is an orthogonal system in $A$-$\stmod$.
\item The cardinality of $\mathcal{S}$ is equal to the number of non-isomorphic  simple $A$-modules.
\item $\mathcal{S}$ is Nakayama-stable, that is, the Nakayama functor on $A$-$\stmod$ permutes the objects of $\mathcal{S}$.
\end{enumerate}
\end{Thm}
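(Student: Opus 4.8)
The ``only if'' direction is precisely the combination of the results of Chan, Koenig and Liu recalled above, so the substance of the statement is the ``if'' direction: the three conditions $(1)$--$(3)$ must force the generating condition in the definition of a simple-minded system. My plan is to prove an extension lemma for Nakayama-stable orthogonal systems and then to deduce the theorem by a counting argument, using the cardinality result of \cite{CKL} as the only external input.

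First I would set up the combinatorial picture. Since $k$ is algebraically closed, each object $S\in\mathcal{S}$ is a brick (its stable endomorphism algebra being a finite-dimensional division algebra, hence $k=\bar k$), in particular indecomposable, so $\mathcal{S}$ determines a set $C$ of vertices of the stable Auslander--Reiten quiver $\Gamma_s\cong\mathbb{Z}\Delta/G$ of $A$. Condition $(1)$ says exactly that $\stHom(X,Y)=\delta_{XY}\,k$ for $X,Y\in C$, which is the orthogonality axiom of a Riedtmann configuration; via the correspondence of \cite{CKL} the generating condition becomes the completeness axiom, namely that every vertex $Z$ of $\Gamma_s$ is met by $C$ in the sense that $\stHom(X,Z)\neq 0$ for some $X\in C$. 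Two tools are available here: the Auslander--Reiten duality $\stHom(X,Y)\cong D\,\stHom(Y,\tau X)$ with $\tau=\Omega^{2}\nu$, and condition $(3)$, which guarantees that $C$ is stable under the Nakayama functor $\nu$. Combining them lets one pass between the two one-sided covering conditions ``$\stHom(X,Z)\neq0$ for some $X\in C$'' and ``$\stHom(Z,X)\neq0$ for some $X\in C$''; this is the point of Nakayama-stability, as it makes both the orthogonality and the Hom-counting genuinely two-sided.

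The heart of the proof is the following extension lemma, which is also the stated by-product: \emph{a Nakayama-stable orthogonal system that does not satisfy the generating condition can be enlarged to a strictly larger Nakayama-stable orthogonal system.} To prove it I would assume that completeness fails, choose an uncovered vertex, and analyse the region of $\Gamma_s$ not met by $C$; using the mesh relations in $\Gamma_s$ I would locate an indecomposable $T$ in this region that is a brick and is orthogonal to every object of $\mathcal{S}$, and then adjoin the entire $\nu$-orbit of $T$ in order to preserve Nakayama-stability. I expect this step to be the main obstacle: one must arrange simultaneously that $T$ is a brick, that $\stHom$ between $T$ and $\mathcal{S}$ vanishes in both directions (this is where Auslander--Reiten duality and the $\nu$-stability of $C$ enter), and that adjoining the whole $\nu$-orbit of $T$ does not create any nonzero stable morphisms inside the enlarged family.

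Granting the extension lemma, the theorem follows quickly. If $\mathcal{S}$ satisfied $(1)$--$(3)$ but failed to be a simple-minded system, the lemma would produce a strictly larger Nakayama-stable orthogonal system; iterating, and using that $A$ is representation-finite so that the cardinalities are bounded and the process terminates, would yield a simple-minded system $\mathcal{S}'$ with $|\mathcal{S}'|>|\mathcal{S}|=n$, where $n$ is the number of non-isomorphic simple $A$-modules. But by the cardinality result of \cite{CKL} recalled above, every simple-minded system has exactly $n$ elements, a contradiction. Hence $\mathcal{S}$ must already be a simple-minded system, which establishes the ``if'' direction and completes the proof.
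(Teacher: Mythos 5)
Your overall architecture is sound and in fact mirrors part of the paper's argument: the ``only if'' direction is quoted from \cite{CKL}, and for the ``if'' direction the paper also, for most types, enlarges a Nakayama-stable orthogonal system by adjoining $\nu$-orbits, terminates by representation-finiteness, and then invokes the cardinality result of \cite{CKL} to force $\mathcal{S}_p=\mathcal{S}$. But there are two genuine gaps. First, your extension lemma --- the entire technical content of the theorem --- is only announced, not proved, and its naive form is \emph{false}: for a standard algebra of type $(D_{3m},s/3,1)$ the $\nu$-orbit of a vertex $(p,q)$ with $m\leq q<3m-1$ is \emph{not} an orthogonal system (such a vertex need not even be a stable brick when $s=1$), and for self-injective Nakayama algebras the indecomposables of intermediate Loewy length are not bricks either. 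So ``locate an uncovered brick $T$ and adjoin $O_{\nu}(T)$'' cannot be carried out uniformly. The paper must prove case-by-case (its Lemmas \ref{Dn-En-stable-brick}--\ref{orthogonal-nu-orbit-of-D_{3m}}) exactly which $\nu$-orbits are orthogonal, and for the two problematic families it abandons the extension strategy altogether in favour of a direct combinatorial count: it shows that the supports $\Supp(\stHom_A(S,-))\cup\Supp(\stHom_A(-,S))$ of the $nf$ objects of $\mathcal{S}$ must cover all the $\overrightarrow{\Delta}$-lines of ${}_s\Gamma_A$, whence ${}^{\perp}\mathcal{S}^{\perp}=\{0\}$ directly. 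For Nakayama algebras the later extension result (Lemma \ref{orthogonal-system-of-A_n-extend-to-sms-1}) has to replace a non-brick $X_i(as+b)\in{}^{\perp}\mathcal{S}^{\perp}$ by a carefully chosen brick obtained from an explicit triangle; none of this is visible in your sketch.

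Second, your reduction from ``the generating condition fails'' to ``there is a nonzero object orthogonal to $\mathcal{S}$ on \emph{both} sides'' does not follow from Auslander--Reiten duality alone. Serre duality gives $\stHom_A(Z,S)\cong D\stHom_A(S,\nu\Omega Z)$, so $Z\in{}^{\perp}\mathcal{S}$ is equivalent to $\nu\Omega Z\in\mathcal{S}^{\perp}$ --- two different objects; Nakayama-stability of $\mathcal{S}$ does not merge them into a single element of ${}^{\perp}\mathcal{S}^{\perp}$. The paper closes this gap with torsion-pair theory: since $\mathcal{F}(\mathcal{S})$ is functorially finite, $({}^{\perp}\mathcal{S},\mathcal{F}(\mathcal{S}))$ and $(\mathcal{F}(\mathcal{S}),\mathcal{S}^{\perp})$ are torsion pairs (Iyama--Yoshino), and for $0\neq Y\in\mathcal{S}^{\perp}$ the minimal triangle $\mathbf{a}Y\to Y\to\mathbf{b}Y\to$ yields $0\neq\mathbf{a}Y\in{}^{\perp}\mathcal{S}^{\perp}$ (Dugas). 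Without this ingredient, or a substitute for it, your induction cannot even get started. You should either import Lemmas \ref{Nakayaka-stable-conclusion-2} and \ref{two-torsion-pairs} explicitly, or supply an independent argument producing a two-sided orthogonal brick whenever $\mathcal{F}(\mathcal{S})\neq A$-$\stmod$.
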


There are two main ingredients in the proof of the above theorem. One is the torsion pair theory studied by Iyama-Yoshino \cite{IY} and by Dugas \cite{Dugas}. The other one is the covering theory developed by Riedtmann \cite{Riedtmann2} and by Bongartz-Gabriel \cite{BG}. From the proof of Theorem 1.1, we also deduce some new properties of orthogonal systems in $A$-$\stmod$. In particular, we prove the following extendible property of Nakayama-stable orthogonal systems in $A$-$\stmod$.

\begin{Thm} Let $A$ be an RFS algebra. Then every Nakayama-stable orthogonal system $\mathcal{S}$ in $A$-$\stmod$ extends to a simple-minded system.
\end{Thm}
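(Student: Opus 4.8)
The plan is to deduce Theorem 1.2 from Theorem 1.1 by showing that every Nakayama-stable orthogonal system can be enlarged, while preserving both orthogonality and Nakayama-stability, until its cardinality reaches the number $n$ of non-isomorphic simple $A$-modules. Since $A$ is representation-finite, $A$-$\stmod$ has only finitely many indecomposable objects, so every orthogonal system is finite and any ascending chain of orthogonal systems stabilizes; hence the given $\mathcal{S}$ is contained in some $\mathcal{S}^{\ast}$ that is maximal among Nakayama-stable orthogonal systems. By Theorem 1.1 it then suffices to prove $|\mathcal{S}^{\ast}| = n$, that is, that a Nakayama-stable orthogonal system of cardinality strictly less than $n$ is never maximal.

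So I would assume $\mathcal{S}$ is Nakayama-stable and orthogonal with $|\mathcal{S}| = m < n$, and aim to produce a strictly larger Nakayama-stable orthogonal system. First I would invoke the torsion pair theory of Iyama--Yoshino and Dugas: the extension closure of an orthogonal system sits inside a torsion pair in $A$-$\stmod$, and the associated Iyama--Yoshino reduction $\mathcal{U}$ (the subfactor triangulated category cut out by the left and right perpendicular conditions with respect to $\mathcal{S}$) is again a Hom-finite Krull--Schmidt triangulated category. Orthogonality of $\mathcal{S}$ is what makes this reduction well defined, and a rank count --- the classes of the members of an orthogonal system are linearly independent in the relevant stable Grothendieck group, whose rank is $n$ --- shows that $m < n$ forces $\mathcal{U} \neq 0$. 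A nonzero Hom-finite Krull--Schmidt triangulated category contains an indecomposable object and, choosing a suitable one, a brick $X$; pulling $X$ back, it is a brick in $A$-$\stmod$ orthogonal to every object of $\mathcal{S}$.

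The main obstacle is that such an $X$ need not have a Nakayama-stable orthogonal orbit: the objects $X, \nu X, \nu^{2}X, \dots$ need not be pairwise orthogonal, so I cannot simply adjoin the whole $\nu$-orbit of $X$. This is exactly where covering theory enters. Because $\mathcal{S}$ is $\nu$-stable, the Nakayama functor descends to an automorphism of the reduction $\mathcal{U}$, and the task becomes that of finding a $\nu$-stable orthogonal family inside $\mathcal{U}$. I would resolve this by passing to the universal cover: the stable Auslander--Reiten quiver of $A$ has the form $\mathbb{Z}\Delta/G$ for a Dynkin diagram $\Delta$ and an admissible group $G$ containing the automorphism induced by $\nu$, and by the covering theory of Riedtmann and of Bongartz--Gabriel the orthogonal systems correspond to $G$-equivariant combinatorial configurations in $\mathbb{Z}\Delta$. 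Nakayama-stability downstairs corresponds precisely to $G$-equivariance upstairs, and in the rigid combinatorial setting of $\mathbb{Z}\Delta$ one can extend a given $G$-stable partial configuration by a full $G$-orbit of a new vertex while preserving the configuration (orthogonality) conditions. Pushing this enlargement back down yields a Nakayama-stable orthogonal system strictly containing $\mathcal{S}$, contradicting maximality unless $m = n$.

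Combining these steps, $\mathcal{S}^{\ast}$ must have cardinality $n$; being orthogonal, Nakayama-stable and of cardinality $n$, it is a simple-minded system by Theorem 1.1, and it contains $\mathcal{S}$, which proves the statement. The step I expect to require the most care is the covering-theoretic enlargement of a $G$-stable configuration, since one must ensure simultaneously that the new orbit is internally orthogonal, orthogonal to the existing system, and closed under $G$: the torsion-pair reduction only delivers a single new brick, and promoting it to a genuinely $\nu$-stable orthogonal family is the heart of the argument.
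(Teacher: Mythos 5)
Your overall strategy (enlarge $\mathcal{S}$ to a maximal Nakayama-stable orthogonal system and then verify the hypotheses of Theorem 1.1) is close in spirit to the paper's, which iteratively adjoins objects until $\mathcal{^{\perp}S^{\perp}}=\{0\}$ and then concludes via the torsion-pair argument (Corollary \ref{symmetric-case}) rather than via a cardinality count. But your proposal has a genuine gap at precisely the step you yourself flag as the heart of the matter. You assert that, after passing to the universal cover $\mathbb{Z}\Delta$, ``one can extend a given $G$-stable partial configuration by a full $G$-orbit of a new vertex while preserving the orthogonality conditions.'' This is false for an arbitrary new vertex: by Lemma \ref{orthogonal-nu-orbit-of-D_{3m}}(3), for type $(D_{3m},s/3,1)$ the $\nu$-orbit of a low vertex $(p,q)$ with $m\leq q<3m-1$ is \emph{not} an orthogonal system, and for the self-injective Nakayama algebras of type $(A_n,s/n,1)$ the chosen indecomposable need not even be a stable brick. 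The whole content of the paper's Lemmas \ref{Dn-En-stable-brick}--\ref{orthogonal-nu-orbit-of-D_{3m}} and \ref{orthogonal-system-of-A_n-extend-to-sms-1}--\ref{orthogonal-system-of-D_n-extend-to-sms-1} is to show, type by type, either that every $\nu$-orbit works (the ``uniform'' types), or how to \emph{replace} a bad choice by a good one: in the Nakayama case one uses explicit triangles (3.2)--(3.3) to trade a non-brick $X_i(as+b)\in\mathcal{^{\perp}S^{\perp}}$ for a stable brick of controlled Loewy length still lying in $\mathcal{^{\perp}S^{\perp}}$; in the $(D_{3m},s/3,1)$ case one shows a high vertex can always be adjoined first, after which everything left in $\mathcal{^{\perp}S^{\perp}}$ is a brick with orthogonal $\nu$-orbit. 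Without an argument of this kind your induction does not get off the ground.

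A secondary problem is your justification that $m<n$ forces the reduction $\mathcal{U}\neq 0$: the triangulated Grothendieck group of $A$-$\stmod$ is $\mathbb{Z}^n$ modulo the image of the Cartan matrix, which is typically a finite (torsion) group, so it does not have rank $n$ and classes of an orthogonal system need not be linearly independent in it. This step is repairable without Grothendieck groups: if $\mathcal{^{\perp}S^{\perp}}=\{0\}$ then the torsion-pair argument already shows $\mathcal{S}$ is an sms and hence has exactly $n$ elements, so $|\mathcal{S}|<n$ forces $\mathcal{^{\perp}S^{\perp}}\neq\{0\}$. But the first gap is essential and is exactly where the paper's case analysis cannot be avoided.
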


This paper is organized as follows. In Section 2,
we recall some notions and facts on torsion pair theory, covering theory and simple-minded systems.
In Subsection 3.1, we prove our main result Theorem 1.1 and give some applications. Our proof here is based on three technical lemmas$\colon$Lemma \ref{Nakayaka-stable-conclusion-2}, Lemma \ref{two-torsion-pairs}, and Lemma \ref{two-side-orthogonal}. The first two lemmas come from torsion pair theory and the last one relies on several interesting orthogonality properties in the stable module categories of several classes of RFS algebras (Lemma \ref{Dn-En-stable-brick} to Lemma \ref{orthogonal-nu-orbit-of-D_{3m}}). In Subsection 3.2, we prove Theorem 1.2 and its corollary.

\section*{ Acknowledgements} The authors are supported by NSFC (No.11331006, No.11431010, No.11571329 and  No.11971449). We would like to thank Steffen Koenig and Aaron Chan for comments and many suggestions on the presentation of this paper.  We are grateful to the referee for valuable comments and suggestions.

\section{Preliminaries}

Throughout this paper, $k$ denotes an algebraically closed field, all algebras are assumed to be finite dimensional
$k$-algebras with $1$. For an algebra $A$, we denote by $A$-mod the category of finite dimensional left $A$-modules, and by $A$-$\stmod$ the stable category of $A$-mod, that is, the category with the same class of objects but with morphism spaces $\stHom_A(X,Y)$ being the quotient of the ordinary one $\mathrm{Hom}_A(X,Y)$ by maps factoring through projective modules.

\subsection{Torsion pair theory }

We briefly recall the torsion pair theory on a Hom-finite Krull-Schmidt triangulated
$k$-category in the sense of Dugas \cite{Dugas}. Let $\mathcal{T}$ be a Hom-finite Krull-Schmidt triangulated
$k$-category with suspension functor $[1]$. For any families $\mathcal{S}_{1}, \mathcal{S}_{2}$ of objects in $\mathcal{T}$, we
define a family of objects
$$\mathcal{S}_{1}\ast\mathcal{S}_{2}:=\{ X\in \mathcal{T}\mid \mbox{ There is a distinguished triangle }S_{1} \longrightarrow  X \longrightarrow S_{2} \longrightarrow S_{1}[1],  S_{1}\in \mathcal{S}_{1}, S_{2}\in \mathcal{S}_{2}\}. $$
 Using the octahedral axiom, it is easy to show  that $(\mathcal{S}_{1}\ast\mathcal{S}_{2})\ast\mathcal{S}_{3}=\mathcal{S}_{1}\ast(\mathcal{S}_{2}\ast\mathcal{S}_{3})$
for $\mathcal{S}_{1}, \mathcal{S}_{2}, \mathcal{S}_{3}\subseteq\mathcal{T}$. For a family $\mathcal{S}$ of objects in $\mathcal{T}$,
we denote $(\mathcal{S})_{0}=\{0\}$, and for any positive integer $n$,
we inductively define $(\mathcal{S})_{n}=(\mathcal{S})_{n-1}\ast(\mathcal{S}\cup\{0\})$.
$(\mathcal{S})_{n}\ast(\mathcal{S})_{m}=(\mathcal{S})_{n+m}$ for any non-negative integers $m$ and $n$ (cf. \cite[Lemma 2.3]{Dugas}).
Similarly, one can define $ _{n}(\mathcal{S})$, and we have $(\mathcal{S})_{n}$=$_{n}(\mathcal{S})$. We say that $\mathcal{S}$
is {\it extension-closed}, if $\mathcal{S}\ast\mathcal{S}\subseteq \mathcal{S}$. One denotes the extension closure of a family
$\mathcal{S}$ of objects in $\mathcal{T}$ as
$$\mathcal{F}(\mathcal{S}):=\bigcup_{n\geq0}(\mathcal{S})_{n},$$
which is the smallest extension closed full subcategory of $\mathcal{T}$ containing $\mathcal{S}$. Notice that we identify $\mathcal{S}$
with the corresponding full (usually not triangulated) subcategory of $\mathcal{T}$.

\begin{Def}\label{brick-orthogonal-system}
  An object $M$ in $\mathcal{T}$ is a stable brick if $\mathcal{T}(M,M)\cong k$.   Moreover, a family $\mathcal{S}$ of stable bricks in $\mathcal{T}$ is an orthogonal system if $\mathcal{T}(M,N)=0$ for all distinct  $M, N$ in $\mathcal{S}$.
\end{Def}

\begin{Lem}\label{orthogonal-bricks-closed-summands}{\rm(\cite[Lemma 2.7]{Dugas})} If $\mathcal{S}\subseteq \mathcal{T}$ is an orthogonal system, then $(\mathcal{S})_{n}$ is closed under direct summands for each positive integer $n\geq1$. In particular, $\mathcal{F}(\mathcal{S})$ is closed under direct summands.
\end{Lem}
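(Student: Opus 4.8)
The plan is to prove, by induction on $n$, that $(\mathcal{S})_n$ is closed under direct summands; the claim for $\mathcal{F}(\mathcal{S})=\bigcup_{n}(\mathcal{S})_n$ then follows at once, since any summand of an object of $\mathcal{F}(\mathcal{S})$ already lies in some $(\mathcal{S})_n$. As $\mathcal{T}$ is Krull--Schmidt it is idempotent complete, so summands are genuine objects, and it suffices to place every indecomposable summand of an object of $(\mathcal{S})_n$ back in $(\mathcal{S})_n$. The base case $n=1$ is immediate: an object of $(\mathcal{S})_1=\mathcal{S}\cup\{0\}$ is either $0$ or a stable brick, and a brick is indecomposable because $\mathcal{T}(M,M)\cong k$ is local.

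For the inductive step I fix $X\in(\mathcal{S})_n$ together with a distinguished triangle $A\xrightarrow{f}X\xrightarrow{g}S\xrightarrow{h}A[1]$ with $A\in(\mathcal{S})_{n-1}$ and $S\in\mathcal{S}\cup\{0\}$. If $S=0$ then $X\cong A\in(\mathcal{S})_{n-1}$ and the inductive hypothesis applies, so I may assume $S$ is a brick. The key is to read off from $g$ how the top copy of $S$ is distributed among the indecomposable summands of $X$. Since $\mathrm{End}_{\mathcal{T}}(S)\cong k$, any morphism into $S$ is either a split epimorphism or radical, and in a Krull--Schmidt category $g$ has a right-minimal version: I may write $X\cong X_0\oplus X_1$ with $g|_{X_0}=0$ and $g|_{X_1}=:g_1$ right minimal. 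Completing $g_1$ to a triangle $A_1\to X_1\xrightarrow{g_1}S\to A_1[1]$ and computing the fibre of $g=(0,g_1)$ yields $A\cong X_0\oplus A_1$; the inductive hypothesis then puts $X_0,A_1\in(\mathcal{S})_{n-1}$, whence $X_1\in(\mathcal{S})_{n-1}\ast(\mathcal{S}\cup\{0\})=(\mathcal{S})_n$. Thus this particular decomposition $X\cong X_0\oplus X_1$ already has both factors in $(\mathcal{S})_n$.

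To upgrade this from one adapted decomposition to an arbitrary summand I would run the induction with the stronger statement that the indecomposable summands $X_1,\dots,X_r$ of any $X\in(\mathcal{S})_n$ all lie in $\mathcal{F}(\mathcal{S})$ and satisfy $\sum_i\ell(X_i)\le n$, where $\ell(W):=\min\{m:W\in(\mathcal{S})_m\}$. Granting this, an arbitrary summand $Y\cong\bigoplus_{i\in I}X_i$ lies in $(\mathcal{S})_{\sum_{i\in I}\ell(X_i)}\subseteq(\mathcal{S})_n$, because split triangles realise each direct sum inside a $\ast$-product and $(\mathcal{S})_a\ast(\mathcal{S})_b=(\mathcal{S})_{a+b}$; this is exactly closure under summands. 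In the inductive step the refined statement applied to $A\cong X_0\oplus A_1\in(\mathcal{S})_{n-1}$ controls the summands coming from $X_0$ and bounds $\ell(A_1)$, while the triangle $A_1\to X_1\to S\to A_1[1]$ is meant to control those coming from $X_1$.

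The main obstacle is exactly this last control: one must bound the total summand-length of $X_1$ by $\ell(A_1)+1$, i.e.\ show that attaching the single top brick $S$ to $A_1$ along a right-minimal map raises the summand-length by only one and does not split off spurious pieces. This is where the full orthogonality of $\mathcal{S}$ is needed, not merely the brick condition. Right minimality of $g_1$ already forces the connecting map $S\to A_1[1]$ to be nonzero on every indecomposable summand of $A_1$ (otherwise such a summand would split off $X_1$ with $g_1$ vanishing on it); I would then analyse the spaces $\mathcal{T}(X_1,S)$ and $\mathcal{T}(S,A_1[1])$ through the long exact sequences attached to a filtration of $A_1$, using $\mathcal{T}(S,S)\cong k$ and $\mathcal{T}(S',S)=0$ for $S\ne S'$ in $\mathcal{S}$, to rule out a decomposition of $X_1$ that would detach part of $A_1$ together with an extra copy of $S$. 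The symmetric description $(\mathcal{S})_n={}_{n}(\mathcal{S})$, which furnishes a dual ``bottom'' triangle $S'\to X\to A'\to S'[1]$, provides a cross-check and an alternative attack when the analysis from the top is inconclusive. Once this length bound is secured the refined induction closes and the lemma follows.
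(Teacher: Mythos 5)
The first half of your argument is sound: the base case, the decomposition $X\cong X_0\oplus X_1$ adapted to a right-minimal version of $g$, the identification $A\cong X_0\oplus A_1$, and the conclusions $X_0\in(\mathcal{S})_{n-1}$, $X_1\in(\mathcal{S})_n$ are all correct. But this only places one particular pair of complementary summands of $X$ in $(\mathcal{S})_n$, and you rightly observe that the lemma requires more. The step you then need --- showing that attaching the top brick $S$ to $A_1$ along a right-minimal map raises the total summand-length by exactly one, so that no decomposition of $X_1$ can ``spread'' the copy of $S$ over two summands --- is precisely the entire content of the lemma and the only place where orthogonality of $\mathcal{S}$ (rather than mere brick-ness of $S$) enters. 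Your proposal stops exactly there: ``I would then analyse \dots to rule out \dots'' and ``once this length bound is secured'' are plans, not proofs, and the refined induction introduces further unverified claims (e.g.\ that $\sum_i\ell(X_i)\le n$ for the indecomposable summands of any $X\in(\mathcal{S})_n$, which is strictly stronger than closure under summands). So there is a genuine gap at the crux of the argument.

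The paper gives no proof of its own --- it cites \cite[Lemma 2.7]{Dugas} --- and the cited argument closes this gap by a more efficient device worth comparing with yours. One first proves, by induction on $n$, that for $X\in(\mathcal{S})_n$ and $S\in\mathcal{S}$ \emph{every} nonzero morphism $\beta\colon X\to S$ has cocone in $(\mathcal{S})_{n-1}$, not merely the morphism supplied by the given filtration: writing $B\xrightarrow{f}X\xrightarrow{g}S'\to B[1]$ with $B\in(\mathcal{S})_{n-1}$ and $S'\in\mathcal{S}$, either $\beta f=0$, in which case $\beta$ factors through $g$ via some $\gamma\colon S'\to S$ and orthogonality forces $S=S'$ with $\gamma$ a nonzero scalar, so $\mathrm{cocone}(\beta)\cong B$; or $\beta f\neq 0$, in which case the inductive hypothesis applies to $\beta f\colon B\to S$ and an octahedron gives $\mathrm{cocone}(\beta)\in(\mathcal{S})_{n-2}\ast\{S'\}\subseteq(\mathcal{S})_{n-1}$. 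Granting this, closure under summands is immediate: given $X_1\oplus X_2\in(\mathcal{S})_n$ with top map $(g_1,g_2)\colon X_1\oplus X_2\to S$ and, say, $g_2\neq 0$, apply the fact to the nonzero map $(0,g_2)$, whose cocone is $X_1\oplus\mathrm{cocone}(g_2)\in(\mathcal{S})_{n-1}$; the inductive hypothesis then yields $X_1\in(\mathcal{S})_{n-1}$ and $X_2\in(\mathcal{S})_{n-1}\ast(\mathcal{S}\cup\{0\})=(\mathcal{S})_n$, with no length bookkeeping. You should either import this auxiliary statement or complete your right-minimality analysis in full before relying on the lemma.
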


For any family $\mathcal{S}$ of objects in $\mathcal{T}$, we set
$$\mathcal{S^{\perp}}:=\{Y\in \mathcal{T}\mid\mathcal{T}(X,Y)=0, \forall X\in \mathcal{S}\},$$
$$\mathcal{^{\perp}S}:=\{Y\in \mathcal{T}\mid\mathcal{T}(Y,X)=0, \forall X\in \mathcal{S}\}.$$
We know that both $\mathcal{S^{\perp}}$ and $\mathcal{^{\perp}S}$ are extension closed subcategories of $\mathcal{T}$  as well as closed under direct summands. We shall denote $\mathcal{S^{\perp}}\cap\mathcal{^{\perp}S}$ by $\mathcal{^{\perp}S^{\perp}}$.

\begin{Def}\label{torsion-pair}{\rm(\cite[Definition 3.1]{Dugas})} A pair $(\mathcal{X},\mathcal{Y})$ of full, additive subcategories of $\mathcal{T}$, which are closed under direct summands, forms a torsion pair if the following conditions hold$\colon$
\begin{enumerate}[$(1)$]
\item $\mathcal{T}(\mathcal{X}, \mathcal{Y})=0$.
\item $\mathcal{T}=\mathcal{X}\ast\mathcal{Y}$, that is, for each $T\in\mathcal{T}$, there exists a distinguished triangle
   $$\xymatrix@C=0.5cm{
   X \ar[r]^{f} & T\ar[r]^{g} &Y \ar[r]^{} & X[1]},$$
    where $X\in\mathcal{X}, Y\in\mathcal{Y}.$
\end{enumerate}
\end{Def}

The above distinguished triangle in $(2)$ is called a {\it $(\mathcal{X},\mathcal{Y})$-triangle} of $\mathcal{T}$. It is easy to show that for any $(\mathcal{X},\mathcal{Y})$-triangle of $\mathcal{T}$, $f$ is a right $\mathcal{X}$-approximation and $g$ is a left $\mathcal{Y}$-approximation. It is true that for a $(\mathcal{X},\mathcal{Y})$-triangle, $f$ is a minimal right $\mathcal{X}$-approximation if and only if $g$ is a minimal left $\mathcal{Y}$-approximation (cf. \cite[Lemma 3.2]{Dugas}). Furthermore, we can choose a right minimal version of $f$ and this resulting triangle is unique up to isomorphism, we call it the  {\it minimal $(\mathcal{X}, \mathcal{Y})$-triangle}.

In the present paper, we will apply the above torsion pair theory in a special case where $\mathcal{T}$ is the stable category $A$-$\stmod$ of a self-injective algebra $A$. In this case, the suspension functor is the cosyzygy functor
$\Omega^{-1}$ (sometimes still denoted by $[1]$ if there is no confusion) and the distinguished triangles in $A$-$\stmod$ are
induced by short exact sequences in $A$-mod (see \cite{H}). Notice that $A$-$\stmod$ has Serre functor $\nu\Omega=\nu[-1]$, that is, for all $M,N\in A$-$\stmod$, we have the natural $k$-linear isomorphisms$\colon\stHom_A(M,N) \cong D\stHom_A(N,\nu\Omega M)$, where $D=\Hom_k(-,k)$ is the usual $k$-dual functor and $\nu=D\Hom_A(-,A)$ is the Nakayama functor (see \cite{RV}). We remind the reader that the Nakayama functor defines a self-equivalence on $A$-mod (hence on $A$-$\stmod$).

Now we take an orthogonal system $\mathcal{S}$  in $A$-$\stmod$ with $\nu(\mathcal{S})=\mathcal{S}$ and assume that both $(\mathcal{^\perp S}, \mathcal{F}(\mathcal{S}))$ and $(\mathcal{F}(\mathcal{S}),\mathcal{S^{\perp}})$ are torsion pairs in $A$-$\stmod$. Let $X$ be an object in $A$-$\stmod$.  We define operators $\mathbf{a}\colon\mathcal{T}\longrightarrow \mathcal{^{\perp}S}, \mathbf{b}, \mathbf{c}\colon\mathcal{T}\longrightarrow \mathcal{F}(\mathcal{S})$ and $\mathbf{d}\colon\mathcal{T}\longrightarrow \mathcal{S^{\perp}}$ via the minimal triangles  $$
   \mathbf{a}X\longrightarrow X\longrightarrow \mathbf{b}X\longrightarrow \mbox{ and } \mathbf{c}X\longrightarrow X\longrightarrow \mathbf{d}X\longrightarrow $$
corresponding to these two torsion pairs respectively. Notice that in general these operators are not functors, see \cite[Section 3]{Dugas} for more information.

\begin{Lem}\label{Nakayaka-stable-conclusion-1}{\rm(\cite[Lemma 4.3]{Dugas})} Assume that $\mathcal{S}$ is an orthogonal system in $A$-$\stmod$ with $\nu(\mathcal{S})=\mathcal{S}$. Then $\nu(\mathcal{F}(\mathcal{S}))=\mathcal{F}(\mathcal{S})$. Furthermore,  $\nu(\mathbf{a}X)\cong\mathbf{a}(\nu X)$ and $\nu(\mathbf{b}X)\cong\mathbf{b}(\nu X)$ for all $X\in$ $A$-$\stmod$, and similarly for $\mathbf{c}$ and $\mathbf{d}$.
\end{Lem}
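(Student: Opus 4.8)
The plan is to leverage that the Nakayama functor $\nu=D\Hom_A(-,A)$ is a triangle self-equivalence of $A$-$\stmod$. Since $A$ is self-injective, $\nu$ is exact on $A$-mod and carries projectives to projectives, so it descends to an auto-equivalence of $A$-$\stmod$ that commutes with the suspension $\Omega^{-1}$ and sends distinguished triangles to distinguished triangles. Every assertion will be extracted from this fact together with the hypothesis $\nu(\mathcal{S})=\mathcal{S}$ (which also gives $\nu^{-1}(\mathcal{S})=\mathcal{S}$).

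First I would establish $\nu(\mathcal{F}(\mathcal{S}))=\mathcal{F}(\mathcal{S})$. Applying the triangle functor $\nu$ to a defining triangle $S_1\to X\to S_2\to S_1[1]$ shows $\nu(\mathcal{S}_1\ast\mathcal{S}_2)=\nu(\mathcal{S}_1)\ast\nu(\mathcal{S}_2)$ for arbitrary families, precisely because $\nu$ commutes with $[1]$. Combined with $\nu(\{0\})=\{0\}$ and $\nu(\mathcal{S})=\mathcal{S}$, a straightforward induction yields $\nu((\mathcal{S})_n)=(\mathcal{S})_n$ for every $n$, and hence $\nu(\mathcal{F}(\mathcal{S}))=\bigcup_{n}\nu((\mathcal{S})_n)=\mathcal{F}(\mathcal{S})$. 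In the same spirit I would record that $\mathcal{^{\perp}S}$ and $\mathcal{S^{\perp}}$ are $\nu$-stable: for $Y\in\mathcal{^{\perp}S}$ and $X\in\mathcal{S}$ the equivalence gives $\stHom_A(\nu Y,X)\cong\stHom_A(Y,\nu^{-1}X)=0$ since $\nu^{-1}X\in\mathcal{S}$, so $\nu(\mathcal{^{\perp}S})\subseteq\mathcal{^{\perp}S}$, and applying the argument to $\nu^{-1}$ gives equality; the identical computation gives $\nu(\mathcal{S^{\perp}})=\mathcal{S^{\perp}}$.

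For the statements about the operators I would apply $\nu$ to the minimal $(\mathcal{^{\perp}S},\mathcal{F}(\mathcal{S}))$-triangle $\mathbf{a}X\xrightarrow{f}X\to\mathbf{b}X\to$. The image $\nu(\mathbf{a}X)\xrightarrow{\nu f}\nu X\to\nu(\mathbf{b}X)\to$ is again a distinguished triangle, and by the previous paragraph its outer terms lie in $\mathcal{^{\perp}S}$ and $\mathcal{F}(\mathcal{S})$; hence it is a $(\mathcal{^{\perp}S},\mathcal{F}(\mathcal{S}))$-triangle of $\nu X$, so $\nu f$ is automatically a right $\mathcal{^{\perp}S}$-approximation. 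The decisive point is minimality: since $\nu$ is an equivalence it preserves right minimal morphisms, for if $(\nu f)g=\nu f$ then $g=\nu h$ with $\nu(fh)=\nu f$, whence $fh=f$ by faithfulness, so $h$ and therefore $g$ is an automorphism. Thus $\nu f$ is a minimal right $\mathcal{^{\perp}S}$-approximation, and by the uniqueness of the minimal torsion triangle (\cite[Lemma 3.2]{Dugas}) the triangle above is isomorphic to the minimal $(\mathcal{^{\perp}S},\mathcal{F}(\mathcal{S}))$-triangle $\mathbf{a}(\nu X)\to\nu X\to\mathbf{b}(\nu X)\to$ of $\nu X$. Comparing the outer terms gives $\nu(\mathbf{a}X)\cong\mathbf{a}(\nu X)$ and $\nu(\mathbf{b}X)\cong\mathbf{b}(\nu X)$. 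The case of $\mathbf{c}$ and $\mathbf{d}$ is verbatim the same, now using the torsion pair $(\mathcal{F}(\mathcal{S}),\mathcal{S^{\perp}})$ and the $\nu$-stability of $\mathcal{F}(\mathcal{S})$ and $\mathcal{S^{\perp}}$.

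I expect the minimality step to be the only genuine obstacle. Producing \emph{some} $(\mathcal{^{\perp}S},\mathcal{F}(\mathcal{S}))$-triangle for $\nu X$ out of the one for $X$ is immediate, but one must check that $\nu$ does not spoil right-minimality, so that the transported triangle is the canonical one computing $\mathbf{a}(\nu X)$ and $\mathbf{b}(\nu X)$ rather than a non-minimal torsion triangle differing from it by a split trivial summand. Everything else is formal manipulation of the triangle functor $\nu$ and the definitions.
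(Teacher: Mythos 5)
The paper does not prove this statement; it is quoted verbatim from \cite[Lemma 4.3]{Dugas}. Your argument is correct and is essentially the standard one underlying that citation: $\nu$ is a triangle auto-equivalence of $A$-$\stmod$, so it preserves $\ast$-products, extension closures and the perpendicular categories of the $\nu$-stable family $\mathcal{S}$, and your check that an equivalence preserves right-minimality (via fullness and faithfulness) correctly handles the only delicate point, after which uniqueness of the minimal torsion triangle (\cite[Lemma 3.2]{Dugas}) yields the isomorphisms for $\mathbf{a},\mathbf{b},\mathbf{c},\mathbf{d}$.
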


\begin{Lem}\label{Nakayaka-stable-conclusion-2}{\rm(\cite[Lemma 4.6]{Dugas})}  Let $\mathcal{S}$ be as in Lemma {\rm\ref{Nakayaka-stable-conclusion-1}}. For any minimal $(\mathcal{^{\perp}S},\mathcal{F}(\mathcal{S}))$-triangle $\mathbf{a}Y\stackrel{f}{\longrightarrow} Y\stackrel{g}{\longrightarrow} \mathbf{b}Y\longrightarrow $ and any $X\in \mathcal{S}$, we have the following.
\begin{enumerate}[$(1)$]
\item The map $\stHom_{A}(g,X)\colon\stHom_{A}(\mathbf{b}Y,X)\longrightarrow \stHom_{A}(Y,X)$ is an isomorphism.
\item  The map $\stHom_{A}(X,f)\colon \stHom_{A}(X,\mathbf{a}Y)\longrightarrow\stHom_{A}(X,Y)$ is a monomorphism.
\item If $Y\in \mathcal{S^{\perp}}$, then $\mathbf{a}Y\in \mathcal{^{\perp}S^{\perp}}$.
\end{enumerate}
\end{Lem}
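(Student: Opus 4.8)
The plan is to feed the defining triangle $\mathbf{a}Y\stackrel{f}{\to} Y\stackrel{g}{\to}\mathbf{b}Y\stackrel{h}{\to}\mathbf{a}Y[1]$ into the two cohomological functors $\stHom_A(-,X)$ and $\stHom_A(X,-)$ for $X\in\mathcal{S}$, to read the three assertions off the resulting long exact sequences, and to exploit four inputs throughout: that $\mathbf{a}Y\in{}^{\perp}\mathcal{S}$, that $\mathbf{b}Y\in\mathcal{F}(\mathcal{S})$, the (co)approximation properties of $f$ and $g$, and Serre duality via $\mathbb{S}=\nu\Omega=\nu[-1]$ together with $\nu(\mathcal{S})=\mathcal{S}$ and Lemma \ref{Nakayaka-stable-conclusion-1}.

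First I would dispatch the two directions that are purely formal. Applying $\stHom_A(-,X)$ gives
$$\stHom_A(\mathbf{a}Y[1],X)\xrightarrow{h^{*}}\stHom_A(\mathbf{b}Y,X)\xrightarrow{g^{*}}\stHom_A(Y,X)\xrightarrow{f^{*}}\stHom_A(\mathbf{a}Y,X),$$
and since $\mathbf{a}Y\in{}^{\perp}\mathcal{S}$ the last term vanishes, whence $g^{*}=\stHom_A(g,X)$ is surjective; equivalently, this is just the statement that $g$ is a left $\mathcal{F}(\mathcal{S})$-approximation and $X\in\mathcal{S}\subseteq\mathcal{F}(\mathcal{S})$. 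This is the surjectivity half of $(1)$. For $(3)$, observe that $(2)$ provides a monomorphism $\stHom_A(X,\mathbf{a}Y)\hookrightarrow\stHom_A(X,Y)$; if $Y\in\mathcal{S}^{\perp}$ the target is $0$, so $\stHom_A(X,\mathbf{a}Y)=0$ for every $X\in\mathcal{S}$, i.e. $\mathbf{a}Y\in\mathcal{S}^{\perp}$, and together with $\mathbf{a}Y\in{}^{\perp}\mathcal{S}$ this is exactly $\mathbf{a}Y\in{}^{\perp}\mathcal{S}^{\perp}$. Thus $(3)$ is a free consequence of $(2)$.

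The remaining content is the injectivity of $g^{*}$ in $(1)$ and the monomorphism $(2)$, and I would show these are two faces of one statement. From the sequence above, $g^{*}$ is injective precisely when $h^{*}=0$; dualizing by the Serre functor $\mathbb{S}=\nu[-1]$ and invoking Lemma \ref{Nakayaka-stable-conclusion-1} (so that $\nu$ carries the minimal triangle of $Y$ to that of $\nu Y$, with $\nu\mathbf{a}Y\cong\mathbf{a}\nu Y$ and $\nu\mathbf{b}Y\cong\mathbf{b}\nu Y$) together with $\nu(\mathcal{S})=\mathcal{S}$, the map $h^{*}$ for $Y$ is identified with the connecting map for $\nu Y$ produced by $\stHom_A(X,-)$, whose vanishing is exactly assertion $(2)$ for $\nu Y$. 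Since $\nu$ is a self-equivalence, $(1)$-injectivity for all objects is equivalent to $(2)$ for all objects. Hence everything reduces to a single vanishing: for every $X\in\mathcal{S}$ the connecting map $\stHom_A(X[1],\mathbf{b}Y)\to\stHom_A(X,\mathbf{a}Y)$ is zero, equivalently no nonzero morphism $\psi\colon X\to\mathbf{a}Y$ is annihilated by $f$.

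This last vanishing is the heart of the matter, and the step I expect to be the main obstacle. It cannot come from $\mathrm{Hom}$/$\mathrm{Ext}$-vanishing alone: because $\mathcal{S}$ is $\nu$-stable one checks that ${}^{\perp}\mathcal{S}$ is \emph{not} closed under the shift $[1]$, so $\mathbf{a}Y[1]\notin{}^{\perp}\mathcal{S}$ in general and $\stHom_A(\mathbf{a}Y[1],X)$ need not be zero. Instead I would use the minimality of the triangle. A morphism $\psi\colon X\to\mathbf{a}Y$ with $f\psi=0$ factors as $\psi=f'\rho$ through the connecting morphism $f'\colon\Omega\mathbf{b}Y\to\mathbf{a}Y$, and I would try to rule out a nonzero such $\psi$ by inducting on the length of an $\mathcal{S}$-filtration of $\mathbf{b}Y$ in $\mathcal{F}(\mathcal{S})$: peel off a triangle $S\to\mathbf{b}Y\to B'\to$ with $S\in\mathcal{S}$ and $B'\in\mathcal{F}(\mathcal{S})$ of smaller length, and use that right-minimality of $f$ (equivalently left-minimality of $g$) forces $f'$ and $h$ to be radical, so that no filtration layer can contribute a split copy of $X$ to $\stHom_A(X,\mathbf{a}Y)$. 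Closing this induction cleanly---that is, controlling precisely how the minimal approximation interacts with each layer, perhaps by bringing in the companion torsion pair $(\mathcal{F}(\mathcal{S}),\mathcal{S}^{\perp})$ and an octahedral comparison of the two approximation triangles---is the delicate point; once this vanishing is secured, the rest of the lemma is formal bookkeeping with the long exact sequences.
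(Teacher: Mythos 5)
Your reductions are all sound: the surjectivity in $(1)$ is immediate from $\mathbf{a}Y\in{}^{\perp}\mathcal{S}$ (equivalently from $g$ being a left $\mathcal{F}(\mathcal{S})$-approximation); $(3)$ does follow formally from $(2)$ together with $\mathbf{a}Y\in{}^{\perp}\mathcal{S}$; and the Serre-duality argument identifying the injectivity in $(1)$ for $Y$ with the monomorphism statement $(2)$ for $\nu Y$ (via $\mathbb{S}=\nu[-1]$, $\nu(\mathcal{S})=\mathcal{S}$ and Lemma \ref{Nakayaka-stable-conclusion-1}) is correct. But after these reductions the entire content of the lemma is concentrated in the one vanishing you defer --- that no nonzero $\psi\colon X\to\mathbf{a}Y$ satisfies $f\psi=0$ --- and your proposal does not prove it. What you offer there is a strategy, not an argument: ``right-minimality forces $h$ to be radical, so no filtration layer can contribute a split copy of $X$'' does not close the gap, because radicality of $h$ only excludes split components of $h$ itself, whereas what must be excluded is a nonzero \emph{composite} $X\to\Omega\mathbf{b}Y\xrightarrow{\Omega h}\mathbf{a}Y$, and such a composite need not involve anything split. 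Moreover the induction is not actually set up: you do not say what the inductive hypothesis is for a shorter filtration, nor how the minimal $({}^{\perp}\mathcal{S},\mathcal{F}(\mathcal{S}))$-triangle of $Y$ interacts with the approximation triangles of the truncated pieces; the parenthetical appeal to ``an octahedral comparison'' is exactly the step that would have to be carried out. You yourself flag this as ``the delicate point,'' and since everything else in the lemma is formal bookkeeping, the proof as written is incomplete. (Your instinct that minimality is indispensable is right --- for a non-minimal triangle one can enlarge $\mathbf{b}Y$ by a summand on which $g$ vanishes and injectivity in $(1)$ fails --- which is further evidence that the missing step is where the real work lives.)

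For comparison: the paper offers no proof of this statement at all; it is imported verbatim as \cite[Lemma 4.6]{Dugas}, and the argument you are trying to reconstruct lives in Section 4 of that paper, where the Nakayama-stability of $\mathcal{S}$ and Lemma \ref{Nakayaka-stable-conclusion-1} are used in essentially the way you anticipate, but the crucial vanishing is actually established rather than asserted. So your write-up cannot stand on its own as a replacement for the citation; either complete the induction/octahedron argument in detail or cite Dugas for the hard direction as the paper does.
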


\subsection{Covering theory}

 The covering of translation quivers was introduced by Riedtmann (\cite{Riedtmann1}), and it was extended to covering functors between $k$-categories by Bongartz and Gabriel (\cite{BG}). We refer to a brief introduction on some covering theory from \cite{CKL}.

Following Asashiba \cite{A1}, we abbreviate (indecomposable, basic) representation-finite self-injective algebra ($\ncong k$) over an algebraically closed field $k$ by RFS algebra. Let $A$ be an RFS algebra, and let $_{s}\Gamma_{A}$ be the stable Auslander-Reiten quiver of $A$. It is known that $_s\Gamma_A$ has the form
$\mathbb{Z}\Delta/\langle\sigma\tau^{-r}\rangle$,
where $\Delta$ is a Dynkin quiver, $\mathbb{Z}\Delta$ is the stable translation quiver associated to $\Delta$, $\tau$ is the translation of $\mathbb{Z}\Delta$ and $\sigma$ is some automorphism of the quiver $\mathbb{Z}\Delta$ with a fixed vertex. Notice that $\tau$ coincides with the AR-translate $DTr$.
According to \cite{A2}, the  {\it type} $tpy(A)$ of an RFS algebra $A$ is defined by $tpy(A):=(\Delta,f,t)$, where $f:=r/m_{\Delta}$ and $t$ is the order of $\sigma$. Here $m_{\Delta}=n,2n-3,11,17$ or $29$ for  $\Delta=A_{n},D_{n},E_{6},E_{7}$ or $E_{8}$,
respectively. Notice that if $n$ is the number of vertices of $\Delta$, then $nf$ is the number of isoclasses of simple $A$-modules.
We remark that $m_{\Delta}$ has the following categorical interpretation (cf. \cite[Section 1.1]{BLR})$\colon$any path of length greater than or equal to $m_{\triangle}$ is zero in the mesh category $k(\mathbb{Z}\Delta)$.

 \begin{Prop}\label{RFS-algebra} {\rm(\cite[Proposition 1.1]{A2})} The set of all types of representation-finite self-injective algebras {\rm($\ncong k$)} is equal to the disjoint union of the following sets.

\begin{enumerate}[$(a)$]
\item $\{(A_{n},s/n,1)\mid n,s\in\mathbb{N}\}$.
\item $\{(A_{2p+1},s,2)\mid p,s\in\mathbb{N}\}$.
\item $\{(D_{n},s,1)\mid n,s\in\mathbb{N},n\geq4\}$.
\item $\{(D_{3m},s/3,1)\mid m,s\in\mathbb{N}, m\geq2,3\nmid s\}$.
\item $\{(D_{n},s,2)\mid n,s\in\mathbb{N},n\geq4\}$.
\item $\{(D_{4},s,3)\mid s\in\mathbb{N}\}$.
\item $\{(E_{n},s,1)\mid n=6,7,8, s\in\mathbb{N}\}$.
\item $\{(E_{6},s,2)\mid s\in\mathbb{N}\}$.
\end{enumerate}
\end{Prop}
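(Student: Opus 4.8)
The statement is a classification result, so the plan is structural rather than computational. The starting point is Riedtmann's structure theorem, already invoked in Subsection 2.2: for an RFS algebra $A$ the stable Auslander--Reiten quiver is ${}_{s}\Gamma_{A}\cong\mathbb{Z}\Delta/\langle g\rangle$, where $\Delta$ is a Dynkin quiver and $\langle g\rangle$ is an admissible infinite cyclic group of automorphisms of the stable translation quiver $\mathbb{Z}\Delta$. Since the type $tpy(A)=(\Delta,f,t)$ is by construction an invariant of the pair $(\Delta,\langle g\rangle)$ up to conjugacy in $\mathrm{Aut}(\mathbb{Z}\Delta)$, the proposition reduces to the purely combinatorial problem of listing, for each Dynkin type, the conjugacy classes of admissible cyclic subgroups $\langle g\rangle\leq\mathrm{Aut}(\mathbb{Z}\Delta)$ and reading off the triple.

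First I would describe $\mathrm{Aut}(\mathbb{Z}\Delta)$. The translation $\tau$ generates a central infinite cyclic subgroup, and every automorphism becomes, after composing with a suitable power of $\tau$, one induced by a graph automorphism of $\Delta$ fixing a chosen vertex of a slice; hence $g$ can always be normalized to the form $g=\sigma\tau^{-r}$ with $\sigma$ rigid of finite order $t$ and $r\in\mathbb{Z}$. The order $t$ is then exactly the order of the underlying diagram automorphism, so the admissible values of $t$ are forced by $\mathrm{Aut}(\Delta)$: it is trivial for $E_{7}$ and $E_{8}$ (whence $t=1$ only), it is $\mathbb{Z}/2$ for $A_{n}$ $(n\geq2)$, $D_{n}$ $(n\geq5)$ and $E_{6}$ (whence $t\in\{1,2\}$), and it is $S_{3}$ for $D_{4}$ (whence $t\in\{1,2,3\}$). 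This alone explains the coarse shape of the list: why $t=2$ occurs for types $A$, $D$ and $E_{6}$, and why $t=3$ occurs only for $D_{4}$.

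Next I would impose admissibility. Finiteness of the orbit quiver forces $r\geq1$, and a short count (using $g^{t}=\tau^{-rt}$ together with free action of the residual $\mathbb{Z}/t$) shows $\mathbb{Z}\Delta/\langle g\rangle$ then has $rn$ vertices, where $n=|\Delta_{0}|$; combined with the stated identity that the number of simple $A$-modules equals $nf$, this fixes the normalization $f=r/m_{\Delta}$ and gives that the number of simples is $rn/m_{\Delta}$. The substantive part is to decide, for each $(\sigma,r)$, when $g=\sigma\tau^{-r}$ acts \emph{freely} on vertices and the quotient admits a Riedtmann (combinatorial) configuration, since exactly these quotients are realized by self-injective algebras; this is what pins down the divisibility conditions on $f$. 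For type $A_{n}$ every $r\geq1$ works (with $\sigma=\mathrm{id}$, $\tau^{-r}$ acts freely and a configuration always exists), producing all fractional frequencies $f=s/n$. For $D_{n}$ and $E_{n}$ the configurations generically have $\tau$-period a multiple of $m_{\Delta}$, forcing $f$ integral, the sole exception being the configurations in $\mathbb{Z}D_{3m}$ whose period is not a multiple of $m_{\Delta}$, which yield the family $(D_{3m},s/3,1)$; here the constraint $3\nmid s$ is exactly what keeps this family disjoint from the integral family $(D_{n},s,1)$, since $3\mid s$ would return an already-listed integral frequency.

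The main obstacle is precisely this last step: proving that the joint conditions ``$g$ acts freely'' and ``a configuration exists'' cut out exactly the eight families $(a)$--$(h)$ and nothing more. The bookkeeping for the generic integral cases is routine, but the exceptional and fractional cases---$D_{3m}$ with $3\nmid s$, $D_{4}$ with $t=3$, and $A_{2p+1}$ with $t=2$---demand a careful analysis of how $\sigma$ interacts with the mesh relations of $k(\mathbb{Z}\Delta)$ and of the explicit existence and non-existence of configurations in each quotient. Establishing both existence inside the list and non-existence outside it is the technical heart of the argument, and is exactly the combinatorial classification carried out by Riedtmann and refined by Asashiba.
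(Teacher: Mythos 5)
The paper does not prove this proposition at all: it is quoted verbatim from Asashiba (\cite[Proposition 1.1]{A2}), which in turn rests on Riedtmann's classification, so there is no in-paper argument to compare against. Judged as a standalone proof, your proposal correctly identifies the right strategy (reduce to classifying admissible cyclic subgroups $\langle\sigma\tau^{-r}\rangle$ of $\mathrm{Aut}(\mathbb{Z}\Delta)$ up to conjugacy, then decide which orbit quivers carry a configuration), but it has a genuine gap: the two decisive steps are exactly the ones you defer. You say the ``bookkeeping for the generic integral cases is routine'' and that the rest ``is exactly the combinatorial classification carried out by Riedtmann and refined by Asashiba'' --- but that classification \emph{is} the statement to be proved, so the argument is circular as written.

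Two concrete places where the sketch, taken at face value, would go wrong. First, your claim that the admissible orders $t$ are ``forced by $\mathrm{Aut}(\Delta)$'' over-generates: it predicts a family $(A_{2p},s,2)$, which is absent from the list. For $\mathbb{Z}A_{n}$ with $n$ even, any automorphism inducing the nontrivial diagram automorphism satisfies $\rho^{2}=\tau^{-(n+1-2c)}\neq\mathrm{id}$ for every normalization $c$, so no such generator has $t=2$ in the sense of the type invariant; ruling this out requires an explicit computation in $\mathrm{Aut}(\mathbb{Z}A_{n})$ that you do not perform, and the same care is needed to see that $t=3$ survives only for $D_{4}$ and $t=2$ for $E_{6}$ but not $E_{7},E_{8}$. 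Second, the proposition asserts not only that nothing outside the eight families occurs but that \emph{every} listed triple is realized (every $s\in\mathbb{N}$ in each family, every $m\geq2$ with $3\nmid s$ in $(d)$); this existence half needs the construction of a configuration, equivalently of an actual self-injective algebra, for each parameter value, and your sketch offers no mechanism for it beyond citing the sources. Since the proposition is imported rather than proved in this paper, the honest conclusion is that your outline is a faithful road map of the literature's proof, not a proof.
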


Recall from \cite{BG} and \cite{BLR} that a representation-finite $k$-algebra is called {\it standard} if $A$-$\ind\cong k(\Gamma_{A})$, where $k(\Gamma_{A})$ is the mesh category of the Auslander-Reiten quiver $\Gamma_{A}$ of $A$ and $A$-$\ind$ is the full subcategory of $A$-mod whose objects are the representatives of the isoclasses of indecomposable modules. Non-standard algebras are algebras which are not standard.

\begin{Rem} \label{standard-non-standard-pair} \rm(cf. \cite{A1,A2} and \cite[Section 4]{CKL}) Standard RFS algebras appear in all types and non-standard RFS algebras appear only in type $(D_{3m},1/3,1)$ for some $m\geq2$. For every non-standard RFS algebra $A$, there is a standard RFS algebra of the same type, which is denoted by $A_s$ and called the standard counterpart of $A$. The RFS algebras which correspond to symmetric algebras are of types $\{(A_{n},s/n,1)\mid s\in\mathbb{N},s|n\}$, $\{(D_{3m},1/3,1)\}$, $\{(D_{n},1,1)\mid n\in\mathbb{N},n\geq4\}$, $\{(E_{n},1,1)\mid n=6,7,8\}$.
\end{Rem}

Recall from \cite{Riedtmann4,A1} that if $A$ is standard, then we have that $A$-$\stind\cong k(_{s}\Gamma_{A})$, where $k(_{s}\Gamma_{A})$ is the mesh category of the stable Auslander-Reiten quiver $_{s}\Gamma_{A}$ and $A$-$\stind$ is the full subcategory of  $A$-$\stmod$ whose objects are objects in $A$-$\ind$. Moreover, there is a covering functor $F\colon
k(\mathbb{Z}\Delta)\longrightarrow$ $A$-$\stmod$. In particular, for $e,f,h\in \mathbb{Z}\Delta$, there are the following bijections$\colon$
\begin{align}
\bigoplus_{Fh=Ff}\Hom_{k(\mathbb{Z}\Delta)}(e,h)\cong
\Hom_{k(_{s}\Gamma_{A})}(Fe,Ff),\notag
\bigoplus_{Fh=Ff}\Hom_{k(\mathbb{Z}\Delta)}(e,h)\cong \stHom_{A}(Fe,Ff),\notag\\
\bigoplus_{Fe=Fh}\Hom_{k(\mathbb{Z}\Delta)}(e,f)\cong
\Hom_{k(_{s}\Gamma_{A})}(Fh,Ff),\notag \bigoplus_{Fe=Fh}\Hom_{k(\mathbb{Z}\Delta)}(e,f)\cong \stHom_{A}(Fh,Ff) \notag.
\end{align}

In the following two lemmas, we recall the well-known properties on homomorphism spaces in the mesh category of the stable translation quiver $\mathbb{Z}\Delta$, where $\Delta=A_n$ or $\Delta=D_n$. We use the following enumeration on the vertices of $\Delta\colon$
	
	$$\xymatrix{
		1 \ar[r] & 2\ar[r]  & \cdots  \ar[r] & n& (A_{n}),}$$
\ \ \ \ \ \ \ \ \ \ \ \ \ \ \ \ \ \ \ \ \ \ \ \ \ \ \ \ \ \ \ \ \ \ \ \ \ \ \ \ \ \ \ \  \ \ \ \ \ \ \ \ \ \ \ \ \ \ \ \ \ \ \ \ \ \ \ \ \ \ \ \ \ \ \ \ \ \ \ \ \ \ \ \ \ \ \ \ \ \  \ \ \ \ \ \ \ \ \ \ \ \ \ \ \ \ \ \ \ \ \ \ \ \ \ \ \ \ \ \ \ \ \ \ \ \ \ \ \ \ \ \ \ \  (2.1)
	$$\xymatrix{
		& & &n & \\
		1 \ar[r] & 2\ar[r]  & \cdots  \ar[r] & n-2 \ar[u] \ar[r] &n-1 & (D_{n}).
	}$$
It is often convenient to write a vertex of $\mathbb{Z}\Delta$ as its coordinate $(p,q)$, where $p, q$ are integers, $1\leq q\leq n$ and $n$ is the number of vertices of $\Delta$.

 \begin{Lem}\label{Riedtmann-lemma}{\rm(\cite[Lemma 2.6.1]{Riedtmann2})} For any vertices $(p,q)$ and $(r,s)$ in $\mathbb{Z}A_{\ell}$, we have $$dim_{k}(\Hom_{k(\mathbb{Z}A_{\ell})}((p,q),(r,s)))\leq1.$$
In particular, $dim_{k}(\Hom_{k(\mathbb{Z}A_{\ell})}((p,q),(r,s)))=1$ if and only if $p\leq r< p+q\leq r+s\leq p+\ell.$
\end{Lem}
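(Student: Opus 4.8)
The plan is to compute directly inside the mesh category $k(\mathbb{Z}A_{\ell})$, using the explicit shape of the translation quiver $\mathbb{Z}A_{\ell}$ and its mesh relations, and to organize the argument by induction on the horizontal distance $r-p$. First I would fix the combinatorial data. With the orientation $1\to 2\to\cdots\to\ell$ the vertices are the pairs $(p,q)$ with $p\in\mathbb{Z}$ and $1\leq q\leq\ell$, the translation is $\tau(p,q)=(p-1,q)$, and for $1\leq q\leq\ell-1$ there are two families of arrows, the \emph{up} arrows $(p,q)\to(p,q+1)$ and the \emph{down-right} arrows $(p,q+1)\to(p+1,q)$; the boundary levels $q=1$ and $q=\ell$ carry only one arrow of each type. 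A morphism in $k(\mathbb{Z}A_{\ell})$ is a $k$-combination of path classes, and every path from $(p,q)$ to $(r,s)$ traces a monotone staircase in this lattice, so a nonzero path can exist only when $(r,s)$ lies weakly to the right of $(p,q)$ and inside its forward cone. The conceptual guide is that, after identifying $(p,q)$ with the interval module supported on $\{p+1,\dots,p+q\}$, the space $\Hom_{k(\mathbb{Z}A_{\ell})}((p,q),(r,s))$ is exactly the (at most one-dimensional) space of homomorphisms between interval modules, nonzero precisely for an overlap pattern; the point of the proof is to extract this from the meshes.

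Next I would establish the bound $\dim_{k}\leq 1$ by showing that any two paths from $(p,q)$ to $(r,s)$ agree up to sign modulo the mesh ideal. The key local move is that at each interior vertex the mesh relation is a signed commutative square: the two routes through the two intermediate neighbours differ only by a sign. Since any monotone staircase can be carried to any other monotone staircase with the same endpoints by a finite sequence of such elementary square moves, all paths between the two fixed vertices represent the same element of $k(\mathbb{Z}A_{\ell})$ up to a scalar. Hence $\dim_{k}\Hom_{k(\mathbb{Z}A_{\ell})}((p,q),(r,s))\leq 1$, with equality exactly when at least one path survives, i.e.\ is not forced to zero by a boundary mesh relation.

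Finally I would identify the survival region. Here the induction on $r-p$ does the work: the base case $r=p$ is the single vertical fibre of up-arrows, and the inductive step applies the mesh relation at the target $(r,s)$ to reduce to a strictly smaller horizontal distance, matching the recursive shape of the hammock of $(p,q)$. A staircase is killed precisely when it is forced through the degenerate meshes along $q=1$, along $q=\ell$, or through the $\tau$-mesh closing up on the left; translating these obstructions into inequalities yields monotonicity $p\leq r$, the two forward-cone conditions $r<p+q$ and $p+q\leq r+s$ (which already hold in the half-plane $\mathbb{Z}A_{\infty}$ and so are ``free''), and the genuinely finite-type constraint $r+s\leq p+\ell$ coming from the top boundary $q=\ell$. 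The main obstacle will be precisely this survival bookkeeping: one must check that the mesh relations annihilate a path exactly on the boundary of the stated region and not one step early, which means tracking how each of the four inequalities degrades by a single unit at each application of a mesh relation and confirming that the two thin directions characteristic of type $A$ prevent any cancellation from ever inflating the Hom space beyond dimension one.
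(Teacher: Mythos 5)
The paper does not prove this lemma at all: it is imported verbatim, with a citation, from \cite[Lemma 2.6.1]{Riedtmann2}, so there is no in-paper proof to compare yours against. Measured against the classical argument, your strategy is the right one and is essentially Riedtmann's: the interior mesh relations are signed commuting squares, any two monotone staircases with the same endpoints are connected by square flips, hence all paths from $(p,q)$ to $(r,s)$ coincide up to sign and $\dim_k\leq 1$; the degenerate meshes at the levels $q=1$ and $q=\ell$ then annihilate the paths leaving the stated region, and you match the four inequalities to the correct obstructions ($p\leq r$ and $p+q\leq r+s$ from path monotonicity, $r<p+q$ from the bottom boundary, $r+s\leq p+\ell$ from the top boundary).

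The genuine gap is the equality half of the statement, i.e.\ nonvanishing. Your bookkeeping only ever shows that a path is zero \emph{when} it is forced through a degenerate mesh; it does not show that the surviving path class is nonzero when the inequalities $p\leq r<p+q\leq r+s\leq p+\ell$ hold. A priori, an element of the two-sided ideal generated by the mesh relations, composed on either side with other morphisms, could kill a path that never visits a boundary mesh, and ``tracking how each inequality degrades by one unit per mesh relation'' does not exclude this. The standard way to close the gap is to produce a witness: for each source vertex $(p,q)$, an explicit representation of $k(\mathbb{Z}A_{\ell})$ (the hammock of $(p,q)$) whose dimension vector is the indicator function of the region $\{(r,s)\colon p\leq r<p+q\leq r+s\leq p+\ell\}$ and on which every surviving path acts as the identity of $k$; equivalently, one invokes the identification of $k(\mathbb{Z}A_{\ell})$ with the indecomposables of $D^b(kA_{\ell})$ and computes $\Hom$ between shifted interval modules directly. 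You mention this interval-module picture as a ``conceptual guide'' but then set it aside, and your induction on $r-p$ has no base-case anchor without such a witness; supplying it is the one missing ingredient that turns your plan into a proof.
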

\begin{center}
	\vspace{-2cm}
	\setlength{\unitlength}{0.4cm}
	\begin{picture}(15,15)
	\put(0,0){\line(1,0){20}}
	\put(0,8){\line(1,0){20}}

    \put(7.18,3.67){\line(1,4){0.96}}
    \put(7.78,3.21){\line(1,4){0.96}}
    \put(8.4,2.68){\line(1,4){0.97}}
    \put(9,2.28){\line(1,4){0.97}}
    \put(9.6,1.78){\line(1,4){0.97}}
    \put(10.22,1.34){\line(1,4){0.97}}
    \put(10.78,0.88){\line(1,4){0.97}}
    \put(11.35,0.45){\line(1,4){0.97}}

    \put(7.5,8){\line(4,-3){5.47}}
    \put(6.5,4.1){\line(1,4){0.97}}
	\put(12,0){\line(1,4){0.97}}
    \put(12,0){\line(-4,3){5.47}}
	\put(11.5,-1.1){$(r,1)$} \put(13,4){$(r,s)$} \put(6,8.3){$(r+s-\ell,\ell)$}
	\put(-1.8,4.1){$(r+s-\ell,\ell-s+1)$}
	\end{picture}
\end{center}
\vspace{0.8cm}

Given a category $\mathcal{C}$ and a functor
$F\colon\mathcal{C}\longrightarrow$ $k$-mod, we set Supp$(F):=\{X\in
\mathcal{C} \mid X \mbox{ is indecomposable and }\\ F(X)\neq0\}$.
According to \cite[Section 2]{Riedtmann3}, for the vertices of $\mathbb{Z}D_{n}$, we call a vertex $(p,q)$ {\it low}, if $q\leq n-2$, and {\it high}, otherwise. Notice that these terminologies are still valid for Auslander-Reiten quivers of type $\{(D_{n},s,1)\mid n,s\in\mathbb{N},n\geq4\}$ and type $\{(D_{3m},s/3,1)\mid m,s\in\mathbb{N}, m\geq2,3\nmid s\}$.

 \begin{Lem}\label{support-of-Dn}{\rm(\cite[Proposition 2.1]{Riedtmann3})} Let $(p,q)$ be a vertex of $\mathbb{Z}D_{n}$.

\begin{enumerate}[$(a)$]
\item If $(p,q)$ is low, we have
$\Supp(\Hom_{k(\mathbb{Z}D_{n})}((p,q),-))=\{(x,y)\colon p\leq x\leq p+q-1<x+y\}\cup\{(x,y)\colon x<p+n-1\leq x+min\{y,n-1\}\leq p+q+n-2\}$.
\item If $(p,q)$ is high, we have
 $\Supp(\Hom_{k(\mathbb{Z}D_{n})}((p,q),-))=\{(x,y)\colon y\leq n-2, x\leq p+n-2<x+y\}\cup\{(x,y)\colon y\geq n-1, p\leq x \leq p+n-2, x+y\equiv p+q$ {\rm mod} $2\}$.
\end{enumerate}

 \begin{small}
\begin{center}
\vspace{-0.8cm}
\setlength{\unitlength}{0.4cm}
\begin{picture}(15,14)
 \put(-3,0){\line(1,0){23}}
\put(-3,10){\line(1,0){23}}

\put(2.1,4.76){\line(3,5){3.14}}
\put(2.62,4.06){\line(3,5){3.55}}
\put(3.12,3.16){\line(3,5){4.11}}
\put(3.62,2.36){\line(3,5){4.6}}
\put(4.1,1.6){\line(3,5){5.05}}
\put(4.53,0.86){\line(3,5){5.48}}

\put(8.1,3.5){\line(3,5){3.375}}
\put(8.62,2.72){\line(3,5){3.375}}
\put(9.15,1.83){\line(3,5){3.375}}
\put(9.65,1){\line(3,5){3.375}}

\put(5,0){\line(3,5){6}}
\put(5,0){\line(-3,5){3.41}}
\put(4.2,10){\line(-3,-5){2.58}}
\put(4.2,10){\line(3,-5){6}}
\put(10.15,0){\line(3,5){3.45}}
\put(11,10){\line(3,-5){2.57}}
\put(13.6,5.76){$(p+n-2,q)$}
 \put(9,10.3){$(p+q-1,n-1)$}\put(2,10.3){$(p,n-1)$}
\put(1.5,-0.8){$(p+q-1,1)$} \put(9.5,-0.8){$(p+n-2,1)$}
\put(-0.4,5.4){$(p,q)$}
\end{picture}
\end{center}
\end{small}
\begin{small}
\begin{center}
\vspace{-0.6cm}
\setlength{\unitlength}{0.35cm}
\begin{picture}(15,14)
 \put(-3,0){\line(1,0){23}}
\put(-3,10){\line(1,0){23}}
\put(3.82,8.6){\line(3,5){0.856}}
\put(4.36,7.82){\line(3,5){1.32}}
\put(4.89,6.94){\line(3,5){1.8}}
\put(5.4,6.10){\line(3,5){2.33}}
\put(5.86,5.20){\line(3,5){2.87}}
\put(6.35,4.44){\line(3,5){3.32}}
\put(6.86,3.66){\line(3,5){3.78}}
\put(7.34,2.8){\line(3,5){4.3}}
\put(7.9,1.82){\line(3,5){4.9}}
\put(8.45,1){\line(3,5){5.4}}
\put(9,0){\line(3,5){6}}
\put(9,0){\line(-3,5){6}}
\put(14.4,9){\line(1,0){1}}
\put(13,10.3){$(p+n-2 ,n-1)$} \put(0.8,10.3){$(p,n-1)$}
\put(6.5,-0.8){$(p+n-2,1)$} \put(15.8,8.8){$(p+n-2,n)$}
\end{picture}
\end{center}
\end{small}
\end{Lem}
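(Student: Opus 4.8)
The plan is to compute the hammock function $d(x,y):=\dim_k \Hom_{k(\mathbb{Z}D_n)}((p,q),(x,y))$ explicitly and read off its support. The key input is that the representable functor $\Hom_{k(\mathbb{Z}D_n)}((p,q),-)$ is additive with respect to the meshes of $\mathbb{Z}D_n$: this is a standard property of mesh categories of Dynkin type, amounting to the fact that each representable functor has a projective presentation given by the meshes. It translates into the recursion $d(z)=\sum_{w\to z}d(w)-d(\tau z)+\delta_{z,(p,q)}$, the sum being over all arrows $w\to z$, together with the boundary condition that $d$ vanishes on every vertex admitting no path from $(p,q)$ (in particular on the whole past of $(p,q)$). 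Since $D_n$ is Dynkin the forward cone of $(p,q)$ is finite, so these data determine $d$ uniquely, and knitting it out starting from $d(p,q)=1$ will produce the two regions described in $(a)$ and $(b)$.

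First I would knit the purely low part of the cone, where the second coordinate ranges over the linear branch $1,\dots,n-2$. There $\mathbb{Z}D_n$ looks locally like $\mathbb{Z}A_{n-2}$, the recursion is the type-$A$ one, and Lemma \ref{Riedtmann-lemma} already gives the triangular region: this is the first set of $(a)$, respectively the truncated low set $\{(x,y):y\le n-2,\ x\le p+n-2<x+y\}$ of $(b)$. The work then concentrates at the fork. The branch vertex $y=n-2$ has the two leaves $y=n-1,n$ as successors, so its mesh has three neighbours and the knitting splits into the high region. Along the $\tau$-direction the two leaves alternate, and tracking this alternation is exactly what forces the parity constraint $x+y\equiv p+q\pmod 2$ on the high part of $(b)$; the same alternation controls how the hammock re-enters the low branch after crossing the fork, yielding the second set of $(a)$ with its truncation $x+\min\{y,n-1\}\le p+q+n-2$.

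The main obstacle, where all the bookkeeping lives, is precisely this fork: there the additive recursion couples the linear branch to both leaves at once, and one must check $(i)$ that the reached high vertices are exactly those of the correct parity, and $(ii)$ that the returning low part dies off at the stated bound and not before. Both are finite inductions along the $\tau$-orbits around $y=n-2,n-1,n$, and the mesh relation for $D_n$, namely that every path of length $\ge m_{D_n}=2n-3$ is zero in $k(\mathbb{Z}D_n)$, provides the clean cutoff terminating the knitting. For $(b)$ one repeats the knitting with the source placed at a high vertex; since the two leaves play symmetric roles it suffices to treat $q=n-1$, the case $q=n$ following by swapping $n-1$ and $n$.
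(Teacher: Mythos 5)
This lemma is not proved in the paper at all: it is quoted verbatim from Riedtmann, \cite[Proposition 2.1]{Riedtmann3}, so there is no in-paper argument to compare yours against. Your strategy --- compute the hammock function $d(x,y)=\dim_k\Hom_{k(\mathbb{Z}D_n)}((p,q),(x,y))$ by knitting the additive recursion $d(z)=\sum_{w\to z}d(w)-d(\tau z)+\delta_{z,(p,q)}$ forward through the finite cone of $(p,q)$ --- is the standard one and is essentially how Riedtmann's original proof proceeds, so in spirit you are reconstructing the cited source rather than diverging from the paper. Two caveats keep this from being a complete proof as written. First, the exactness of the mesh sequence $0\to H(\tau z)\to\bigoplus_{w\to z}H(w)\to H(z)\to 0$ for the representable functor $H=\Hom_{k(\mathbb{Z}D_n)}((p,q),-)$ at $z\neq(p,q)$ is itself a nontrivial input (equivalently, that the meshes give almost split sequences in the relevant functor category, or that $k(\mathbb{Z}\Delta)\simeq D^b(k\Delta)$-$\ind$ and one may invoke AR theory there); you should either cite it precisely or prove it, since the whole recursion rests on it.

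Second, and more seriously, you explicitly defer ``all the bookkeeping'' at the fork, and that bookkeeping is exactly where the content of the lemma lives: the parity condition $x+y\equiv p+q \pmod 2$ in $(b)$, the precise truncation $x+\min\{y,n-1\}\leq p+q+n-2$ in $(a)$, and the fact that for a low source the high part of the hammock carries \emph{no} parity restriction (because contributions through both leaves accumulate) all emerge only from carrying out the induction along the $\tau$-orbits of the vertices $n-2$, $n-1$, $n$. Announcing that ``a finite induction'' will produce these formulas is not the same as producing them; a reader cannot check the stated inequalities from your text. Also, using the vanishing of paths of length $\geq 2n-3$ as the ``cutoff'' is mildly circular in this setting --- that vanishing is usually deduced from the hammock computation (or from Serre duality in $D^b(kD_n)$), and in any case the knitting terminates on its own once $d$ reaches $0$, so you should not need it as an external input. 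The reduction of $(b)$ to $q=n-1$ via the leaf-swapping automorphism is fine and consistent with the parity statement.
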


\begin{Rem}
\begin{enumerate}[$(1)$]
\item According to \cite[Proposition 2.1]{Riedtmann3}, we have the following.

$$\Supp(\Hom_{k(\mathbb{Z}D_{n})}(-,(p+n-2,q)))=\Supp(\Hom_{k(\mathbb{Z}D_{n})}((p,q),-)),$$
$$\Supp(\Hom_{k(\mathbb{Z}D_{n})}(-,(p+n-2 ,n-1)))=\Supp(\Hom_{k(\mathbb{Z}D_{n})}((p,n-1),-))\mbox{ for }n\mbox{ even},$$
$$\Supp(\Hom_{k(\mathbb{Z}D_{n})}(-,(p+n-2 ,n)))=\Supp(\Hom_{k(\mathbb{Z}D_{n})}((p,n-1),-))\mbox{ for }n\mbox{ odd}.$$
\item The second part of the union in Lemma \ref{support-of-Dn} $(b)$ means that if $(p,q)$ is  high, then high vertex $(x,y)$ is in $\Supp(\Hom_{k(\mathbb{Z}D_{n})}((p,q),-))$ when  $x+y$ and $p+q$ have the same parity.
\end{enumerate}
\end{Rem}

We remark that the mesh category of $\mathbb{Z}A_n$ (resp. $\mathbb{Z}D_n$) can be identified with $D^b(kA_n)$-ind  (resp. $D^b(kD_n)$-ind), where $D^b(kA_n)$ (resp. $D^b(kD_n)$) denotes the bounded derived category of the path algebra $kA_n$ (resp. $kD_n$). From this point of view, Lemma \ref{Riedtmann-lemma} and Lemma \ref{support-of-Dn} describe the homomorphism spaces in $D^b(kA_n)$ and in $D^b(kD_n)$ respectively, and Remark 2.10 (1) is an explicit description of Serre duality in $D^b(kD_n)$ {\rm(cf. \cite{H})}.

\subsection{Simple-minded system}

\begin{Def}\label{definition-sms-representation-finite} {\rm(cf. \cite[Definition 2.4, 2.5]{Dugas})} Let $A$ be a self-injective algebra over an algebraically closed field $k$.
A family of objects $\mathcal{S}$ in $A$-$\stmod$ is a simple-minded system {\rm(sms for short)} if the following two conditions are satisfied$\colon$
\begin{enumerate}[$(1)$]
\item {\rm(Orthogonality)} For any $S,T\in\mathcal{S}$,
$\stHom_A(S,T)\cong \left\{\begin{array}{ll} 0 & (S\neq T), \\
k & (S=T).\end{array}\right.$
\item {\rm(Generating condition)} $\mathcal{F}(\mathcal{S})=A$-$\stmod$.
\end{enumerate}
\end{Def}

\begin{Rem}\label{reformulation-sms} Let $A$ be an RFS algebra and $\mathcal{S}$ an orthogonal system in $A$-$\stmod$. Then, according to \cite[Theorem 5.6]{KL}, the generating condition in an sms can be replaced by the following weak-generating condition$\colon$for any indecomposable non-projective
$A$-module $X$, there exists some $S\in \mathcal{S}$ such that $\stHom_A(X,S)\\\neq 0$.  Indeed, this fact gives the direct connection between sms's and combinatorial configurations at least for standard RFS algebras {\rm(cf. \cite{CKL})}.
\end{Rem}

Recall from \cite[Section 2, Section 4]{CKL} that for an RFS algebra $A$, if $\mathcal{S}$ is an sms in $A$-$\stmod$, then $\mathcal{S}$ satisfies the following three conditions.

\begin{enumerate}[$(1)$]
\item $\mathcal{S}$ is an orthogonal system in $A$-$\stmod$.
\item The cardinality of the set $\mathcal{S}$ is equal to the number of non-isomorphic simple $A$-modules.
\item $\mathcal{S}$ is Nakayama-stable, that is, the Nakayama functor $\nu$ permutes the elements of $\mathcal{S}$.
\end{enumerate}

Notice that the above condition (1) is obvious, but (2) and (3) are highly nontrivial. In fact, they are consequences of the following Liftability theorem (cf. \cite{CKL} Theorem 4.1)$\colon$if $\mathcal{S}$ is an sms over  RFS algebra $A$, then there is another RFS algebra $B$ and a derived equivalence $F\colon\mathcal {D}^{b}(B)\rightarrow \mathcal {D}^{b}(A)$ such that the induced stable equivalence $\widetilde{F}\colon B$-$\stmod\rightarrow A$-$\stmod$ maps simple $B$-modules into $\mathcal{S}$.

As a comparison, we would like to mention two interesting facts on combinatorial configurations for a general self-injective algebra$A\colon$any combinatorial configuration is Nakayama-stable (cf. \cite[Theorem 6.2]{J1}); a combinatorial configuration $\mathcal{C}$ is a simple-minded system if and only if $\mathcal{F}(\mathcal{C})$ is functorially finite in $A$-$\stmod$ (cf. \cite[Proposition 2.13]{CP}).

\section{A new characterization  for an orthogonal system to be an sms}

\subsection{Main result and its proof}
In this subsection, we show that the three conditions (1), (2) and (3) in last subsection are sufficient for $\mathcal{S}$ to be an sms. That is, we prove the following theorem.

\begin{Thm}\label{sms-sufficient-condition} Let $A$ be an RFS algebra and $\mathcal{S}$ a family of objects in $A$-$\stmod$. Then $\mathcal{S}$ is an sms if and only if $\mathcal{S}$ satisfies the following three conditions.
\begin{enumerate}[$(1)$]
\item $\mathcal{S}$ is an orthogonal system in $A$-$\stmod$.
\item The cardinality of $\mathcal{S}$ is equal to the number of non-isomorphic simple $A$-modules.
\item $\mathcal{S}$ is Nakayama-stable, that is, the Nakayama functor $\nu$ permutes the objects of $\mathcal{S}$.
\end{enumerate}
\end{Thm}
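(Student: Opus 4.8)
The plan is to prove the nontrivial ``if'' direction, since the ``only if'' direction is exactly the content recalled from \cite{CKL} immediately before the statement. So I assume that $\mathcal{S}$ satisfies $(1)$, $(2)$ and $(3)$; by Remark \ref{reformulation-sms} it suffices to verify the weak-generating condition, namely that for every indecomposable non-projective $X$ there is some $S\in\mathcal{S}$ with $\stHom_A(X,S)\neq 0$. Phrasing the goal in the language of torsion pairs, this says exactly that $\mathcal{^{\perp}S}=0$, which, via the torsion pair $(\mathcal{^{\perp}S},\mathcal{F}(\mathcal{S}))$, is equivalent to $\mathcal{F}(\mathcal{S})=A$-$\stmod$, i.e. to $\mathcal{S}$ being an sms.

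First I would record that, because $A$ is representation-finite, $A$-$\stmod$ has only finitely many indecomposables, so $\mathcal{F}(\mathcal{S})$ is functorially finite and both $(\mathcal{^{\perp}S},\mathcal{F}(\mathcal{S}))$ and $(\mathcal{F}(\mathcal{S}),\mathcal{S^{\perp}})$ are torsion pairs in $A$-$\stmod$ (this is Lemma \ref{two-torsion-pairs}). Using $\nu(\mathcal{S})=\mathcal{S}$ from condition $(3)$ we are then in the situation of Lemmas \ref{Nakayaka-stable-conclusion-1} and \ref{Nakayaka-stable-conclusion-2}, so the operators $\mathbf{a},\mathbf{b},\mathbf{c},\mathbf{d}$ are available. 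Suppose, for contradiction, that $\mathcal{S}$ is not an sms. Then $\mathcal{F}(\mathcal{S})\neq A$-$\stmod$, hence $\mathcal{S^{\perp}}\neq 0$; choose $0\neq Y\in\mathcal{S^{\perp}}$. Since $(\mathcal{F}(\mathcal{S}),\mathcal{S^{\perp}})$ is a torsion pair we have $\mathcal{F}(\mathcal{S})\cap\mathcal{S^{\perp}}=0$, so $Y\notin\mathcal{F}(\mathcal{S})$, and therefore $\mathbf{a}Y\neq 0$; Lemma \ref{Nakayaka-stable-conclusion-2}$(3)$ then gives $0\neq\mathbf{a}Y\in\mathcal{^{\perp}S^{\perp}}$. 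Thus failure of the generating condition forces $\mathcal{^{\perp}S^{\perp}}\neq 0$.

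The heart of the argument is then an enlargement step: I claim that whenever $\mathcal{S}$ is a Nakayama-stable orthogonal system with $\mathcal{^{\perp}S^{\perp}}\neq 0$, one can produce a strictly larger Nakayama-stable orthogonal system. Pick an indecomposable $M\in\mathcal{^{\perp}S^{\perp}}$. Since $\nu$ is a self-equivalence permuting $\mathcal{S}$, each $\nu^{i}M$ again lies in $\mathcal{^{\perp}S^{\perp}}$, so the whole $\nu$-orbit of $M$ is automatically both left- and right-orthogonal to $\mathcal{S}$. What remains is to choose $M$ within its orbit so that the orbit is itself an orthogonal system: every $\nu^{i}M$ is a stable brick and $\stHom_A(\nu^{i}M,\nu^{j}M)=0$ for $i\neq j$. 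This is precisely Lemma \ref{two-side-orthogonal}, which I would establish by lifting along the covering functor $F\colon k(\mathbb{Z}\Delta)\to A$-$\stmod$ and computing homomorphism spaces in the mesh category via Lemma \ref{Riedtmann-lemma} and Lemma \ref{support-of-Dn}, running through the list of types in Proposition \ref{RFS-algebra} (Lemmas \ref{Dn-En-stable-brick}--\ref{orthogonal-nu-orbit-of-D_{3m}}) and treating the non-standard type $(D_{3m},1/3,1)$ separately. Granting this, $\mathcal{S}':=\mathcal{S}\cup\{\nu^{i}M\}_{i}$ is a Nakayama-stable orthogonal system with $\mathcal{S}\subsetneq\mathcal{S}'$.

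Finally I would close the loop by a cardinality comparison. Combining the previous two paragraphs, any Nakayama-stable orthogonal system that is not an sms can be properly enlarged within the class of Nakayama-stable orthogonal systems. Iterating inside the finite category $A$-$\stmod$, one reaches a maximal such system $\widetilde{\mathcal{S}}\supseteq\mathcal{S}$, which can no longer be enlarged and is therefore an sms; by the already-known ``only if'' direction (the results of \cite{CKL}), $|\widetilde{\mathcal{S}}|$ equals the number $n$ of non-isomorphic simple $A$-modules. Since $\mathcal{S}\subseteq\widetilde{\mathcal{S}}$ and $|\mathcal{S}|=n$ by condition $(2)$, we get $\mathcal{S}=\widetilde{\mathcal{S}}$, so $\mathcal{S}$ is an sms, as desired. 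I expect the genuinely hard part to be Lemma \ref{two-side-orthogonal}: verifying, uniformly over the Dynkin types and separately in the non-standard case, that a two-sided-orthogonal indecomposable can be chosen with an orthogonal $\nu$-orbit. The torsion-pair bookkeeping and the concluding count are comparatively routine once that combinatorial input is in place; moreover, the enlargement-to-maximal construction yields the extension statement of the companion theorem essentially for free.
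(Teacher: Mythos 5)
Your skeleton coincides with the paper's: reduce via Lemma \ref{two-torsion-pairs} and Lemma \ref{Nakayaka-stable-conclusion-2}(3) to showing $\mathcal{^{\perp}S^{\perp}}=\{0\}$, and close by enlarging a Nakayama-stable orthogonal system with nonzero two-sided perpendicular until it becomes an sms, then invoking the cardinality of an sms from \cite{CKL} to force $\mathcal{S}=\widetilde{\mathcal{S}}$. That is exactly Case 3 of the paper's proof of Lemma \ref{two-side-orthogonal} plus the concluding torsion-pair argument. The genuine gap is in your enlargement step. You propose to pick an indecomposable $M\in\mathcal{^{\perp}S^{\perp}}$ and ``choose $M$ within its orbit so that the orbit is itself an orthogonal system,'' verifying this with the covering functor and the support formulas. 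This mechanism fails for precisely two families. For the self-injective Nakayama algebras of type $(A_n,s/n,1)$, an indecomposable of intermediate Loewy length is not a stable brick, and after reducing to the symmetric case one has $\nu=\mathrm{id}$, so its $\nu$-orbit is the singleton $\{M\}$ and no choice within the orbit can help. For type $(D_{3m},s/3,1)$, Lemma \ref{orthogonal-nu-orbit-of-D_{3m}}(3) shows that every low vertex $(p,q)$ with $m\leq q<3m-1$ has a non-orthogonal $\nu$-orbit, and since $\nu\cong\tau^{-m_\Delta}$ preserves the coordinate $q$ on low vertices, the entire orbit consists of such vertices. In both situations one must instead exhibit a \emph{different} indecomposable in $\mathcal{^{\perp}S^{\perp}}$ with orthogonal orbit, and that requires ideas not in your toolkit: for Nakayama algebras the paper uses an explicit non-split triangle to show that if the non-brick $X_i(as+b)$ lies in $\mathcal{^{\perp}S^{\perp}}$ then so does the stable brick $X_i(b)$ (or $X_i(n-s+b)$) (Lemma \ref{orthogonal-system-of-A_n-extend-to-sms-1}); for $D_{3m}$ it proves the containment $^{\perp}X^{\perp}\subseteq {^{\perp}S_{0}^{\perp}}$ for a suitable high vertex $S_0$, so that $S_0$ itself can be adjoined (Lemma \ref{orthogonal-system-of-D_n-extend-to-sms-1}).

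It is also worth noting that the paper's proof of Theorem \ref{sms-sufficient-condition} does not push these two families through the extension machinery at all: in Cases 1 and 2 of Lemma \ref{two-side-orthogonal} it uses the cardinality hypothesis (2) directly in a counting argument (each object of $\mathcal{S}$ ``labels'' certain $\overrightarrow{\Delta}$-lines of ${}_{s}\Gamma_A$, and orthogonality forces the labels to exhaust all lines, including the delicate bookkeeping of the unique $\nu$-orbit of high vertices in the $D_{3m}$ case). Your route, if completed, would simultaneously yield Theorem \ref{orthogonal-system-RFS-extend-to-sms}, as you observe; but as written the crucial enlargement claim is asserted rather than proved, and the specific way you propose to prove it would break on the two families above.
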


The proof of Theorem \ref{sms-sufficient-condition} will be given after we prove a technical lemma on orthogonality in $A$-$\stmod$ (Lemma \ref{two-side-orthogonal}).

\begin{Rem}\label{counter-examples}
\begin{enumerate}[$(a)$]
\item We cannot delete any condition from $(1),(2),(3)$. A counterexample for deleting $(3)$ comes from the self-injective Nakayama algebra $A$, where $A=kQ/I$ is given by the following quiver $Q$
\vspace{0.1cm}
$$\xymatrix@rd{
1\ar@/^/[r]&2\ar@/^/[d]\\
4\ar@/^/[u]&3\ar@/^/[l] }$$
\vspace{-0.45cm}

\noindent with admissible ideal $I=rad^{4}(kQ)$.
It is easy to check that
$\mathcal{S}=\left\{1,~~\begin{matrix}2\\3\\4\end{matrix}, ~~3, ~~\begin{matrix}4\\1\\2\end{matrix} \right\}$ satisfies $(1),(2)$ but not $(3)$, and $\mathcal{S}$ is not an sms.

\item In general, Theorem \ref{sms-sufficient-condition} does not hold for representation-infinite self-injective algebras. A counterexample is given by the algebra $k[x,y]/(x^{2},y^{2})$. It is a $4$-dimensional symmetric local algebra and its Auslander-Reiten quiver consists of a component containing the simple module and a $\mathbb{P}_{1}(k)$-family of  homogenous tubes. We take a module $X$ on the mouth of a homogenous tube, notice that any homogenous tube has only one indecomposable module on its mouth. Let $\mathcal{S}=\{X\}$. It is easy to check that  $\mathcal{S}$ satisfies the above three conditions. However, $\mathcal{S}$ is not an sms since $\mathcal{F}(\mathcal{S})$ is the additive closure of all the modules in  the homogenous tube which contains $X$.
\item It would be interesting to know whether Theorem \ref{sms-sufficient-condition} is false for every representation-infinite algebra.
 \end{enumerate}
\end{Rem}

The main tools in proving Theorem \ref{sms-sufficient-condition} are torsion pair theory and covering theory. One result we need from torsion pair theory is the following lemma, which is a special case of \cite[Proposition 2.3 (1)]{IY}.

\begin{Lem}\label{two-torsion-pairs} Let $A$ be a self-injective algebra and $\mathcal{S}$ an orthogonal system in $A$-$\stmod$. If the subcategory $\mathcal{F}(\mathcal{S})$ is functorially finite in $A$-$\stmod$, then both $(\mathcal{^\perp S}, \mathcal{F}(\mathcal{S}))$ and $(\mathcal{F}(\mathcal{S}),\mathcal{S^{\perp}})$ are torsion pairs in $A$-$\stmod$.
\end{Lem}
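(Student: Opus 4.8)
The plan is to verify the two defining conditions of a torsion pair (Definition \ref{torsion-pair}) for each of the two pairs, reducing everything to the abstract statement \cite[Proposition 2.3 (1)]{IY} applied to the single subcategory $\mathcal{F}(\mathcal{S})$. First I would record the structural properties of $\mathcal{F}(\mathcal{S})$ that are needed: by construction $\mathcal{F}(\mathcal{S})$ is extension-closed, i.e. $\mathcal{F}(\mathcal{S})\ast\mathcal{F}(\mathcal{S})\subseteq\mathcal{F}(\mathcal{S})$, and since $\mathcal{S}$ is an orthogonal system, Lemma \ref{orthogonal-bricks-closed-summands} guarantees that $\mathcal{F}(\mathcal{S})$ is closed under direct summands. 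Together with the standing hypothesis that $\mathcal{F}(\mathcal{S})$ is functorially finite, these are exactly the inputs required by the Iyama--Yoshino machinery.

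Next I would reduce the two one-sided orthogonal categories appearing in the statement to perpendicular categories of $\mathcal{F}(\mathcal{S})$ itself, namely I claim $\mathcal{S}^{\perp}=\mathcal{F}(\mathcal{S})^{\perp}$ and ${}^{\perp}\mathcal{S}={}^{\perp}\mathcal{F}(\mathcal{S})$. One inclusion is trivial because $\mathcal{S}\subseteq\mathcal{F}(\mathcal{S})$; for the other, given $Y\in\mathcal{S}^{\perp}$ I would prove $\stHom_A(X,Y)=0$ for every $X\in\mathcal{F}(\mathcal{S})$ by induction along the filtration $(\mathcal{S})_n$. Indeed, any $X\in(\mathcal{S})_n$ sits in a distinguished triangle $S_1\longrightarrow X\longrightarrow S_2\longrightarrow S_1[1]$ with $S_1\in(\mathcal{S})_{n-1}$ and $S_2\in\mathcal{S}\cup\{0\}$; applying $\stHom_A(-,Y)$ and using the induction hypothesis together with $\stHom_A(S_2,Y)=0$ forces $\stHom_A(X,Y)=0$. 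The dual induction gives ${}^{\perp}\mathcal{S}={}^{\perp}\mathcal{F}(\mathcal{S})$. Consequently condition $(1)$ of Definition \ref{torsion-pair}, i.e. $\stHom_A(\mathcal{F}(\mathcal{S}),\mathcal{S}^{\perp})=0$ and $\stHom_A({}^{\perp}\mathcal{S},\mathcal{F}(\mathcal{S}))=0$, becomes a tautology from the definition of the perpendicular categories, and the two candidate pairs are literally $(\mathcal{F}(\mathcal{S}),\mathcal{F}(\mathcal{S})^{\perp})$ and $({}^{\perp}\mathcal{F}(\mathcal{S}),\mathcal{F}(\mathcal{S}))$.

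It then remains to establish condition $(2)$, the decompositions $A$-$\stmod=\mathcal{F}(\mathcal{S})\ast\mathcal{S}^{\perp}$ and $A$-$\stmod={}^{\perp}\mathcal{S}\ast\mathcal{F}(\mathcal{S})$, and this is where functorial finiteness enters. For the first, given $T$ I would take a minimal right $\mathcal{F}(\mathcal{S})$-approximation $U\longrightarrow T$ (available by contravariant finiteness) and complete it to a distinguished triangle $U\longrightarrow T\longrightarrow V\longrightarrow U[1]$, the task being to verify $V\in\mathcal{F}(\mathcal{S})^{\perp}$. Dually, using covariant finiteness I would take a left $\mathcal{F}(\mathcal{S})$-approximation $T\longrightarrow U'$ and complete it to a triangle $V'\longrightarrow T\longrightarrow U'\longrightarrow V'[1]$, checking that $V'\in{}^{\perp}\mathcal{F}(\mathcal{S})$. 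Both checks are precisely the conclusion of \cite[Proposition 2.3 (1)]{IY} for the extension-closed, summand-closed, functorially finite subcategory $\mathcal{F}(\mathcal{S})$, so I would simply invoke that proposition once the hypotheses have been verified.

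The main obstacle is exactly this last point: showing that the cone of an approximation lands in the perpendicular category is not formal. The surjectivity of the approximation only shows that $\stHom_A(X',V)$ injects into $\stHom_A(X',U[1])$ for $X'\in\mathcal{F}(\mathcal{S})$; to kill a nonzero map $\phi\colon X'\to V$ one pulls back the approximation triangle along $\phi$ to obtain a new object $E\in\mathcal{F}(\mathcal{S})\ast\mathcal{F}(\mathcal{S})\subseteq\mathcal{F}(\mathcal{S})$ (here extension-closedness is essential), factors the resulting map $E\to T$ through the approximation so that its composite into $V$ vanishes, and then runs a nilpotency argument relying on the \emph{Hom}-finiteness and Krull--Schmidt property of $A$-$\stmod$. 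Since this argument is precisely the content packaged in \cite[Proposition 2.3 (1)]{IY}, the role of the present lemma is to check that our $\mathcal{F}(\mathcal{S})$ meets those hypotheses and to translate the two resulting approximation triangles into the language of the torsion pairs $({}^{\perp}\mathcal{S},\mathcal{F}(\mathcal{S}))$ and $(\mathcal{F}(\mathcal{S}),\mathcal{S}^{\perp})$.
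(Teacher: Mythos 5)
Your proposal is correct and matches the paper's approach: the paper gives no separate argument but simply observes that the lemma is a special case of \cite[Proposition 2.3 (1)]{IY}, which is exactly the reduction you carry out (verifying that $\mathcal{F}(\mathcal{S})$ is extension-closed, summand-closed by Lemma \ref{orthogonal-bricks-closed-summands}, and functorially finite, and identifying $\mathcal{S}^{\perp}=\mathcal{F}(\mathcal{S})^{\perp}$ and ${}^{\perp}\mathcal{S}={}^{\perp}\mathcal{F}(\mathcal{S})$ by induction along the filtration). Your write-up just makes explicit the routine verifications the paper leaves implicit.
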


\begin{Rem}
 The condition that $\mathcal{F}(\mathcal{S})$ is functorially finite  in  Hom-finite, Krull-Schmidt triangulated $k$-categories is very useful  and  applied  in a number of recent works  {\rm(cf. \cite{CP,CPP,J2})}. The condition that $\mathcal{F}(\mathcal{S})$ is functorially finite in $A$-$\stmod$ clearly holds for RFS algebras.  It would also be interesting to find applications of Lemma \ref{two-torsion-pairs} for representation-infinite algebras. 
\end{Rem}

We now prove three lemmas (Lemma \ref{Dn-En-stable-brick} to Lemma \ref{orthogonal-nu-orbit-of-D_{3m}}) on orthogonality properties in the stable categories of  several classes of RFS algebras, based on the description of supports in mesh categories of $\mathbb{Z}A_n, \mathbb{Z}D_n $  and the covering theory in \cite{BLR}, \cite{Riedtmann2}, \cite{Riedtmann3} and \cite{Riedtmann4}.  We shall freely switch between nonzero indecomposable modules  over RFS algebras and vertices in the corresponding mesh category $k(\mathbb{Z}\Delta)$, where $\Delta$ is a Dynkin quiver.
	

 \begin{Lem}\label{Dn-En-stable-brick} Let $A$ be an RFS algebra of type $(A_{2p+1},s,2)$,
  $D_{n}$ or $E_{n}$ {\rm(except type $(D_{3m},1/3,1)$ for some $m\geq2$).} Then every indecomposable module $X$ is a stable brick in $A$-$\stmod$.
\end{Lem}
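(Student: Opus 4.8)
The plan is to compute $\stHom_A(X,X)$ directly through the covering functor. Since $A$ is of one of the listed types, it is standard: by Remark \ref{standard-non-standard-pair} the only non-standard RFS algebras have type $(D_{3m},1/3,1)$, which is excluded. Hence $A$-$\stind\cong k(_{s}\Gamma_{A})$ and there is a Galois covering functor $F\colon k(\mathbb{Z}\Delta)\to A$-$\stmod$ with group $G=\langle\sigma\tau^{-r}\rangle$, where $r=fm_{\Delta}$. Writing the nonzero indecomposable $X$ as $X=Fe$ for a vertex $e$ of $\mathbb{Z}\Delta$, and using the bijection recalled above together with the fact that $Fh=Fe$ exactly when $h\in Ge$ (with $G$ acting freely), I would obtain
$$\stHom_A(X,X)\cong\bigoplus_{g\in G}\Hom_{k(\mathbb{Z}\Delta)}(e,ge).$$
The summand $g=\mathrm{id}$ gives $\Hom_{k(\mathbb{Z}\Delta)}(e,e)\cong k$, since the only path from $e$ to itself in $\mathbb{Z}\Delta$ is the trivial one. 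The whole problem then reduces to showing $\Hom_{k(\mathbb{Z}\Delta)}(e,ge)=0$ for every $g\in G\setminus\{\mathrm{id}\}$.

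The key geometric input is that along any arrow of $\mathbb{Z}\Delta$ the first coordinate is non-decreasing, so the support of $\Hom_{k(\mathbb{Z}\Delta)}(e,-)$ lies weakly to the right of $e$ and occupies only a bounded band of first coordinates: of width at most $m_{\Delta}-1=\ell-1$ for $\Delta=A_{\ell}$ (Lemma \ref{Riedtmann-lemma}), contained in $[p,p+n-2]$ for $\Delta=D_{n}$ (Lemma \ref{support-of-Dn}), and, for $\Delta=E_{n}$, contained in $[x,x+m_{\Delta}-1]$ since every path of length $\ge m_{\Delta}$ vanishes in $k(\mathbb{Z}\Delta)$. On the other hand $\tau^{-r}$ shifts the first coordinate by $+r$, while the diagram automorphism $\sigma$ changes it by a bounded, $r$-independent amount. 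I would therefore estimate the first-coordinate displacement of $ge$ for $g=(\sigma\tau^{-r})^{j}$, $j\ne0$, and check that it leaves the support band, either backwards (for $j<0$) or too far forwards (for $j>0$).

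Concretely, in the cases with $\sigma=\mathrm{id}$ --- types $(D_{n},s,1)$, $(D_{3m},s/3,1)$ with $s\ge2$, and $(E_{n},s,1)$ --- one has $ge=\tau^{-rj}e$, with first-coordinate shift exactly $rj$, so it suffices to check $r\ge n-1$ for type $D$ and $r\ge m_{\Delta}$ for type $E$. A short arithmetic using $r=fm_{\Delta}$ confirms this in each listed case, and also pinpoints $(D_{3m},1/3,1)$ as exactly the type where $r=2m-1<3m-1=n-1$ fails, explaining its exclusion. In the cases with $\sigma\ne\mathrm{id}$ I would use the explicit action of $\sigma$ on $\mathbb{Z}\Delta$. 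For $(A_{2p+1},s,2)$ one has $\sigma(x,i)=(x+i-1,\,n+1-i)$, hence $\sigma^{2}=\tau^{-(n-1)}$, and therefore
$$(\sigma\tau^{-r})^{2k}(x,i)=(x+k(2r+n-1),\,i),\qquad (\sigma\tau^{-r})^{2k+1}(x,i)=(x+k(2r+n-1)+r+i-1,\,n+1-i).$$
Feeding these into Lemma \ref{Riedtmann-lemma} shows that for every $j\ne0$ the displacement violates one of the inequalities $\max(0,2i-n-1)\le\Delta x\le i-1$; the decisive point is that already $j=\pm1$ overshoots because $r\ge1$, so the Hom vanishes. The types $(D_{n},s,2)$, $(D_{4},s,3)$ and $(E_{6},s,2)$ are handled the same way, replacing Lemma \ref{Riedtmann-lemma} by Lemma \ref{support-of-Dn} and the path-length bound.

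The setup (covering functor, reduction to orbit sums) is routine; the real work, and the main obstacle, is the case-by-case escape-from-support verification. The delicate points are: (i) for type $D$ one must use the sharp band $[p,p+n-2]$ of Lemma \ref{support-of-Dn} rather than the cruder $m_{\Delta}$-bound, since for $(D_{3m},s/3,1)$ one can have $r<m_{\Delta}$; (ii) the wrapping second branch of the support in Lemma \ref{support-of-Dn} and the high/low parity condition must be checked not to re-enter when $\sigma$ permutes the high vertices; and (iii) in the $\sigma\ne\mathrm{id}$ cases the crude ``shift $\ge$ width'' estimate is insufficient when $s=1$, so one genuinely needs the exact coordinate form of $\sigma$ and the precise Hom inequalities to rule out the near neighbours $j=\pm1$.
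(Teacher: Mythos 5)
Your proposal follows essentially the same route as the paper's proof: both reduce via the covering functor to showing $\Hom_{k(\mathbb{Z}\Delta)}(e,ge)=0$ for every $\mathrm{id}\neq g$ in the Galois group $\langle\sigma\tau^{-fm_\Delta}\rangle$, and both do so by a case-by-case check that the translate $ge$ escapes the support of $\Hom_{k(\mathbb{Z}\Delta)}(e,-)$, using Lemmas \ref{Riedtmann-lemma} and \ref{support-of-Dn} together with the vanishing of paths of length at least $m_\Delta$. The only substantive difference is bookkeeping: the paper measures separation by path length in the mesh category, getting the uniform estimate $\ell(X,X)\geq 2fm_{\Delta}-(n-1)>m_{\Delta}$ (the factor $2$ arising because each application of $\tau^{-1}$ adds $2$ to path length), whereas you measure horizontal displacement, which is weaker by exactly that factor of $2$ --- this is why your points (i) and (iii) correctly flag the $s=1$, $\sigma\neq\mathrm{id}$ cases such as $(E_{6},1,2)$ as delicate, and there switching from first-coordinate displacement to path length closes the estimate cleanly.
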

\begin{proof}
Recall that under our assumptions all algebras are standard and we can identify $A$-$\stind$ with $k(_{s}\Gamma_{A})\cong k(\mathbb{Z}\Delta)/\langle\sigma\tau^{-m_{\Delta}f}\rangle$. Let X be an indecomposable module  in $A$-$\stmod$ and  $G$ the infinite  cyclic group generated by $\sigma\tau^{-m_{\Delta}f}$. From covering theory we know that
\vspace{0.2cm}
$$\stHom_{A}(X,X)=\Hom_{k(_{s}\Gamma_{A})}(X,X)\cong\bigoplus_{g(E)=X, g\in G}\Hom_{k(\mathbb{Z}\Delta)}(E, Y).$$
Let $\ell(X,X)$ be the minimal length of all the nontrivial paths from $E$ to $X$ in $k(\mathbb{Z}\Delta)$, where $E$ varies in $\mathbb{Z}\Delta$ and satisfies $g(E)=X$ for some $g\in G$. For a given vertex $X$, in the proof below we will see that the vertex $E$ corresponding to the minimal length is unique. We claim that $\ell(X,X)$ is greater than $m_{\Delta}$ and therefore $\Hom_{k(\mathbb{Z}\Delta_{n})}(E, X)=0$.

We first consider the case $\{(D_{3m},s/3,1)\mid m,s\in \mathbb{N}, m,s\geq2,3\nmid s\}$. In this case $f=s/3$ and $k(_{s}\Gamma_{A})\cong k(\mathbb{Z}D_{3m})/\langle\tau^{-s(m_{D_{3m}})/3}\rangle=k(\mathbb{Z}D_{3m})/\langle\tau^{-s(2m-1)}\rangle$.
It follows that any $E$ with $g(E)=X$ for some $g\in G$ has the form $\tau^{-sz(2m-1)}(X)$ for some  integer $z$.
Now it is easy to see that $\ell(X,X)$ is $2s(2m-1)$ and it is greater than $m_{D_{3m}}=2\times (3m)-3$, where $s\geq2$.

For the other cases, $f$ is always a positive integer and $k(_{s}\Gamma_{A})\cong k(\mathbb{Z}\Delta)/\langle\sigma\tau^{-m_{\Delta}f}\rangle$. By \cite[Proposition 2.1]{A2}, for type $A_{n}$, $D_{n}$ or $E_{n}$, the automorphism $\sigma$ of $\mathbb{Z}\Delta$ is induced from some automorphism of $\overrightarrow{\Delta}$ (under the choice of orientation on $\Delta$ given in \cite[Section 2]{A2}). For the convenience of the reader, we list the orientation on $\Delta$ used in \cite[Section 2]{A2} according to the type of $\Delta$.

\vspace{0.15cm}
$$\xymatrix{
    1  & 2 \ar[l] & \cdots \ar[l] & p \ar[l] &\ar[l] p+1\ar[r]& p+2 \ar[r] &\cdots\ar[r]& n\ \  (\overrightarrow{A_{n}}, n=2p+1, p\in \mathbb{Z}), }$$

$$\xymatrix{
     & &  & n  &  \\
   1  & 2 \ar[l] & \cdots \ar[l] & n-2 \ar[l]\ar[r] \ar[u] & n-1\ \  (\overrightarrow{D_{n}}, n\geq4), }$$


$$\xymatrix{
		& & n & &  \\
		1  & 2 \ar[l] & 3\ar[r]\ar[l]\ar[u] &\cdots\ar[r] &n-1\ \    (\overrightarrow{E_{n}}, n=6, 7, 8).}$$

 \vspace{0.1cm}

\noindent We define a $\overrightarrow{\Delta}$-line to be a set of vertices of the form $\tau^z(\overrightarrow{\Delta})$ in $\mathbb{Z}\Delta$ for some $z\in \mathbb{Z}$.
 Therefore $\sigma\tau^{-m_{\Delta}f}(E)$ is in the same $\overrightarrow{\Delta}$-line with $\tau^{-m_{\Delta}f}(E)$ in $k(\mathbb{Z}\Delta)$. For a given $X$ in a $\overrightarrow{\Delta}$-line $\overrightarrow{\Delta}$, the unique vertex $E$ corresponding to the minimal length lies in $\overrightarrow{\Delta}$-line $\tau^{m_{\Delta}f}(\overrightarrow{\Delta})$.
It is easy to check case by case that the length of any path from a vertex in $\tau^{m_{\Delta}f}(\overrightarrow{\Delta})$ to $X$ is greater than or equal to $2fm_{\Delta}-(n-1)$, which is again greater than $m_{\Delta}$.
We illustrate the result with the case $\{(A_{2p+1},s,2)\mid p,s\in\mathbb{N}\}$ in the picture below.
	\begin{small}
		\begin{center}
			\vspace{-0.5cm}
			\setlength{\unitlength}{0.35cm}
			\begin{picture}(15,14)
			\put(-14,0){\line(1,0){43}}
			\put(-14,10){\line(1,0){43}}
			\put(28,0){\line(-3,5){3}}
			\put(21.95,0){\line(3,5){6}}
			\put(16.1,0){\line(-3,5){3}}
			\put(4.2,10){\line(-3,-5){6}}
			\put(10.15,0){\line(3,5){6}}
			\put(4.2,0){\line(-3,5){3}}
			\put(-7.7,0){\line(-3,5){3}}
			\put(-13.75,0){\line(3,5){6}}
			
			\put(-11.5,5){\line(1,0){0.1}}
			\put(-12,5){\line(1,0){0.1}}
			\put(-12.5,5){\line(1,0){0.1}}	
			\put(-4,5){\line(1,0){0.1}}
			\put(-3.5,5){\line(1,0){0.1}}
			\put(-3,5){\line(1,0){0.1}}
			\put(7,5){\line(1,0){0.1}}
			\put(7.5,5){\line(1,0){0.1}}
			\put(8,5){\line(1,0){0.1}}
			\put(28,5){\line(1,0){0.1}}
			\put(28.5,5){\line(1,0){0.1}}
			\put(29,5){\line(1,0){0.1}}
			
			\put(-3.5,-0.8){$(-p,1)$}
			\put(3,-0.8){$(0,1)$}
			\put(3.6,10.8){$(0,2p+1)$}
			\put(-11,-0.8){$(-(2p+1)s,1)$}
			\put(-11,10.8){$(-(2p+1)s,2p+1)$}
			\put(14.6,-0.8){$((2p+1)s,1)$}
			\put(24.5,-0.8){$(2(2p+1)s,1)$}
			\put(7.2,-0.8){$((2p+1)s-p,1)$}
			\put(14,5){$((2p+1)s,p+1)$}
			\put(2,5){$(0,p+1)$}
			\put(14,10.8){$((2p+1)s,2p+1)$}
			\end{picture}
		\end{center}
	\end{small}
	\vspace{0.5cm} 
	We only need to show that $\stHom_{A}((0,i),(0,i))\cong k$ for vertex $(0,i)$ for  each $1\leq i \leq 2p+1$,  since any other vertex can be obtained by the power of self-equivalence $\tau$ of $A$-$\stmod$. 
	If we assume that $(0,i)$  is in $\overrightarrow{\Delta}$-line for $1\leq i\leq2p+1$, then $\sigma\tau^{m_{\Delta}f}((0,i))=(-(2p+1)s,2p+2-i)$ and the vertex in $\sigma\tau^{m_{\Delta}f}(\overrightarrow{\Delta}$)-line is of the form $(-(2p+1)s,j)$ for $1\leq j\leq 2p+1$.  It is easy to see that
	$\ell((0,i),(0,i))$ is the length of path from $(-(2p+1)s,2p+2-i)$ to $(0,i)$ in  $k(\mathbb{Z}A_{2p+1})$, it is greater than or equal to the length of path from $(-(2p+1)s,2p+2-i)$  to $(0,p+1)$  in  $k(\mathbb{Z}A_{2p+1})$,  which is $2(2p+1)s-|p+1-i|$. Since $2(2p+1)s-|p+1-i|\geq 2(2p+1)s-2p > 2p+1$, $\ell((0,i),(0,i))>2p+1$, where $|a|$ is the absolute value of a number $a$.

Summarizing the above discussion, in all cases we have $\stHom_{A}(X,X)\cong \Hom_{k(\mathbb{Z}\Delta)}(X, X)\cong k$. 
\end{proof}

 \begin{Lem}\label{orthogonal-nu-orbit} Let $A$ be an  RFS algebra of type $(A_{2p+1},s,2)$, $D_{n}$ or $E_{n}$ {\rm(except  type $\{(D_{3m},s/3,1)\mid m,s\in\mathbb{N}, m\geq2,3\nmid s\}$).} Then the $\nu$-orbit $O_{\nu}(X)$ of any indecomposable module $X$ is an orthogonal system in $A$-$\stmod$.
\end{Lem}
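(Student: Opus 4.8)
The plan is to mimic the proof of Lemma \ref{Dn-En-stable-brick}, reducing orthogonality of the orbit to a support computation in $k(\mathbb{Z}\Delta)$. Since $\nu$ is a self-equivalence of $A$-$\stmod$ it preserves indecomposability, so every object of $O_{\nu}(X)$ is indecomposable and hence a stable brick by Lemma \ref{Dn-En-stable-brick}; thus only the off-diagonal vanishing remains. Using again that $\nu$ is a self-equivalence, $\stHom_A(\nu^i X,\nu^j X)\cong\stHom_A(X,\nu^{j-i}X)$, so the full orthogonality condition follows once I show
$$\stHom_A(X,\nu^k X)=0\qquad\text{whenever }\nu^k X\ncong X\text{ in }A\text{-}\stmod.$$
Note that $O_{\nu}(X)$ is finite: on the finite stable Auslander--Reiten quiver both $\tau$ and $\Omega$ have finite order, and $\nu=\tau\Omega^{-2}$ (this follows from the Serre functor being $\nu\Omega=\tau[1]$).

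Next I would transport the problem to $\mathbb{Z}\Delta$. As $A$ is standard, I identify $A$-$\stind$ with $k(_{s}\Gamma_{A})\cong k(\mathbb{Z}\Delta)/\langle\sigma\tau^{-m_{\Delta}f}\rangle$ and lift $\nu$ to an automorphism $\tilde\nu$ of $\mathbb{Z}\Delta$ normalizing the covering group $G=\langle\sigma\tau^{-m_{\Delta}f}\rangle$; its action on coordinates is read off from $\nu=\tau\Omega^{-2}$ together with the known coordinate actions of $\tau$ and of the suspension $\Omega^{-1}$. Fixing a lift $\tilde X$ of $X$, covering theory yields
$$\stHom_A(X,\nu^k X)\cong\bigoplus_{g\in G}\Hom_{k(\mathbb{Z}\Delta)}(\tilde X,\,g\tilde\nu^k\tilde X),$$
so it suffices to prove that, whenever $\nu^k X\ncong X$, every vertex $g\tilde\nu^k\tilde X$ lies outside $\Supp\Hom_{k(\mathbb{Z}\Delta)}(\tilde X,-)$.

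This vanishing is then checked type by type via the support descriptions already recalled: Lemma \ref{Riedtmann-lemma} for $\Delta=A_{2p+1}$ and Lemma \ref{support-of-Dn} for $\Delta=D_n$, supplemented by direct mesh-category estimates for $\Delta=E_n$. As in Lemma \ref{Dn-En-stable-brick}, the mechanism is a length/position estimate: the translation factor $\tau^{m_{\Delta}f}$ built into $G$, combined with $\tilde\nu$, pushes every $g\tilde\nu^k\tilde X$ so far along the $\tau$-direction that the associated paths exceed length $m_{\Delta}$ (type $A_{2p+1}$) or fall outside the explicit support region (type $D_n$). For the symmetric algebras occurring in the list, namely $(D_n,1,1)$ and $(E_n,1,1)$, the Nakayama functor is isomorphic to the identity, so $O_{\nu}(X)=\{X\}$ and orthogonality is vacuous, only the brick condition of Lemma \ref{Dn-En-stable-brick} being needed.

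I expect the genuine difficulty to lie in type $D_n$. There the support in Lemma \ref{support-of-Dn} splits according to whether $\tilde X$ and the target vertex are low or high, and the high--high contribution is governed by the parity constraint $x+y\equiv p+q\pmod 2$; one must verify that the coordinates of $\tilde\nu^k\tilde X$ and of all its $G$-translates simultaneously violate the positional inequalities and, where relevant, this parity condition. Coordinating the interplay between the flip $\sigma$ (present for $A_{2p+1}$ and for several $D_n$ and $E_6$ subtypes), the translation $\tau^{m_{\Delta}f}$, and $\tilde\nu$ across the subtypes listed in Proposition \ref{RFS-algebra} is where the bookkeeping becomes delicate.
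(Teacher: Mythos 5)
Your proposal is correct and follows essentially the same route as the paper: covering theory to reduce to $k(\mathbb{Z}\Delta)$, the split into symmetric algebras (where $\nu\cong\mathrm{id}$ and Lemma \ref{Dn-En-stable-brick} already suffices) versus the rest, and a path-length/position estimate for the off-diagonal vanishing. The only real difference is that the paper avoids the type-by-type support bookkeeping you anticipate for $D_n$ by quoting $\nu\cong\tau^{-m_{\Delta}}$ on $k(\mathbb{Z}\Delta)$ from \cite{BS}, which gives the uniform estimate $\ell(\nu^{i}(X),\nu^{j}(X))=2(\overline{j-i})m_{\Delta}>m_{\Delta}$ in all types at once — the same identity your derivation from $\nu=\tau\Omega^{-2}$ would produce once carried out.
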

\begin{proof}
We first note that $k(_{s}\Gamma_{A})\cong k(\mathbb{Z}\Delta)/\langle\sigma\tau^{-m_{\Delta}f}\rangle$. Let $X$, $Y$ be  indecomposable modules in $A$-$\stmod$ and $G$ the infinite  cyclic group generated by $\sigma\tau^{-m_{\Delta}f}$.
From covering theory, we have
\vspace{0.2cm}
$$\stHom_{A}(X,Y)=\Hom_{k(_{s}\Gamma_{A})}(X,Y)\cong\bigoplus_{g(E)=X, g\in G}\Hom_{k(\mathbb{Z}\Delta)}(E, Y).$$
Let $\ell(X,Y)$ be the minimal length of all the nontrivial paths from $E$ to $Y$ in $k(\mathbb{Z}\Delta)$,
where $E$ varies in $\mathbb{Z}\Delta$ and satisfies $g(E)=X$ for some $g\in G$. According to the result in \cite[Proposition 1.5 and 1.6]{BS}, we know that the Nakayama functor $\nu\cong\tau^{-m_{\Delta}}$ in $k(\mathbb{Z}\Delta)$ . It follows that $\nu(X)\cong\tau^{-m_{\Delta}}(X)$ in $k(\mathbb{Z}\Delta)/\langle\sigma\tau^{-m_{\Delta}f}\rangle$. Any $E$ with $g(E)=\nu(X)$ has the form $\sigma^{z}\tau^{-m_{\Delta}(zf+1)}(X)$ for some integer $z$, notice that $\sigma\tau=\tau\sigma$ in $k(\mathbb{Z}\Delta)$.

Under the above notations, we now show that  the $\nu$-orbit $O_{\nu}(X)$ of $X$ is an orthogonal system in $A$-$\stmod$.
There are two cases to be considered. For the case of symmetric algebras, we know that $\nu\cong id$, by Lemma \ref{Dn-En-stable-brick}, the $\nu$-orbit $O_{\nu}(X)$ of $X$ is an orthogonal system.
For the other cases, we show that $\stHom_{A}(\nu^{i}(X),\nu^{j}(X))=0$ for integers $i,j,$ where $i\neq j,0\leq i,j<m$ and $m$ is the order of $\nu$ for $X$.
It is easy to see that $\ell({\nu^{i}(X)},\nu^{j}(X))=2(\overline{j-i})m_{\Delta}$ for $i\neq j$,
which is  greater than $m_{\Delta}$, where $ j-i\equiv\overline{j-i}$ mod $n$. Therefore
$\stHom_{A}(\nu^{i}(X),\nu^{j}(X))=0.$  It follows that the $\nu$-orbit $O_{\nu}(X)$ of any indecomposable module $X$ is an orthogonal system in $A$-$\stmod$.

We illustrate  the above result through the case $\{(D_{n},s,2)\mid n,s\in\mathbb{N}, s\geq2, n>4 \}$ in the picture below.
	
	\begin{small}
		\begin{center}
			\vspace{-0.6cm}
			\setlength{\unitlength}{0.35cm}
			\begin{picture}(15,14)
			\put(-12,0){\line(1,0){38}}
			\put(-12,10){\line(1,0){38}}
			
			\put(15.4,8.9){\line(1,0){1.1}}
			\put(-3.5,8.9){\line(1,0){1.1}}
			\put(5.4,8.9){\line(1,0){1.1}}
			
			\put(-5,0){\line(-3,5){5.4}}
			\put(-9.7,10){\line(-3,-5){.7}}
			\put(-10.4,8.9){\line(1,0){1.1}}
			
			\put(2,0){\line(-3,5){5.4}}
			\put(-2.7,10){\line(-3,-5){.7}}
			
			\put(10.8,0){\line(-3,5){5.4}}
			\put(6.1,10){\line(-3,-5){.7}}
			
			\put(20.8,0){\line(-3,5){5.4}}
			\put(16.1,10){\line(-3,-5){.7}}
			
			\put(20,5){\line(1,0){0.1}}
			\put(20.5,5){\line(1,0){0.1}}
			\put(21,5){\line(1,0){0.1}}
			\put(13.5,5){\line(1,0){0.1}}
			\put(14,5){\line(1,0){0.1}}
			\put(14.5,5){\line(1,0){0.1}}	
			\put(2,5){\line(1,0){0.1}}
			\put(2.5,5){\line(1,0){0.1}}
			\put(3,5){\line(1,0){0.1}}	
			\put(-6,5){\line(1,0){0.1}}
			\put(-5.5,5){\line(1,0){0.1}}
			\put(-5,5){\line(1,0){0.1}}	
			\put(-10,5){\line(1,0){0.1}}
			\put(-10.5,5){\line(1,0){0.1}}
			\put(-11,5){\line(1,0){0.1}}
			
			\put(-12,10.3){$((0 ,n-1)$}
			\put(-9.2,8.5){$(0,n)$} 
			\put(-6,-0.8){$(0,1)$}	
			\put(-3.9,10.3){$((2n-3)i,n-1)$}
			\put(-2.2,8.5){$((2n-3)i,n)$}
			\put(-1.5,-0.8){$((2n-3)i,1)$}
			\put(3.5,10.3){$((2n-3)j ,n-1)$} 
			\put(6.6,8.5){$((2n-3)j,n)$}
			\put(8,-0.8){$((2n-3)j,1)$}
			\put(13,10.3){$((2n-3)s ,n-1)$} 
			\put(16.6,8.5){$((2n-3)s,n)$}
			\put(17.5,-0.8){$((2n-3)s,1)$} 
			
			\end{picture}
		\end{center}
	\end{small}
	\vspace{0.6cm}
	We only need  to show that the $\nu$-orbit $O_{\nu}((0,p))$ of any indecomposable module $(0,p)$ is an orthogonal system in $A$-$\stmod$ for  all $1\leq p \leq n$, since any other vertex can be obtained by the power of self-equivalence $\tau$ of $A$-$\stmod$. If $1 \leq p\leq n-2$, then $\sigma\tau^{-m_{D_{n}}f}(0,p)=((2n-3)s,p)$, $\sigma\tau^{-m_{D_{n}}f}(0,n)=((2n-3)s,n-1)$ and $\sigma\tau^{-m_{D_{n}}f}(0,n-1)=((2n-3)s,n).$
	It is also clear that $\nu^{q}((0,p))=((2n-3)q,p)$ for $1\leq p \leq n$ and $ q\in\mathbb{Z}$,  $\sigma\nu^{s}((0,n))=((2n-3)s,n-1)$ and $\sigma\nu^{s}((0,n-1))=((2n-3)s,n)$.
	We can see that $\ell({\nu^{i}((0,p))},\nu^{j}((0,p)))=2(\overline{j-i})(2n-3)$ for $i\neq j$,  $0\leq i,j<s$ and $1\leq p\leq n,$ which is greater than $2n-3$. Therefore $\stHom_{A}(\nu^{i}((0,p)),\nu^{j}(0,p)))=0,$  it follows that the $\nu$-orbit $O_{\nu}((0,p))$  is an orthogonal system in $A$-$\stmod$.
\end{proof}

\begin{Lem}\label{orthogonal-nu-orbit-of-D_{3m}} Let $A$ be a standard RFS algebra of type $\{(D_{3m},s/3,1)\mid m,s\in\mathbb{N}, m\geq2,3\nmid s\}$ and let $X=(p,q)$ be a vertex in $_{s}\Gamma_{A}$ for some integers $p,q$, where $1\leq q\leq3m$. Then we have the following.
\begin{enumerate}[$(1)$]
\item If $1\leq q<m$ or $q\geq3m-1$, then $X$ is a stable brick.
\item If  $1\leq q<m$ or $q\geq3m-1$, then the $\nu$-orbit $O_{\nu}(X)$ of $X$ is an orthogonal system.
\item If $m\leq q<3m-1$, then the $\nu$-orbit $O_{\nu}(X)$ of $X$ is not an orthogonal system.
\end{enumerate}
\end{Lem}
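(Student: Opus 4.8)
The plan is to compute every stable hom space $\stHom_A(\nu^i X,\nu^j X)$ directly through the covering functor, reduce each to a single, explicitly located hom space in the mesh category $k(\mathbb{Z}D_{3m})$, and then read off (non)vanishing from Lemma \ref{support-of-Dn}.

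First I would fix the numerics. Since the type has $t=1$, the automorphism $\sigma$ is trivial; with $f=s/3$ and $m_{D_{3m}}=6m-3$, writing $\ell:=2m-1$ gives $m_\Delta f=s\ell$, so $_s\Gamma_A\cong k(\mathbb{Z}D_{3m})/\langle\tau^{-s\ell}\rangle$ and $G:=\langle\tau^{-s\ell}\rangle$. As in the proof of Lemma \ref{orthogonal-nu-orbit} one has $\nu\cong\tau^{-m_\Delta}=\tau^{-3\ell}$, hence $\nu^i(p,q)=(p+3i\ell,q)$ and, because $\gcd(3,s)=1$, $\nu$ has order $s$. Covering theory gives $\stHom_A(U,V)\cong\bigoplus_{g\in G}\Hom_{k(\mathbb{Z}D_{3m})}(gU,V)$. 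I would then specialize Lemma \ref{support-of-Dn} to two vertices in one row $U=(a,q)$, $V=(a+\delta,q)$ (here $n=3m$): the hom space vanishes for $\delta<0$, and for $\delta\ge0$ it is nonzero iff $\delta\in[0,q-1]\cup[n-1-q,n-2]$ when $q\le 3m-2$ (low), respectively $\delta$ even with $0\le\delta\le n-2$ when $q\ge 3m-1$ (high).

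The key reduction is that, applied to $U=\nu^iX$ and $V=\nu^jX$ (which lie in the common row $q$ since $\nu$ fixes the second coordinate), the contributing shifts are exactly $\delta=(3(j-i)+zs)\ell$ with $z\in\mathbb{Z}$. Since $n-2=3m-2<2\ell$, any $\delta\in[0,n-2]$ forces $3(j-i)+zs\in\{0,1\}$; the value $0$ is impossible because $\gcd(3,s)=1$ would give $s\mid(j-i)$, and $3(j-i)+zs=1$ pins the unique shift $\delta=\ell=2m-1$. Thus everything collapses to two facts: $\delta=2m-1$ lies in the low support iff $q\ge m$ (it then falls in $[n-1-q,n-2]$), and it never lies in the high support (it is odd). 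For the diagonal $i=j$ only $\delta=0$ can otherwise occur, so $\stHom_A(X,X)\cong k$ whenever $q<m$ (the candidate $2m-1$ misses the low support) or $q\ge 3m-1$ (it misses the high support), proving $(1)$. For $i\ne j$ (so $s\ge2$) with $q<m$ or $q\ge3m-1$ the same two obstructions kill the only candidate, giving $\stHom_A(\nu^iX,\nu^jX)=0$ and hence $(2)$. For $(3)$ with $m\le q\le3m-2$ and $s\ge2$, solvability of $3d\equiv1\ (\mathrm{mod}\ s)$ lets me choose $i\ne j$ with $d=j-i\equiv3^{-1}$, whence $\delta=2m-1\in[n-1-q,n-2]$ (as $q\ge m$) yields $\stHom_A(\nu^iX,\nu^jX)\ne0$; for $s=1$ the orbit is $\{X\}$ and the same term forces $\stHom_A(X,X)\ne k$, so $X$ is not even a brick. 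In each case $O_\nu(X)$ fails to be an orthogonal system.

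I expect the main obstacle to be the bookkeeping for the high rows $q\in\{3m-1,3m\}$: there Lemma \ref{support-of-Dn}(b) carries a parity constraint, and the argument hinges on noticing that the sole in-range candidate $\delta=2m-1$ is odd and is therefore excluded. The delicate supporting point is that $\nu=\tau^{-m_\Delta}$ genuinely preserves each high row, so that $\nu^iX$ and $\nu^jX$ remain in a common row $q$; one must confirm this (and, should the Nakayama functor interchange the two high vertices, track that swap through the orbit) before the clean collapse to a single shift can be invoked.
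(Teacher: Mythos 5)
Your proposal is correct and follows essentially the same route as the paper: decompose $\stHom_A(\nu^iX,\nu^jX)$ through the covering functor for $k(\mathbb{Z}D_{3m})/\langle\tau^{-s(2m-1)}\rangle$, use $\nu\cong\tau^{-m_{\Delta}}$ from [BS], and decide (non)vanishing via Riedtmann's support description in Lemma \ref{support-of-Dn}. Your single arithmetic reduction (the only in-range shift is $\delta=2m-1$, which lies in the low support iff $q\geq m$ and is excluded from the high support by parity) cleanly unifies the paper's separate case analyses, and your flagged worry about a high-vertex swap is harmless since $\nu=\tau^{-(2n-3)}$ is a genuine power of $\tau$ and so preserves rows — the same fact the paper relies on.
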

\begin{proof}
Let  $G$ be the infinite  cyclic group generated by $\tau^{-s(2m-1)}$. Notice that $k(_{s}\Gamma_{A})\cong k(\mathbb{Z}D_{3m})/  \langle\tau^{-s(2m-1)}\rangle$.
 Let $Y$ be a vertex in $_{s}\Gamma_{A}$. From covering theory, we have
 \vspace{0.2cm}
$$\stHom_{A}(X,Y)=\Hom_{k(_{s}\Gamma_{A})}(X,Y)\cong\bigoplus_{g(E)=X, g\in G}\Hom_{k(\mathbb{Z}D_{3m})}(E, Y).$$
Let $\ell(X,Y)$ be the minimal length of all the nontrivial paths from $E$ to $Y$ in $k(\mathbb{Z}D_{3m})$,
where $E$ varies in $\mathbb{Z}D_{3m}$ and satisfies $g(E)=X$ for some $g\in G$.

(1) There are two cases to be considered. For the case $\{(D_{3m},s/3,1)\mid m,s\in \mathbb{N}, m,s\geq2,3\nmid s\}$, by Lemma \ref{Dn-En-stable-brick},  $X$ is a stable brick in $_{s}\Gamma_{A}$ for all $1\leq q \leq 3m$.

For the other case $\{(D_{3m},1/3,1)\}$ for some $m\geq2$.
 We first assume that $q\geq3m-1$, that is, $X$ is a high vertex, which is denoted by $(p,3m-1)$ (resp. $(p,3m)$) for some integer $p$. It is sufficient to show  that except the trivial path from $X$ to $X$,  any path from the  above $E$ to $X$ is zero in $k(\mathbb{Z}D_{3m})$.
Since $s=1$, we know that the above  $E$ is of the form $\tau^{z(2m-1)}(X)$ for some integer $z$. There are three subcases to be considered.

\begin{small}
	\begin{center}
		\vspace{-0.6cm}
		\setlength{\unitlength}{0.35cm}
		\begin{picture}(15,14)
		\put(-10,0){\line(1,0){38.5}}
		\put(-10,10){\line(1,0){38.5}}
		
		\put(-9,5){\line(1,0){0.1}}
		\put(-8.5,5){\line(1,0){0.1}}
		\put(-8,5){\line(1,0){0.1}}
		
		\put(26.5,5){\line(1,0){0.1}}
		\put(27,5){\line(1,0){0.1}}
		\put(27.5,5){\line(1,0){0.1}}
		
		\put(16.15,0){\line(-3,5){5.37}}
		\put(11.5,10){\line(-3,-5){.7}}
		\put(10.88,8.9){\line(1,0){1}}
		
	   \put(-5.35,9){\line(1,0){1}}
		\put(0,0){\line(-3,5){5.4}}
		\put(-4.85,10){\line(-3,-5){.65}}
		
		 \put(3.5,9){\line(1,0){1}}
		\put(9,0){\line(-3,5){5.4}}
		\put(4.15,10){\line(-3,-5){.65}}
		
	\put(11,10){\line(-3,-5){4}}
		\put(9,0){\line(3,5){6}}
		\put(11,10){\line(3,-5){6}}
		\put(17,0){\line(3,5){2}}
		\put(15,10){\line(3,-5){4}}
		\put(8,4.9){\line(3,-5){2}}
		
		\put(9.1,6.7){\line(3,-5){2}}
		\put(10,8.45){\line(3,-5){2}}
		\put(13,10){\line(3,-5){5}}
		
		\put(27.8,0){\line(-3,5){5.4}}
		\put(22.45,8.9){\line(1,0){1}}
		\put(23.1,10){\line(-3,-5){.7}}
		
		\put(3.8,10.3){$(0,3m-1)$}
		\put(5,8.8){$(0,3m)$}
		\put(8,-0.8){$(0,1)$}
		\put(.3,3.2){$X=(0,m-1)$}
		\put(9,3.5){$(2m-1,m)$}

		\put(10.5,10.3){$(2m-1,3m-1)$}
		 \put(12.1,8.8){$(2m-1,3m)$}
		 \put(13,-0.8){$(2m-1,1)$}
		
		\put(23.7,8.8){$(2(2m-1),3m)$}
	  \put(22,10.3){$(2(2m-1),3m-1)$}
		\put(24,-0.8){$(2(2m-1),1)$}
				
		 \put(-8,10.3){$(-(2m-1),3m-1)$}
		\put(-4.2,8.8){$(-(2m-1),3m)$}
		\put(-3.5,-0.8){$(-(2m-1),1)$}
		\end{picture}
	\end{center}
\end{small}
\vspace{.6cm}
\begin{enumerate}[(i)]
\item If $z$ is a negative integer, it is clear that $\Hom_{k(\mathbb{Z}D_{3m})}(E, X)=0$.
\item If $z\geq2$, then $\ell(X,X)$ is greater than or equal to $4(2m-1)$, which is greater than  $m_{D_{3m}}=2\times (3m)-3$. It follows that $\Hom_{k(\mathbb{Z}D_{3m})}(E, X)=0$.
\item If $z=1$, then $E=(-2m+1+p,3m-1)$ (resp. $(-2m+1+p,3m)$). Since $p$ and $-2m+1+p$ do not have the same parity and by the description of support of a high vertex, $\Hom_{k(\mathbb{Z}D_{3m})}(E, X)=0$.
\end{enumerate}
 Then we assume that $1\leq q<m$. From the description of the support of a low vertex, it is easy to see that  $\tau^{z(2m-1)}(X)$ is not in Supp$(\Hom_{k(\mathbb{Z}D_{3m})}(-,(p,q)))$ for any nonzero integer $z$. It follows that $\stHom_{A}(X,X)\cong \Hom_{k(\mathbb{Z}D_{3m})}(X, X)\cong k.$

(2) According to the result in \cite[Proposition 1.5 and 1.6]{BS}, we know  that the Nakayama functor $\nu\cong\tau^{-m_{\Delta}}$ in $k(\mathbb{Z}\Delta)$, where $\Delta$ means Dynkin quiver $A_{n}$, $D_{n}$ or $E_{n}$.
Since $k(_{s}\Gamma_{A})\cong k(\mathbb{Z}D_{3m})/\langle\tau^{-s(2m-1)}\rangle$, $\nu(X)\cong\tau^{-m_{D_{3m}}}(X)\cong\tau^{(zs-3) m_{D_{3m}}/3}(X)=\tau^{(zs-3)(2m-1)}(X)$ in $k(\mathbb{Z}D_{3m})/\langle\tau^{-s(2m-1)}\rangle$ for all integers $z$.
 Moreover, since $3\nmid s$, there is a smallest positive integer $e$ such that $\nu^{e}\cong\tau^{-(2m-1)}$ in $k(\mathbb{Z}D_{3m})/\langle\tau^{-s(2m-1)}\rangle$.
 It follows that $O_{\nu}(X)=\{X, \nu^{e}(X),\cdots,\nu^{(s-1)e}(X)\}=\{X,\tau^{-(2m-1)}(X),\cdots,\tau^{-(s-1)(2m-1)}(X)\}$.

There are two cases as follows. For the case $\{(D_{3m},1/3,1)\}$ for some $m\geq2$, $A$ is a symmetric algebra and $\nu\cong id$. Since $X$ is stable brick for $1\leq q<m$ and $q\geq3m-1$, the $\nu$-orbit $O_{\nu}(X)$ of $X$ is an orthogonal system in $_{s}\Gamma_{A}$.

For the other case $\{(D_{3m},s/3,1)\mid m,s\in \mathbb{N}, m,s\geq2,3\nmid s\}$.
We show that  $\stHom_{A}(\nu^{i}(X),\nu^{j}(X))=0$ for any $i\neq j$, where $0\leq i,j<s$. There are two subcases to be considered.
\begin{enumerate}[(i)]
\item Let $\mathcal{R}_{1}:=\{(i,j)|i\neq j,\ell({\nu^{i}(X)},\nu^{j}(X))=2(2m-1)\}$. Since $\nu^{e}\cong\tau^{-(2m-1)}$,  $\mathcal{R}_{1}$ is not an empty set. Similarly to the proof of (1) of the case  $\{(D_{3m},1/3,1)\}$ for some $m\geq2$, we know that $\stHom_{A}(\nu^{i}(X),\nu^{j}(X))=0$ for any $(i,j)$ in $\mathcal{R}_{1}$.
\item Let $\mathcal{R}:=\{(i,j)\mid i,j$ integers, $i\neq j, 0\leq i,j<s\}$ and $\mathcal{R}'$=$\mathcal{R}\backslash \mathcal{R}_{1}$. It is easy to see that $\ell({\nu^{i}(X)},\nu^{j}(X))\\\geq4(2m-1)$ for any $(i,j)$ in $\mathcal{R}'$, which is greater than $6m-3$. It follows that $\stHom_{A}(\nu^{i}(X),\nu^{j}(X))=0$ for any $(i,j)$ in $\mathcal{R}'$.
\end{enumerate}

(3) From the description of the support of a low vertex,  if $m\leq q<3m-1$, then $\nu^{e}(X)=\tau^{-(2m-1)}(X)=(2m-1+p,q)\in$ Supp$(\Hom_{k(\mathbb{Z}D_{3m})}((p,q),-))$. It follows that $X$ is not a stable brick for the case $\{(D_{3m},1/3,1)\}$ and $\stHom_{A}(X,\nu^{e}(X))\ncong 0$ for  the case $\{(D_{3m},s/3,1)\mid m,s\in \mathbb{N}, m,s\geq2,3\nmid s\}$.
\end{proof}

We now use the above three lemmas to prove the following result, which plays a key role in proving Theorem  \ref{sms-sufficient-condition}.
\begin{Lem}\label{two-side-orthogonal} Let $A$ be an $RFS$ algebra and $\mathcal{S}$ a family of objects in $A$-$\stmod$. If $\mathcal{S}$ satisfies the three conditions in Theorem \ref{sms-sufficient-condition}, then $\mathcal{^{\perp}S^{\perp}}=\{0\}$.
\end{Lem}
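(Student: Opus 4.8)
The plan is to argue by contradiction. Suppose $\mathcal{^{\perp}S^{\perp}}\neq\{0\}$. Since $\mathcal{^{\perp}S^{\perp}}=\mathcal{S^{\perp}}\cap\mathcal{^{\perp}S}$ is closed under direct summands, I may choose a nonzero \emph{indecomposable} $Y\in\mathcal{^{\perp}S^{\perp}}$, that is, a vertex of ${}_{s}\Gamma_{A}$ with $\stHom_{A}(S,Y)=0=\stHom_{A}(Y,S)$ for every $S\in\mathcal{S}$. Because $\nu$ is a self-equivalence of $A$-$\stmod$ and, by condition $(3)$, permutes the objects of $\mathcal{S}$, applying $\nu$ to these vanishing conditions and using $\nu(\mathcal{S})=\mathcal{S}$ shows that $\nu^{i}Y\in\mathcal{^{\perp}S^{\perp}}$ for all $i$; hence the whole orbit satisfies $O_{\nu}(Y)\subseteq\mathcal{^{\perp}S^{\perp}}$. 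Moreover $O_{\nu}(Y)\cap\mathcal{S}=\varnothing$: if some $Z\in O_{\nu}(Y)$ lay in $\mathcal{S}$, then $Z$ would be a stable brick while simultaneously $Z\in\mathcal{S^{\perp}}$ would force $\stHom_{A}(Z,Z)=0$, which is absurd. The strategy is then to upgrade $\mathcal{S}$ to a strictly larger orthogonal system and thereby contradict the cardinality condition $(2)$, using that an orthogonal system in $A$-$\stmod$ cannot have more than $nf$ elements (the number of simple $A$-modules).

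For most types this contradiction is immediate. If $A$ is of type $(A_{2p+1},s,2)$, $D_{n}$ or $E_{n}$ other than $(D_{3m},s/3,1)$, then by Lemma \ref{Dn-En-stable-brick} and Lemma \ref{orthogonal-nu-orbit} the orbit $O_{\nu}(Y)$ consists of stable bricks and is itself an orthogonal system; and if $A$ is of type $(D_{3m},s/3,1)$ while $Y=(p,q)$ is a low vertex ($q<m$) or a high vertex ($q\geq 3m-1$), the same conclusion follows from Lemma \ref{orthogonal-nu-orbit-of-D_{3m}}$(1)$--$(2)$. In all of these situations $O_{\nu}(Y)\subseteq\mathcal{^{\perp}S^{\perp}}$ and $O_{\nu}(Y)\cap\mathcal{S}=\varnothing$, so $\mathcal{S}\cup O_{\nu}(Y)$ is again an orthogonal system, now of cardinality $nf+|O_{\nu}(Y)|>nf$; this violates the cardinality bound and forces $Y=0$.

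It remains to treat the two genuinely exceptional situations, which form the core of the argument. First, when $A$ is of type $(A_{n},s/n,1)$ the stable category is a tube and none of Lemmas \ref{Dn-En-stable-brick}--\ref{orthogonal-nu-orbit-of-D_{3m}} applies. Here I would use the explicit description of $\Hom$-spaces in $k(\mathbb{Z}A_{n})$ from Lemma \ref{Riedtmann-lemma} together with the covering functor to determine exactly which vertices are bricks and how the $nf=s$ members of a Nakayama-stable orthogonal system must be distributed along the tube, and then check by hand that no nonzero indecomposable can be two-sidedly orthogonal to all of them. Second, when $A$ is of type $(D_{3m},s/3,1)$ and $Y$ is a middle vertex ($m\leq q<3m-1$), Lemma \ref{orthogonal-nu-orbit-of-D_{3m}}$(3)$ shows that $O_{\nu}(Y)$ is \emph{not} orthogonal, so the enlargement trick of the previous paragraph is unavailable; instead one argues, via the support computations of Lemma \ref{support-of-Dn}, that such a $Y$ must already receive or emit a nonzero stable homomorphism from or to some element of the (Nakayama-stable, size-$nf$) system $\mathcal{S}$, i.e. $Y\notin\mathcal{^{\perp}S^{\perp}}$.

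I expect the main obstacle to be precisely these exceptional cases, and especially the tube case $(A_{n},s/n,1)$, where neither ``every indecomposable is a brick'' nor ``every $\nu$-orbit is orthogonal'' holds, so that $\mathcal{S}$ cannot simply be enlarged; there all three hypotheses $(1)$, $(2)$, $(3)$ must be used at once and the combinatorics of the covering $k(\mathbb{Z}A_{n})\to A\text{-}\stmod$ has to be pushed through directly. The other delicate point is justifying the cardinality bound ``at most $nf$'' for an arbitrary orthogonal system, so that producing $nf+|O_{\nu}(Y)|$ pairwise-orthogonal bricks is a genuine contradiction; I would establish this through the theory of Riedtmann configurations rather than through Theorem \ref{sms-sufficient-condition} or Theorem 1.2, to avoid circularity.
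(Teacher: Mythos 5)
Your overall architecture (case division by type, enlarging $\mathcal{S}$ by the $\nu$-orbit of an indecomposable in $\mathcal{^{\perp}S^{\perp}}$, and isolating $(A_{n},s/n,1)$ and the middle vertices of $(D_{3m},s/3,1)$ as the hard cases) matches the paper's, but there are two genuine gaps. The first is the cardinality bound you lean on for the ``nice'' types: the claim that an orthogonal system in $A$-$\stmod$ has at most $nf$ elements is exactly what is \emph{not} available at this point. For Nakayama-stable orthogonal systems it is essentially Theorem \ref{orthogonal-system-RFS-extend-to-sms}, whose proof depends on this lemma, so that route is circular, and Riedtmann's configuration theory does not supply it either, since configurations satisfy conditions strictly stronger than orthogonality. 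The paper avoids this entirely: it iterates the enlargement until $^{\perp}{\mathcal{S}_p}^{\perp}=\{0\}$, then uses the torsion pairs of Lemma \ref{two-torsion-pairs} together with Lemma \ref{Nakayaka-stable-conclusion-2} to show $\mathcal{S}_p^{\perp}=\{0\}$, hence $\mathcal{F}(\mathcal{S}_p)=A$-$\stmod$ and $\mathcal{S}_p$ is an sms; only then does it invoke the (previously known, non-circular) fact from \cite{CKL} that an sms has exactly $nf$ elements, contradicting $\mathcal{S}_p\supsetneq\mathcal{S}$. This torsion-pair step is the missing idea in your argument.

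The second gap is that the two exceptional cases are only described, not proved, and they contain essentially all of the work. For $(A_{n},s/n,1)$ the paper shows via Lemma \ref{Riedtmann-lemma} that each $X\in\mathcal{S}$ ``labels'' the entire $\overrightarrow{A_{n}}$-line through it, that orthogonality forces the $s$ elements of $\mathcal{S}$ to lie on $s$ distinct lines, and that $_{s}\Gamma_{A}$ has exactly $s$ lines, so every vertex is labelled. For $(D_{3m},s/3,1)$ the key intermediate step is the claim that $\mathcal{S}$ contains \emph{precisely one} $\nu$-orbit of high vertices, proved by counting $\overrightarrow{D_{3m}}$-lines (each low vertex labels two lines, there are only $(2m-1)s$ lines, so $ms$ low vertices cannot exhaust $\mathcal{S}$, and two distinct high $\nu$-orbits would fail orthogonality); only with this claim in hand can one show all $(2m-1)s$ lines are labelled. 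Your sketch of ``check by hand'' and ``argue via the support computations'' does not reach these statements. Finally, the non-standard algebras of type $(D_{3m},1/3,1)$ require a separate reduction to the standard counterpart via the well-behaved covering functors, which your proposal omits.
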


\begin{proof}
First we choose a $\overrightarrow{\Delta}$-line in $\mathbb{Z}\Delta$ for $\Delta$ of types $A_n$ and $D_n$ as the figure (2.1) in Subsection 2.2. Notice that the above $\overrightarrow{\Delta}$-lines are different from the $\overrightarrow{\Delta}$-lines in Lemma \ref{Dn-En-stable-brick}.
Let $\mathcal{T}_{X}$ be the set of  modules in the $\overrightarrow{\Delta}$-line containing $X$ in stable Auslander-Reiten quiver  $_{s}\Gamma_{A}$, and let $\mathcal{T}_{\mathcal{C}}$=$\cup_{c\in\mathcal{C}}$ $\mathcal{T}_{c}$ for a set $\mathcal{C}$ of objects in $A$-$\stind$. Given a set $\mathcal{C}$ of  modules and an indecomposable module $Y$ in $A$-$\stmod$, we say that $\mathcal{C}$ labels $\mathcal{T}_{Y}$ if any object of $\mathcal{T}_{Y}$ is not in $\mathcal{^{\perp}C^{\perp}}$, moreover, a module $X$ in $A$-$\stmod$ labels $\mathcal{T}_{Y}$ if any object of $\mathcal{T}_{Y}$ is not in $^{\perp}X^{\perp}$. There are four cases to be considered.

{\it Case 1.} Type $(A_{n},s/n,1)$ for $n,s\in\mathbb{N}$.

Notice that $A$ is a self-injective Nakayama algebra in this case. The stable Auslander-Reiten quiver   $_{s}\Gamma_{A}$ is of the form $\mathbb{Z}A_{n}/\langle\tau^{-s}\rangle$ for type $(A_{n},s/n,1)$ for $n,s\in\mathbb{N}$. Notice that $s$ is the number of simple modules. We know that the set of vertices in $_{s}\Gamma_{A}$  is the union of $s$ $\overrightarrow{A_{n}}$-lines.
 From the description of support of any indecomposable module in $A$-$\stmod$ (see  Lemma \ref{Riedtmann-lemma}), for an object $X$ in $\mathcal{S}$, any object of $\mathcal{T}_{X}$ is in Supp$(\stHom_{A}(X,-))\ \cup$ Supp$(\stHom_{A}(-,X))$.
 Since $\mathcal{S}$ is an orthogonal system, the $s$ objects in $\mathcal{S}$ label $s$ different $\overrightarrow{A_{n}}$-lines, it follows that $\mathcal{T}_{\mathcal{S}}$ covers the whole stable Auslander-Reiten quiver of $A$, then $\mathcal{^{\perp}S^{\perp}}=\{0\}$ for this type.

 {\it Case 2.} Type $(D_{3m},s/3,1)$ for $m,s\in \mathbb{N},m\geq2, 3\nmid s$.

 We note that the stable Auslander-Reiten quiver $_{s}\Gamma_{A}$ is of the form $\mathbb{Z}D_{3m}/\langle\tau^{-(2m-1)s}\rangle$ for the type $\{(D_{3m},s/3,1)\mid m,s\in\mathbb{N}, m\geq2,3\nmid s\}$, so the set of vertices in $_{s}\Gamma_{A}$ is the union of $(2m-1)s$ different $\overrightarrow{D_{3m}}$-lines. We have the following claim.

{\bf Claim$\colon$} There is precisely one $\nu$-orbit of a  high vertex in  $\mathcal{S}$.  \  \   \  \  \   \  \  \  \   \  \  \   \  \  \   \  \  \  \   \ \  \   \  \  \   \  \  \  \   \ \  \   \  \  \   \  \  \  \   \ \  \   \  \  \   \  \  \  \   \  ($\bigstar$)

Indeed, by Lemma \ref{orthogonal-nu-orbit-of-D_{3m}}, any low vertex $(p,q)$ with $m\leq q<3m-1$ is not in $\mathcal{S}$. Suppose that all objects in $\mathcal{S}$ are not high vertices. Therefore any vertex $Z$ is of the form $(p,q)$ with $p,q$ integers, $1\leq q<m$.
For a vertex $Z=(p,q)$ with $1\leq q<m$, from the description of support of a low vertex, we know that $\mathcal{T}_{Z}$ and $\mathcal{T}_{Z_{1}}$ are in  Supp$(\stHom_{A}(-,Z))\ \cup$ Supp$(\stHom_{A}(Z,-))$, where $Z_{1}=(p+q-3m+1,1)$ (see the picture below).
 Therefore the  vertex $Z$ labels $\mathcal{T}_{Z}$ and $\mathcal{T}_{Z_{1}}$. Since $2m-1\nmid p-(p+q-3m+1)$, $\mathcal{T}_{Z}$ and $\mathcal{T}_{Z_{1}}$ are different $\overrightarrow{D_{3m}}$-lines in $_{s}\Gamma_{A}$.
And since $\mathcal{S}$ is an orthogonal system, $ms$ objects in $\mathcal{S}$ label  $2ms$ different $\overrightarrow{D_{3m}}$-lines in $_{s}\Gamma_{A}$. It contradicts  the fact that $_{s}\Gamma_{A}$ only has $(2m-1)s$ different $\overrightarrow{D_{3m}}$-lines. Therefore by Lemma \ref{orthogonal-nu-orbit-of-D_{3m}}, there is at least one $\nu$-orbit of high vertex in $\mathcal{S}$.

\begin{small}
\begin{center}
\vspace{-1.25cm}
\setlength{\unitlength}{0.35cm}
\begin{picture}(15,14)
 \put(-3,0){\line(1,0){23}}
\put(-3,10){\line(1,0){23}}
\put(5,0){\line(3,5){6}}
\put(5,0){\line(-3,5){3.41}}
\put(4.2,10){\line(-3,-5){2.58}}
\put(4.2,10){\line(3,-5){6}}
\put(10.15,0){\line(3,5){3.45}}
\put(11,10){\line(3,-5){2.57}}
\put(13.6,5.60){$Z=(p,q)$}
\put(1.5,-0.8){$Z_1=(p+q+1-3m,1)$}
\end{picture}
\end{center}
\end{small}
\vspace{0.3cm}

We assume that one of the high vertices is given by $X=(p,3m-1)$ (resp. $(p,3m)$) for some integer $p$. From the proof of Lemma \ref{orthogonal-nu-orbit-of-D_{3m}}, there is a smallest positive integer $e$ such that $\nu^{e}\cong\tau^{-(2m-1)}$, therefore $\nu^{e}(X)=(2m-1+p ,3m-1)$ (resp. $(2m-1+p ,3m)$).
Since $p$ and $2m-1+p$ do not have the same parity and by the description of  support of a high vertex, any high vertex between $X$ and $\nu^{e}(X)$ in $_{s}\Gamma_{A}$, which is $(k,3m-1)$ or $(k,3m)$ with $\overline{p}<\overline{k}<\overline{2m-1+p}$ for some integer $k$, is in Supp$(\stHom_{A}(X,-))\ \cup\ $Supp$(\stHom_{A}(-,\nu^{e}(X)))$.
Suppose now that there is a $\nu$-orbit $O_{\nu}(W)$ of a high vertex $W$ in $\mathcal{S}\backslash O_{\nu}(X)$. Then there is a high vertex $W'$ in $O_{\nu}(W)$  which is between $X$ and $\nu^{e}(X)$ in $_{s}\Gamma_{A}$. It is a contradiction.
Hence there is precisely one $\nu$-orbit of a high vertex in $\mathcal{S}$.

Now we show that $\mathcal{^{\perp}S^{\perp}}=\{0\}$. It suffices to show that the $ms$ objects in $\mathcal{S}$ label $(2m-1)s$ different $\overrightarrow{D_{3m}}$-lines in $_{s}\Gamma_{A}$.
Let $O_{\nu}(X)$ be the only $\nu$-orbit for high vertex $X=(p,3m)$ (resp.  $(p,3m-1)$) for some integer $p$ in $\mathcal{S}$. Since the number of elements in $O_{\nu}(X)$  is $s$, the number of low vertices in  $\mathcal{S}$ is $(m-1)s$.
From the proof of the first half part of ($\bigstar$),  $(m-1)s$ low vertices label $2(m-1)s$ different $D_{3m}$-lines in $_{s}\Gamma_{A}$.
From the proof of the second half  part of ($\bigstar$),
we have $\stHom_{A}(X',\nu^{e}(X))\neq 0$ by the covering theory and the description of support of a high vertex, where $X'=(p,3m)$ (resp. $(p,3m-1)$).
Therefore $X'\in$ Supp$(\stHom_{A}(-,\nu^{e}(X)))$, and $O_{\nu}(X)$ labels $\mathcal{T}_{X}$, we can see that the $s$ objects in $O_{\nu}(X)$ label $s$ different $\overrightarrow{D_{3m}}$-lines in $_{s}\Gamma_{A}$.
Since $\mathcal{S}$ is an orthogonal system,
the $s$ $\overrightarrow{D_{3m}}$-lines corresponding to the $\nu$-orbit for a high vertex $X$ and the $2(m-1)s$ $\overrightarrow{D_{3m}}$-lines corresponding to low vertices are different. Therefore $\mathcal{^{\perp}S^{\perp}}=\{0\}$.
 
{\it Case 3.} For the other types in standard case.

 We assume that $\mathcal{^{\perp}S^{\perp}}\neq\{0\}$.
Take a nonzero indecomposable module $M\in \mathcal{^{\perp}S^{\perp}}$,
and let $\mathcal{S}_{1}:=\mathcal{S}\cup \{O_{\nu}(M)\}$. By Lemma \ref{orthogonal-nu-orbit}, $\mathcal{S}_{1}$ is a Nakayama-stable orthogonal system in $A$-$\stmod$. If $^{\perp}{\mathcal{S}_1}^{\perp}\neq\{0\}$, then we proceed the above step. Since $A$ is of finite representation type,
 there is a positive integer $p$ such that  $^{\perp}{\mathcal{S}}_p^{\perp}=\{0\}$. In particular, $\mathcal{S}_p$ is a Nakayama-stable orthogonal system containing $\mathcal{S}$ properly in $A$-$\stmod$.
 By Lemma \ref{two-torsion-pairs}, both $(\mathcal{^\perp {S}}_p, \mathcal{F}(\mathcal{S}_p))$ and $(\mathcal{F}(\mathcal{S}_p),\mathcal{S}_p^{\perp})$ are torsion pairs. Let $Y$ be an  object in $A$-$\stmod$. Consider a minimal $(\mathcal{^\perp {S}}_p, \mathcal{F}(\mathcal{S}_p))$-triangle,
$$\mathbf{a}Y\longrightarrow Y\longrightarrow \mathbf{b}Y\longrightarrow \mathbf{a}Y[1].$$
 Suppose that $0\neq Y\in\mathcal{S}_p^{\perp}$, by Lemma \ref{Nakayaka-stable-conclusion-2}, $\mathbf{a}Y\in {^{\perp}}{\mathcal{S}}_p^{\perp}=\{0\}$. Therefore $\mathbf{a}Y=0$ and $Y\cong\mathbf{b}Y\in  \mathcal{F}(\mathcal{S}_p)$.
Since $Y\in\mathcal{S}_p^{\perp},$ $\stHom_{A}(Y,Y)=0$, then $Y=0$, it is a contradiction.
Since $(\mathcal{F}(\mathcal{S}_p)$, $\mathcal{S}_p^{\perp})$ is a torsion pair,  $\mathcal{F}(\mathcal{S}_p)= A$-$\stmod$ and $\mathcal{S}_p$ is an sms. By the necessary conditions on $\mathcal{S}$ to be an sms from Subsection 2.3, the number of objects of  $\mathcal{S}_p$  must be the number of non-isomorphic simple modules. Then $\mathcal{S}_p = \mathcal{S}$ and it contradicts  our assumption $\mathcal{S}_p \supsetneq \mathcal{S}$. Hence $^{\bot}\mathcal{S}^{\bot} = {0}$.

{\it Case 4.} Type $(D_{3m},1/3,1)$ ($m\geq2$) in non-standard case.

Recall from Remark \ref{standard-non-standard-pair} that for a non-standard RFS algebra $A$ of type $(D_{3m},1/3,1)$, there is a standard counterpart $A_s$ with the same type, such that there is a bijection $A$-$\stind$ $\leftrightarrow$ $A_s$-$\stind$ between the set of indecomposable objects and irreducible morphisms, which is compatible with the position on the stable Auslander-Reiten quiver
$\Z D_{3m}/\langle \tau^{2m-1}\rangle$. Moreover, there are well-behaved functors $F\colon k(\Z D_{3m})\rightarrow$ $A$-$\stind$ and $F_s\colon k(\Z D_{3m})\rightarrow$ $A_s$-$\stind$ which are compatible with the above bijection $A$-$\stind$ $\leftrightarrow$ $A_s$-$\stind$ (cf. \cite[Section 4]{CKL}). Let $\mathcal{S}$ be  an orthogonal system in $A_s$-$\stind$. Since the well-behaved functors are covering functors, by the formulas on covering functors in Subsection 2.2, $F( F_s^{-1}(\mathcal{S}))$ is an orthogonal system in $A$-$\stind$. Similarly, if $\mathcal{S'}$ is an  orthogonal system in $A$-$\stind$, then $F_s( F^{-1}(\mathcal{S'}))$ is an  orthogonal system in $A_s$-$\stind$. This implies that there is a one to one correspondence between the set of orthogonal systems in $A$-$\stind$ and that in $A_s$-$\stind$. It follows that we also have $\mathcal{^{\perp}S^{\perp}}=\{0\}$ in this case.
\end{proof}

\begin{proof}[{\bf Proof of Theorem 3.1}]
 Necessity is clear from Subsection 2.3, so now we assume that $\mathcal{S}$ satisfies the conditions $(1),(2)$ and $(3)$. Since $\mathcal{S}$ is an orthogonal system, by Lemma \ref{two-torsion-pairs}, both $(\mathcal{^\perp S}, \mathcal{F}(\mathcal{S}))$ and $(\mathcal{F}(\mathcal{S}),\mathcal{S^{\perp}})$ are torsion pairs. Let $Y$ be an  object in $A$-$\stmod$.
 Consider a minimal $(\mathcal{^\perp S}, \mathcal{F}(\mathcal{S}))$-triangle,
$$\mathbf{a}Y\longrightarrow Y\longrightarrow \mathbf{b}Y\longrightarrow \mathbf{a}Y[1].$$
Suppose that $0\neq Y\in\mathcal{S^{\perp}}$, by
 Lemma \ref{Nakayaka-stable-conclusion-2}, $\mathbf{a}Y\in\mathcal{^{\perp}S^{\perp}}$. According to Lemma \ref{two-side-orthogonal}, $\mathbf{a}Y=0$ and $Y\cong \mathbf{b}Y\in  \mathcal{F}(\mathcal{S})$. Since  $Y\in\mathcal{S^{\perp}},$ this implies that $\stHom_{A}(Y,Y)=0$, and so $Y=0$, it is a contradiction, therefore $\mathcal{S^{\perp}}=\{0\}$.
Since $(\mathcal{F}(\mathcal{S})$, $\mathcal{S^{\perp}})$ is a torsion pair, we have $\mathcal{F}(\mathcal{S})= A$-$\stmod$ and therefore $\mathcal{S}$ is an sms.
\end{proof}

We have the following immediate consequence from the proof of Theorem \ref{sms-sufficient-condition}.

\begin{Cor}\label{symmetric-case}Let $A$ be an RFS algebra and $\mathcal{S}$ a Nakayama-stable orthogonal system in $A$-$\stmod$. If  $\mathcal{^{\perp}S^{\perp}}=\{0\}$, then $\mathcal{S}$ is an sms.
\end{Cor}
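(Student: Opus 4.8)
The plan is to re-run the argument in the proof of Theorem \ref{sms-sufficient-condition} essentially verbatim, with one substitution: the step in which conditions $(1)$--$(3)$ of that theorem are used to deduce $\mathcal{^{\perp}S^{\perp}}=\{0\}$ via Lemma \ref{two-side-orthogonal} is now replaced by the standing hypothesis $\mathcal{^{\perp}S^{\perp}}=\{0\}$ itself. First I would set up the torsion pairs. Since $\mathcal{S}$ is an orthogonal system over an RFS algebra, the extension closure $\mathcal{F}(\mathcal{S})$ is functorially finite in $A$-$\stmod$ (as noted in the remark following Lemma \ref{two-torsion-pairs}), so Lemma \ref{two-torsion-pairs} applies and both $(\mathcal{^\perp S}, \mathcal{F}(\mathcal{S}))$ and $(\mathcal{F}(\mathcal{S}),\mathcal{S^{\perp}})$ are torsion pairs in $A$-$\stmod$. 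This is the only place where representation-finiteness enters, and it does so solely to guarantee functorial finiteness.

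Next I would show that $\mathcal{S^{\perp}}=\{0\}$. Fix an object $Y\in\mathcal{S^{\perp}}$ and take the minimal $(\mathcal{^\perp S}, \mathcal{F}(\mathcal{S}))$-triangle $\mathbf{a}Y\to Y\to \mathbf{b}Y\to \mathbf{a}Y[1]$. Because $\mathcal{S}$ is Nakayama-stable, i.e. $\nu(\mathcal{S})=\mathcal{S}$, the standing hypotheses of Lemma \ref{Nakayaka-stable-conclusion-2} are all in force, and part $(3)$ of that lemma yields $\mathbf{a}Y\in\mathcal{^{\perp}S^{\perp}}$. The assumed vanishing $\mathcal{^{\perp}S^{\perp}}=\{0\}$ then forces $\mathbf{a}Y=0$, whence $Y\cong\mathbf{b}Y\in\mathcal{F}(\mathcal{S})$.

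To conclude I would invoke both torsion properties together: $Y$ now lies in $\mathcal{F}(\mathcal{S})\cap\mathcal{S^{\perp}}$, and since $(\mathcal{F}(\mathcal{S}),\mathcal{S^{\perp}})$ is a torsion pair we have $\stHom_A(\mathcal{F}(\mathcal{S}),\mathcal{S^{\perp}})=0$; applying this to $Y$ against itself gives $\stHom_A(Y,Y)=0$, so $Y=0$. Thus $\mathcal{S^{\perp}}=\{0\}$, and the defining property of the torsion pair $(\mathcal{F}(\mathcal{S}),\mathcal{S^{\perp}})$ then gives $A\text{-}\stmod=\mathcal{F}(\mathcal{S})\ast\mathcal{S^{\perp}}=\mathcal{F}(\mathcal{S})$. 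This is exactly the generating condition, which together with the orthogonality already assumed shows that $\mathcal{S}$ is an sms.

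Since every step merely quotes machinery established earlier, there is no genuine obstacle; the only point requiring care is checking that the hypotheses needed to apply Lemma \ref{Nakayaka-stable-conclusion-2}$(3)$ are all present here, namely that $\mathcal{S}$ is an orthogonal system, that $\nu(\mathcal{S})=\mathcal{S}$, and that both torsion pairs exist. This is precisely why the corollary retains the orthogonality and Nakayama-stability assumptions while trading the cardinality condition $(2)$ of Theorem \ref{sms-sufficient-condition} for the direct hypothesis $\mathcal{^{\perp}S^{\perp}}=\{0\}$, which in the proof of the theorem was delivered as the conclusion of Lemma \ref{two-side-orthogonal}.
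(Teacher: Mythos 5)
Your proof is correct and is essentially the paper's own argument: the paper derives this corollary as an immediate consequence of the proof of Theorem \ref{sms-sufficient-condition}, exactly by replacing the appeal to Lemma \ref{two-side-orthogonal} with the hypothesis $\mathcal{^{\perp}S^{\perp}}=\{0\}$ and then running the same torsion-pair argument (Lemma \ref{two-torsion-pairs}, Lemma \ref{Nakayaka-stable-conclusion-2}(3), and the vanishing of $\stHom_A(Y,Y)$ for $Y\in\mathcal{F}(\mathcal{S})\cap\mathcal{S^{\perp}}$) to conclude $\mathcal{S^{\perp}}=\{0\}$ and hence $\mathcal{F}(\mathcal{S})=A$-$\stmod$.
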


\subsection{Extendible Nakayama-stable orthogonal systems} In this subsection, we prove the following extendible property of Nakayama-stable orthogonal systems for RFS algebras.

\begin{Thm}\label{orthogonal-system-RFS-extend-to-sms} Let $A$ be an RFS algebra. Then every Nakayama-stable orthogonal system $\mathcal{S}$ in $A$-$\stmod$ extends to an sms.
\end{Thm}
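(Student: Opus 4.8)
The plan is to grow $\mathcal{S}$ by repeatedly adjoining $\nu$-orbits of indecomposables taken from $\mathcal{^{\perp}S^{\perp}}$, stopping when this double-orthogonal category becomes zero, and then invoke Corollary \ref{symmetric-case}. Suppose $\mathcal{S}$ is a Nakayama-stable orthogonal system with $\mathcal{^{\perp}S^{\perp}}\neq\{0\}$, and suppose we can find an indecomposable $M\in\mathcal{^{\perp}S^{\perp}}$ whose $\nu$-orbit $O_{\nu}(M)$ is itself an orthogonal system (this existence is the crux, addressed below). Put $\mathcal{S}_{1}:=\mathcal{S}\cup O_{\nu}(M)$. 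The internal orthogonality and stable-brick property of $O_{\nu}(M)$ hold by hypothesis; orthogonality between $O_{\nu}(M)$ and $\mathcal{S}$ follows because $M\in\mathcal{^{\perp}S^{\perp}}$ and $\nu$ is a self-equivalence permuting $\mathcal{S}$, so $\stHom_{A}(\nu^{j}M,S)\cong\stHom_{A}(M,\nu^{-j}S)=0$ with $\nu^{-j}S\in\mathcal{S}$, and symmetrically for $\stHom_{A}(S,\nu^{j}M)$. Since $M\in\mathcal{S^{\perp}}$ forces $\stHom_{A}(S,M)=0$, while $M$ is a brick, we have $M\notin\mathcal{S}$, so $\mathcal{S}_{1}\supsetneq\mathcal{S}$ is again a Nakayama-stable orthogonal system. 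As $A$ is representation-finite, iterating terminates at a Nakayama-stable orthogonal system $\mathcal{S}_{p}\supseteq\mathcal{S}$ with $\mathcal{^{\perp}S}_{p}^{\perp}=\{0\}$, and Corollary \ref{symmetric-case} then shows $\mathcal{S}_{p}$ is an sms extending $\mathcal{S}$.

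Thus the theorem reduces to the single assertion: \emph{if $\mathcal{S}$ is Nakayama-stable orthogonal with $\mathcal{^{\perp}S^{\perp}}\neq\{0\}$, then $\mathcal{^{\perp}S^{\perp}}$ contains an indecomposable $M$ with $O_{\nu}(M)$ an orthogonal system.} For the types $(A_{2p+1},s,2)$, $(D_{n},s,1)$, $(D_{n},s,2)$, $(D_{4},s,3)$, $(E_{n},s,1)$ and $(E_{6},s,2)$ this is immediate from Lemma \ref{orthogonal-nu-orbit}: there the $\nu$-orbit of \emph{every} indecomposable is an orthogonal system, so since $\mathcal{^{\perp}S^{\perp}}$ is closed under direct summands any nonzero indecomposable summand $M$ of an object of $\mathcal{^{\perp}S^{\perp}}$ works, and the iteration goes through verbatim. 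This leaves exactly the two standard types not covered by Lemma \ref{orthogonal-nu-orbit}, namely the self-injective Nakayama type $(A_{n},s/n,1)$ and the type $(D_{3m},s/3,1)$, together with the non-standard algebra of type $(D_{3m},1/3,1)$.

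The main obstacle is that in these remaining types not every indecomposable has an orthogonal $\nu$-orbit; in particular, by Lemma \ref{orthogonal-nu-orbit-of-D_{3m}}(3) a \emph{bad} low vertex $(p,q)$ with $m\leq q<3m-1$ in $_{s}\Gamma_{A}$ has a non-orthogonal orbit and can never be adjoined, so I must rule out the possibility that $\mathcal{^{\perp}S^{\perp}}$ consists only of such vertices. My plan is to reuse the support bookkeeping already developed in the proof of Lemma \ref{two-side-orthogonal}: for type $(D_{3m},s/3,1)$ I would argue as in Case~2 there, using the support descriptions of Lemma \ref{support-of-Dn} and the covering formulas of Subsection 2.2 to show that the $\overrightarrow{D_{3m}}$-lines on which $\mathcal{S}$ can act (the labelled lines) are exhausted only by the good orbits of $\mathcal{S}$, so that a nonzero $\mathcal{^{\perp}S^{\perp}}$ leaves an unlabelled region whose support geometry produces a high vertex $H$ whose support and co-support cones lie inside those of a given bad low vertex, forcing $H\in\mathcal{^{\perp}S^{\perp}}$ to be a good addable vertex; the analogous (simpler) line-counting of Case~1 settles type $(A_{n},s/n,1)$, where the simple modules are always orthogonal stable bricks. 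Finally, the non-standard algebra of type $(D_{3m},1/3,1)$ is transported to its standard counterpart $A_{s}$ via the well-behaved covering functors as in Case~4 of that proof, the orthogonal-system correspondence carrying a good addable vertex for $A_{s}$ back to one for $A$. Establishing cleanly that a nonzero $\nu$-stable, extension- and summand-closed $\mathcal{^{\perp}S^{\perp}}$ cannot consist of bad low vertices alone is the delicate technical point, and it is where the bulk of the work lies.
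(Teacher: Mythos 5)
Your overall skeleton is exactly the paper's: adjoin $\nu$-orbits of indecomposables taken from $\mathcal{^{\perp}S^{\perp}}$, terminate by representation-finiteness, and invoke Corollary \ref{symmetric-case}; the types covered by Lemma \ref{orthogonal-nu-orbit} are indeed dispatched immediately, and your reduction of the whole theorem to the existence of a ``good addable vertex'' in a nonzero $\mathcal{^{\perp}S^{\perp}}$ is the right reduction. But that existence statement is precisely the content of Lemmas \ref{orthogonal-system-of-A_n-extend-to-sms-1} and \ref{orthogonal-system-of-D_n-extend-to-sms-1}, and your sketch of it has a genuine gap in the Nakayama case. The line-counting of Case~1 of Lemma \ref{two-side-orthogonal} proves only the implication ``all $s$ lines labelled $\Rightarrow\mathcal{^{\perp}S^{\perp}}=\{0\}$''; when $|\mathcal{S}|<s$ it tells you some line is unlabelled, but ``unlabelled'' does not mean that any particular module on that line (a simple module, say) lies in $\mathcal{^{\perp}S^{\perp}}$ --- labelling is a sufficient, not a necessary, condition for non-orthogonality. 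So line-counting produces no addable vertex. What the paper actually does for type $(A_n,s/n,1)$ is (i) reduce to a symmetric Nakayama algebra via the covering $_s\Gamma_A\to{_s\Gamma_A}/\langle\nu\rangle$ (which also disposes of the worry that the $\nu$-orbit of a stable brick need not be orthogonal when $A$ is not symmetric), and then (ii) given a non-brick $X_i(as+b)\in\mathcal{^{\perp}S^{\perp}}$, exhibit an explicit non-split triangle $X_i(as+b)\to X_i(2as+b)\oplus X_i(b)\to X_i(as+b)\to$ (or its variant when $n<2as+b$) whose end terms lie in $\mathcal{^{\perp}S^{\perp}}$, so that the middle term and hence its stable-brick summand $X_i(b)$ (resp.\ $X_i(n-s+b)$) lie there too. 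This extension trick is the missing idea; nothing in your proposal supplies it.

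For type $(D_{3m},s/3,1)$ your plan is essentially the paper's (Lemma \ref{orthogonal-system-of-D_n-extend-to-sms-1}): a bad low vertex $X=(1,t)$ with $m\le t<3m-1$ in $\mathcal{^{\perp}S^{\perp}}$ satisfies $^{\perp}X^{\perp}\subseteq{^{\perp}S_{0}^{\perp}}$ for the high vertex $S_{0}=(1,3m)$, whence $S_{0}\in\mathcal{^{\perp}S^{\perp}}$ is addable; and once a high vertex is present, the new double perpendicular is contained in $^{\perp}S_{0}^{\perp}$, which consists only of stable bricks, so the iteration can never get stuck again --- a point you should state explicitly, since it is what makes ``addable'' propagate through the rest of the induction. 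But you only announce this support computation rather than carry it out, and you yourself flag the crux as ``where the bulk of the work lies''. As it stands the proposal is a correct strategy with the two decisive verifications --- the Nakayama triangle argument and the $D_{3m}$ support inclusions --- left unproved, and in the Nakayama case the tool you name would not prove it.
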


\begin{Proof} This is a consequence of the following three lemmas$\colon$ \ref{orthogonal-system-extend-to-sms}, \ref{orthogonal-system-of-A_n-extend-to-sms-1}, \ref{orthogonal-system-of-D_n-extend-to-sms-1}.

\end{Proof}

\begin{Lem}\label{orthogonal-system-extend-to-sms} Let $A$ be an RFS algebra of type $(A_{2p+1},s,2)$, $D_{n}$ {\rm(except $\{(D_{3m},s/3,1)$ with $ m,s\in \mathbb{N},m\geq2,3\nmid s\}$)} or $E_{n}$. Then every Nakayama-stable orthogonal system $\mathcal{S}$ in $A$-$\stmod$ extends to an sms.
\end{Lem}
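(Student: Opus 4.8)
The plan is to reduce the statement to Corollary \ref{symmetric-case} by repeatedly enlarging $\mathcal{S}$ with Nakayama orbits of modules in its two-sided orthogonal complement, until that complement vanishes. If $\mathcal{^{\perp}S^{\perp}}=\{0\}$ already holds, then $\mathcal{S}$ is itself an sms by Corollary \ref{symmetric-case} and there is nothing to prove. Otherwise I would choose a nonzero indecomposable module $M\in\mathcal{^{\perp}S^{\perp}}$ and set $\mathcal{S}_{1}:=\mathcal{S}\cup O_{\nu}(M)$, aiming to show that this strictly larger family is still a Nakayama-stable orthogonal system.

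The key point is the verification that $\mathcal{S}_{1}$ is again a Nakayama-stable orthogonal system. Since the types under consideration are exactly those covered by Lemma \ref{Dn-En-stable-brick} and Lemma \ref{orthogonal-nu-orbit}, the module $M$ is a stable brick and its $\nu$-orbit $O_{\nu}(M)$ is itself an orthogonal system. Orthogonality between the new orbit and the old system then follows from $M\in\mathcal{^{\perp}S^{\perp}}$ together with the Nakayama-stability of $\mathcal{S}$: for each $S\in\mathcal{S}$ and each integer $i$, applying the self-equivalence $\nu^{-i}$ gives $\stHom_{A}(\nu^{i}M,S)\cong\stHom_{A}(M,\nu^{-i}S)$ and $\stHom_{A}(S,\nu^{i}M)\cong\stHom_{A}(\nu^{-i}S,M)$, and since $\nu^{-i}S\in\mathcal{S}=\nu^{-i}\mathcal{S}$ both sides vanish by the defining property of $\mathcal{^{\perp}S^{\perp}}$. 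As $O_{\nu}(M)$ is manifestly Nakayama-stable and $M\notin\mathcal{S}$ (because $\stHom_{A}(M,M)\cong k\neq 0$ while $\stHom_{A}(M,S)=0$ for all $S\in\mathcal{S}$), the family $\mathcal{S}_{1}$ is a Nakayama-stable orthogonal system containing $\mathcal{S}$ properly.

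I would then iterate: if $^{\perp}\mathcal{S}_{1}^{\perp}\neq\{0\}$, repeat the construction to obtain $\mathcal{S}_{2}\supsetneq\mathcal{S}_{1}$, and so on. Because $A$ is representation-finite, there are only finitely many indecomposable objects in $A$-$\stmod$, so each enlargement strictly increases a bounded cardinality and the process must terminate after finitely many steps with a Nakayama-stable orthogonal system $\mathcal{S}_{p}\supseteq\mathcal{S}$ satisfying $^{\perp}\mathcal{S}_{p}^{\perp}=\{0\}$. Corollary \ref{symmetric-case} then shows that $\mathcal{S}_{p}$ is an sms, and it extends $\mathcal{S}$ by construction.

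The main obstacle, and the reason this lemma is restricted to these types, is the orthogonality bookkeeping in the enlargement step: one needs simultaneously that every indecomposable is a stable brick and that a full $\nu$-orbit remains an orthogonal system. These are precisely the contents of Lemma \ref{Dn-En-stable-brick} and Lemma \ref{orthogonal-nu-orbit}, and the latter genuinely fails in the excluded type $(D_{3m},s/3,1)$ (compare Lemma \ref{orthogonal-nu-orbit-of-D_{3m}}(3)), where adjoining $O_{\nu}(M)$ need not preserve orthogonality. Once these two inputs are available, the termination argument and the final application of Corollary \ref{symmetric-case} are routine.
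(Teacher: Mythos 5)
Your proposal is correct and follows essentially the same route as the paper: take $M\in\mathcal{^{\perp}S^{\perp}}$, adjoin $O_{\nu}(M)$ using Lemma \ref{orthogonal-nu-orbit}, iterate until $^{\perp}\mathcal{S}_{q}^{\perp}=\{0\}$ by representation-finiteness, and conclude via Corollary \ref{symmetric-case}. Your explicit verification that the new orbit is orthogonal to $\mathcal{S}$ (via $\stHom_{A}(\nu^{i}M,S)\cong\stHom_{A}(M,\nu^{-i}S)$ and Nakayama-stability) is a detail the paper leaves implicit, but the argument is the same.
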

\begin{proof}
By Corollary \ref{symmetric-case}, if $\mathcal{^{\perp}S^{\perp}}=\{0\}$, then $\mathcal{S}$ is an sms. Otherwise $\mathcal{^{\perp}S^{\perp}}\neq\{0\}$, take a nonzero indecomposable module $M\in \mathcal{^{\perp}S^{\perp}}$.
Let $\mathcal{S}_{1}:=\mathcal{S}\cup \{O_{\nu}(M)\}$. By Lemma \ref{orthogonal-nu-orbit}, $\mathcal{S}_{1}$ is a Nakayama-stable orthogonal system in $A$-$\stmod$. If $^{\perp}{\mathcal{S}_1}^{\perp}=\{0\}$, then $\mathcal{S}_{1}$ is an sms.
Otherwise $^{\perp}{\mathcal{S}_1}^{\perp}\neq \{0\}$, and we can similarly get a Nakayama-stable orthogonal system $\mathcal{S}_{2}$ containing $\mathcal{S}_{1}$ properly. Repeat the above process, since $A$ is of finite representation type,
 there is a positive integer $q$ such that  $^{\perp}{\mathcal{S}_q}^{\perp}=\{0\}$, and $\mathcal{S}_{q}$ is an sms.
\end{proof}

The next lemma deals with RFS algebras of type $\{(A_{n},s/n,1)\mid n,s\in\mathbb{N}\}$, that is, the self-injective Nakayama algebras. We first recall some notations and results on self-injective Nakayama algebras (cf. \cite[Section V]{ASS}).

The self-injective Nakayama algebra $A$ with $s$ simple modules and Loewy length $n+1$ is defined by the following quiver $Q$
$$\xymatrix@rd{
1\ar@/^/[r]&2\ar@/^/[d]\\
s\ar@/^/[u]&\cdots\ar@/^/[l] }$$ with admissible ideal $I=rad^{n+1}(kQ)$.
Notice that if $A$ is symmetric Nakayama, then $n=ms$ for some positive integer $m$.
 Let $X_1,X_2,\cdots,X_s$ be all the simple $A$-modules. Then $\tau X_{i}=X_{\overline{i+1}}$, where $\tau$ is the AR-translate and $\overline{i+1}$ denotes the positive integer in $\{1,\ldots,s\}$ with $i+1\equiv\overline{i+1}$ mod $s$. Notice that any indecomposable $A$-module $M$ is uniserial and completely determined up to isomorphism by its socle $soc(M)$ and its Loewy length $\ell(M)$. We denote an indecomposable $A$-module $M$ by $X_i(m)$, if $soc(M)$ is isomorphic to $X_i$ and $\ell(M)$ is $m$. We set $X_i(0)=0$ for all $1\leq i\leq s$. It is easy to verify that if $A$ is symmetric Nakayama, then $M$ is a stable brick in $A$-$\stmod$ if and only if $\ell(M)\leq s$ or $n+1-s\leq\ell(M)\leq n$, where the second inequality $n+1-s\leq \ell(M) \leq n$ is a consequence of the corresponding homomorphism in $A$-mod factoring through a projective-injective module.

From the picture of the AR-quiver $\Gamma_{A}$ of the above self-injective Nakayama algebra $A$ ($\cong \mathbb{Z}A_{n+1}/\langle\tau^{-s}\rangle$), we can easily read the following short exact sequence in $A$-mod (for $1\leq i\leq s$, $0< k\leq r\leq n$ and $1\leq j\leq n+1-r$)$\colon$
\begin{equation}\label{ses}
 0\rightarrow X_{i}(r)\xrightarrow{\begin{scriptsize}\begin{pmatrix}\epsilon_{1}\\ \pi_{1}  \end{pmatrix}\end{scriptsize}} X_{i}(r+j)\oplus \tau^{-k}(X_{i}(r-k)) \xrightarrow{\begin{scriptsize}\begin{pmatrix}\pi_{2},\epsilon_{2} \end{pmatrix}\end{scriptsize}} \tau^{-k} (X_{i}(r-k+j))\rightarrow 0,
 \vspace{0.1cm}
\end{equation}
where $\epsilon_{m},\pi_{m}$ for $m\in\{1,2\}$ are the compositions of irreducible maps in the sectional paths of $\Gamma_{A}$. The sequence $(3.1)$ induces the following non-split triangle in $A$-$\stmod$$\colon$
\begin{equation}\label{ses}
 X_{i}(r)\xrightarrow{\begin{scriptsize}\begin{pmatrix}\underline{\epsilon_{1}}\\ \underline{\pi_{1}}  \end{pmatrix}\end{scriptsize}} X_{i}(r+j)\oplus \tau^{-k}(X_{i}(r-k)) \xrightarrow{\begin{scriptsize}\begin{pmatrix}\underline{\pi_{2}},\underline{\epsilon_{2}} \end{pmatrix}\end{scriptsize}} \tau^{-k} (X_{i}(r-k+j))\rightarrow \Omega^{-1}(X_{i}(r)).
 \vspace{0.2cm}
\end{equation}
Notice that in the above triangle, the Loewy lengths of all modules are less than or equal to $n+1$, and we treat $X_i(n+1)=0$  in $A$-$\stmod$ for each $i$ since $X_{i}(n+1)$ is an indecomposable projective-injective module.

\begin{Lem}\label{orthogonal-system-of-A_n-extend-to-sms-1} Let $A$ be an RFS algebra of type $\{(A_{n},s/n,1)\mid n,s\in\mathbb{N}\}$  {\rm(that is, $A$ is a self-injective Nakayama algebra with $s$ simple modules and Loewy length $n+1$)} and $\mathcal{S}$  a Nakayama-stable orthogonal system in $A$-$\stmod$. Then $\mathcal{S}$ extends to an sms.
\end{Lem}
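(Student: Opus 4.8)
The plan is to mimic the inductive enlargement used in Lemma~\ref{orthogonal-system-extend-to-sms} and in Case~3 of Lemma~\ref{two-side-orthogonal}, reducing everything to Corollary~\ref{symmetric-case}. If $\mathcal{^{\perp}S^{\perp}}=\{0\}$ then $\mathcal S$ is already an sms by Corollary~\ref{symmetric-case}; otherwise I would enlarge $\mathcal S$ to a strictly larger Nakayama-stable orthogonal system and iterate, the process terminating after finitely many steps (as $A$ is representation-finite) at some $\mathcal S_p$ with $\mathcal{^{\perp}{S}_p^{\perp}}=\{0\}$, which is then an sms containing $\mathcal S$. The first observation is that, because $\mathcal S$ is Nakayama-stable and $\nu$ is a self-equivalence, $\mathcal{^{\perp}S^{\perp}}$ is $\nu$-stable; hence for any indecomposable $M\in\mathcal{^{\perp}S^{\perp}}$ the whole orbit $O_\nu(M)$ lies in $\mathcal{^{\perp}S^{\perp}}$, is automatically orthogonal to $\mathcal S$, and is disjoint from $\mathcal S$ (elements of $\mathcal S$ are not self-orthogonal). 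Thus the single enlargement step reduces to the following: whenever $\mathcal{^{\perp}S^{\perp}}\neq\{0\}$, produce an indecomposable $M\in\mathcal{^{\perp}S^{\perp}}$ which is a stable brick and whose orbit $O_\nu(M)$ is an orthogonal system, so that $\mathcal S\cup O_\nu(M)$ is the desired larger system.

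The new difficulty, compared with Lemma~\ref{orthogonal-nu-orbit}, is that over a self-injective Nakayama algebra not every indecomposable is a brick: by the brick description recalled above, $X_i(r)$ is a brick only when $\ell(X_i(r))\le s$ or $\ell(X_i(r))\ge n+1-s$. So we cannot simply adjoin the orbit of an arbitrary module. The plan is to make an extremal choice: take $M=X_i(r)\in\mathcal{^{\perp}S^{\perp}}$ of minimal Loewy length $r$, and show $M$ is automatically a brick. Here I would use that $\mathcal{^{\perp}S^{\perp}}$ is closed under extensions and direct summands, and that on $A$-$\stmod$ one has $\tau^{-s}\cong\mathrm{id}$ (since ${}_{s}\Gamma_{A}=\mathbb{Z}A_n/\langle\tau^{-s}\rangle$). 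If $r$ lay in the non-brick range $s+1\le r\le n-s$, then specializing the short exact sequence displayed above (the one relating $X_i(r)$, $X_i(r+j)\oplus\tau^{-k}X_i(r-k)$ and $\tau^{-k}X_i(r-k+j)$) to $j=k=s$ yields a triangle
$$X_i(r)\longrightarrow X_i(r+s)\oplus\tau^{-s}X_i(r-s)\longrightarrow \tau^{-s}X_i(r)\longrightarrow\Omega^{-1}X_i(r),$$
in which both outer terms are isomorphic to $M$ (using $\tau^{-s}\cong\mathrm{id}$), hence lie in $\mathcal{^{\perp}S^{\perp}}$. Closure under extensions and summands then forces $\tau^{-s}X_i(r-s)\cong X_i(r-s)$, of strictly smaller length $r-s\ge1$, into $\mathcal{^{\perp}S^{\perp}}$, contradicting minimality. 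Thus the minimal $M$ satisfies $r\le s$ or $r\ge n+1-s$, i.e.\ it is a brick.

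It remains to show $O_\nu(M)$ is orthogonal, and this is where the covering theory of Subsection~2.2 and Lemma~\ref{Riedtmann-lemma} enter. Since $\nu\cong\tau^{-n}$ and $\Omega$ commutes with $\nu$, preserves $\stHom$, and interchanges the lengths $r$ and $n+1-r$, the orthogonality of $O_\nu(M)$ is unchanged under replacing $M$ by $\Omega M$; so I may assume $M=X_i(r)$ is short ($r\le s$), the symmetric case $\nu\cong\mathrm{id}$ making the orbits trivial anyway. Using $\stHom_A(X_i(r),X_j(r))\cong\bigoplus_z\Hom_{k(\mathbb{Z}A_n)}(\tau^{sz}X_i(r),X_j(r))$ together with Lemma~\ref{Riedtmann-lemma}, one computes that $\stHom_A(M,\nu^eM)\neq0$ with $\nu^eM\neq M$ would force the residue $\overline{ne}$ modulo $s$ to satisfy $\overline{ne}\le r-1$; taking $k=j=\overline{ne}$ in the same family of triangles would then exhibit $\tau^{-k}X_i(r)\cong\nu^eM\in\mathcal{^{\perp}S^{\perp}}$ as the right-hand term and $X_i(r-\overline{ne})$, of length $r-\overline{ne}<r$, as a summand of the middle term, again contradicting minimality of $r$. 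Hence $\stHom_A(M,\nu^eM)=0$ whenever $\nu^eM\neq M$, so $O_\nu(M)$ is an orthogonal system, completing the enlargement step.

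The main obstacle is precisely the content of the two preceding paragraphs: unlike the types in Lemma~\ref{orthogonal-nu-orbit}, one cannot here adjoin the orbit of an arbitrary indecomposable, and it is the extremal choice combined with the explicit Nakayama triangles and the relation $\tau^{-s}\cong\mathrm{id}$ that simultaneously guarantees brick-ness and orthogonality of the orbit to be added. I expect the most delicate part to be the bookkeeping that verifies the constraints $0<k\le r$ and $1\le j\le n+1-r$ on these triangles in each subcase (short versus long minimal length, symmetric versus non-symmetric $A$); these should reduce to routine but case-dependent inequalities of the type already appearing in Lemmas~\ref{Dn-En-stable-brick}--\ref{orthogonal-nu-orbit-of-D_{3m}}. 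Once the enlargement step is in place, finiteness of the representation type and Corollary~\ref{symmetric-case} finish the proof.
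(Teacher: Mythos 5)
Your strategy is sound and, in its details, genuinely different from the paper's. The paper first reduces to the \emph{symmetric} Nakayama case via a covering functor $A$-$\stmod\rightarrow B$-$\stmod$ quoted from \cite{GLYZ}, where $\nu\cong\mathrm{id}$ makes all Nakayama orbits singletons; then, given a non-brick $X_i(as+b)\in{}^{\perp}\mathcal{S}^{\perp}$, it manufactures a brick inside ${}^{\perp}\mathcal{S}^{\perp}$ explicitly from the displayed triangles (taking $k=j=as$ when $n\geq 2as+b$ to produce $X_i(b)$, and $k=j=n-(a+1)s$ otherwise to produce $X_i(n-s+b)$), and iterates. You stay with the general self-injective Nakayama algebra and make the extremal choice of a minimal-length $M\in{}^{\perp}\mathcal{S}^{\perp}$; your $j=k=s$ triangle argument for brick-ness is correct (the constraints $s\leq r$ and $s\leq n+1-r$ hold precisely in the non-brick range) and is arguably cleaner than the paper's case split, besides avoiding the external covering result. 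The price is that you must verify orthogonality of $O_\nu(M)$ by hand, which the paper's symmetric reduction renders vacuous.

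One step needs repair: the reduction ``I may assume $M$ is short by replacing $M$ with $\Omega M$.'' Applying $\Omega$ does preserve orthogonality of the orbit, but your subsequent contradiction uses two properties of $M$ that $\Omega M$ need not inherit: membership in ${}^{\perp}\mathcal{S}^{\perp}$ (this subcategory is $\nu$-stable but not $\Omega$-stable, since $\Omega({}^{\perp}\mathcal{S})={}^{\perp}(\Omega\mathcal{S})$ and $\mathcal{S}$ need not be $\Omega$-stable) and minimality of the Loewy length. Fortunately the repair is mechanical: for a long brick $M=X_i(r)$ with $r\geq n+1-s$ the same covering computation (only the $z=0$ summand can contribute) shows that $\stHom_A(M,\tau^{-c}M)\neq 0$ forces $c\leq\min\{r-1,\,n-r\}$, so the triangle with $k=j=c$ is again legitimate and exhibits $X_i(r-c)\in{}^{\perp}\mathcal{S}^{\perp}$ of strictly smaller length, contradicting minimality exactly as in your short case. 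With that adjustment, and the bookkeeping you already flag, the proof goes through and yields the statement of Lemma~\ref{orthogonal-system-of-A_n-extend-to-sms-1}.
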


\begin{proof} Let $A$ be as above and $B$ be the symmetric Nakayama algebra with $e$ simple modules and Loewy length $n+1$, where $e$ is the greatest common divisor of $s$ and $n$. Then there is a covering of stable translation quivers $\pi\colon$$_{s}\Gamma_{A}\longrightarrow$ $_{s}\Gamma_{B}\cong$ $_{s}\Gamma_{A}/\langle \nu\rangle$ (where $\nu$ is the Nakayama automorphism of $_{s}\Gamma_{A}$), which induces a covering functor $F\colon$$A$-$\stmod\longrightarrow B$-$\stmod$ (cf.
\cite[Lemma 4.15]{GLYZ}). Consequently, if $\mathcal{S}$ is an orthogonal system in $B$-$\stmod$, then $\mathcal{S}$ is an sms of $B$-$\stmod$ if and only if $F^{-1}(\mathcal{S})$ is an sms of $A$-$\stmod$ (cf.
\cite[Lemma 4.15]{GLYZ}). Therefore, without loss of generality, we can assume that $A$ is a symmetric Nakayama algebra and $n=ms$. If $m=2$, then all indecomposable modules are stable bricks. By the proof of  Corollary \ref{orthogonal-system-extend-to-sms}, $\mathcal{S}$ extends to an sms. Therefore we can assume that $m>2$.

If $\mathcal{^{\perp}S^{\perp}}=\{0\}$, then by Corollary \ref{symmetric-case}, $\mathcal{S}$ is an sms.

Otherwise $\mathcal{^{\perp}S^{\perp}}\neq\{0\}$, we can take a nonzero indecomposable module $X_i(as+b)\in \mathcal{^{\perp}S^{\perp}}$ for some positive integers $0\leq a\leq m-1$ and $1\leq b\leq s$.
If  $X_i(as+b)$ is a stable brick, that is, $a=0$ and $1\leq b\leq s$
 (or $a=m-1$ and $1\leq b\leq s$), then let $\mathcal{S}_{1}:=\mathcal{S}\cup \{X_i(as+b)\}$.

 If $X_i(as+b)$ is not a stable brick, then $1\leq a\leq m-2$ and $1\leq b\leq s$, and there are two cases to be considered.

{\it Case 1.}  $n\geq 2as+b.$ Consider the following triangle
\begin{equation}\label{ses1}
 X_{i}(as+b)\xrightarrow{\begin{scriptsize}\begin{pmatrix}\underline{\epsilon_{1}}\\ \underline{\pi_{1}}  \end{pmatrix}\end{scriptsize}} X_{i}(2as+b)\oplus X_{i}(b) \xrightarrow{\begin{scriptsize}\begin{pmatrix}\underline{\pi_{2}},\underline{\epsilon_{2}} \end{pmatrix}\end{scriptsize}}  X_{i}(as+b)\rightarrow \Omega^{-1}(X_{i}(as+b))
 \vspace{0.2cm}
\end{equation}
by taking $r=as+b,k=as$ and $j=as$ in (\ref{ses}). We know that $X_i(b)$ is a stable brick. Apply \underline{Hom}$_A(S,-)$ and  \underline{Hom}$_A(-,S)$ for all $S\in\mathcal{S}$  to the triangle \eqref{ses1}, we get that $X_i(b)\in \mathcal{^{\perp}S^{\perp}}$. Let $\mathcal{S}_{1}:=\mathcal{S}\cup \{X_i(b)\}$.

{\it Case 2.}  $n<2as+b.$ Consider the following triangle
\begin{equation*}
 X_{i}(as+b)\xrightarrow{\begin{scriptsize}\begin{pmatrix}\underline{\epsilon_{1}}\\ \underline{\pi_{1}}  \end{pmatrix}\end{scriptsize}} X_{i}(n-s+b)\oplus X_{i}((2a+1)s-n+b) \xrightarrow{\begin{scriptsize}\begin{pmatrix}\underline{\pi_{2}},\underline{\epsilon_{2}} \end{pmatrix}\end{scriptsize}}  X_{i}(as+b)\rightarrow \Omega^{-1}(X_{i}(as+b))
 \vspace{0.2cm}
\end{equation*}
by taking $r=as+b,k=n-(a+1)s$ and $j=n-(a+1)s$ in (\ref{ses}). Notice that $(2a+1)s-n+b> s+1$ when $n<2as+b.$ Similarly to Case 1, we get that $X_{i}(n-s+b)$ is a stable brick and $X_{i}(n-s+b)\in \mathcal{^{\perp}S^{\perp}}$. Let $\mathcal{S}_{1}:=\mathcal{S}\cup \{X_{i}(n-s+b)\}$.

From the above discussion, in any case we get a Nakayama-stable orthogonal system $\mathcal{S}_{1}$ containing $\mathcal{S}$ properly. Repeat the above process, since $A$ is of finite representation type,
 there is a positive integer $q$ such that  $^{\perp}{\mathcal{S}_q}^{\perp}=\{0\}$.  By Corollary \ref{symmetric-case}, $\mathcal{S}_{q}$ is an sms.
\end{proof}

The last lemma deals with the remaining RFS algebras, that is, the RFS algebras of type $(D_{3m},s/3,1)$ with $m\geq2,3\nmid s$.

\begin{Lem}\label{orthogonal-system-of-D_n-extend-to-sms-1} Let $A$ be an RFS algebra of type $\{(D_{3m},s/3,1)\mid m\geq2,3\nmid s\}$ and $\mathcal{S}$  a Nakayama-stable orthogonal system in $A$-$\stmod$. Then $\mathcal{S}$ extends to an sms.
\end{Lem}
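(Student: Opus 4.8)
The plan is to run the same iterative enlargement scheme used in Lemma \ref{orthogonal-system-extend-to-sms} and Lemma \ref{orthogonal-system-of-A_n-extend-to-sms-1}: starting from $\mathcal{S}$, I repeatedly adjoin the $\nu$-orbit of a suitable object of $\mathcal{^{\perp}S^{\perp}}$ until this subcategory becomes zero, at which point Corollary \ref{symmetric-case} finishes the proof. Since $A$ is representation-finite an orthogonal system has bounded cardinality, so any strictly increasing chain $\mathcal{S}\subsetneq\mathcal{S}_1\subsetneq\cdots$ terminates after finitely many steps; the only thing to guarantee is that each step produces a strictly larger Nakayama-stable orthogonal system. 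First I would dispose of the non-standard algebra, which by Remark \ref{standard-non-standard-pair} can only occur when $s=1$ (where $A$ and its standard counterpart $A_s$ are both symmetric, so Nakayama-stability is vacuous): the position- and $\Hom$-compatible bijection between orthogonal systems of $A$ and of $A_s$ established in Case 4 of Lemma \ref{two-side-orthogonal} carries orthogonal systems to orthogonal systems and sms's to sms's, so it suffices to treat the standard case, where $A$-$\stind\cong k(_{s}\Gamma_{A})\cong k(\mathbb{Z}D_{3m})/\langle\tau^{-(2m-1)s}\rangle$.

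So assume $A$ standard and $\mathcal{^{\perp}S^{\perp}}\neq\{0\}$, and pick a nonzero indecomposable $M=(p,q)\in\mathcal{^{\perp}S^{\perp}}$. Because $\mathcal{S}$ is Nakayama-stable and $\nu$ is an auto-equivalence permuting $\mathcal{S}$, the subcategory $\mathcal{^{\perp}S^{\perp}}$ is itself $\nu$-stable. Hence once I locate a \emph{good} vertex (a low vertex with $1\le q<m$, or a high vertex with $q\ge 3m-1$) inside $\mathcal{^{\perp}S^{\perp}}$, its whole $\nu$-orbit lies in $\mathcal{^{\perp}S^{\perp}}$, is an orthogonal system by Lemma \ref{orthogonal-nu-orbit-of-D_{3m}}(2), and is orthogonal to $\mathcal{S}$; moreover a good vertex $W\in\mathcal{^{\perp}S^{\perp}}$ is a stable brick, so $\stHom_A(W,W)=k\neq 0$ forces $W\notin\mathcal{S}$, and $\mathcal{S}_1:=\mathcal{S}\cup O_{\nu}(W)$ is a strictly larger Nakayama-stable orthogonal system. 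If $M$ is already good I take $W=M$ and proceed.

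The heart of the argument is the remaining case, where $M=(p,q)$ is a \emph{bad} vertex, i.e. a low vertex with $m\le q\le 3m-2$; by Lemma \ref{orthogonal-nu-orbit-of-D_{3m}} these are the only vertices that fail to be good. Imitating the triangles used in the proof of Lemma \ref{orthogonal-system-of-A_n-extend-to-sms-1}, I would exhibit, after lifting to the mesh category $k(\mathbb{Z}D_{3m})$ through the covering functor, a non-split triangle
$$M\longrightarrow V\oplus W\longrightarrow M'\longrightarrow \Omega^{-1}M$$
whose two outer terms $M$ and $M'$ both lie in $\mathcal{^{\perp}S^{\perp}}$ — arranging $M'$ to be $M$ itself or a $\nu$-translate of $M$, both of which belong to the $\nu$-stable subcategory $\mathcal{^{\perp}S^{\perp}}$ — and one of whose middle summands, say $W$, is a good stable brick. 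Applying the long exact sequences $\stHom_A(S,-)$ and $\stHom_A(-,S)$ to this triangle for every $S\in\mathcal{S}$, the vanishing of the outer $\Hom$-groups forces $\stHom_A(S,V\oplus W)=0=\stHom_A(V\oplus W,S)$ by exactness, so $V\oplus W\in\mathcal{^{\perp}S^{\perp}}$; since $\mathcal{^{\perp}S^{\perp}}$ is closed under direct summands, the good brick $W$ lies in $\mathcal{^{\perp}S^{\perp}}$, and I adjoin $O_{\nu}(W)$ as above.

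The main obstacle is the construction of these triangles and the verification that a good summand can always be extracted: unlike the uniserial modules of the Nakayama case, the $D_{3m}$-modules carry the two-arm wing structure, so one must read off the relevant short exact sequences from the hammocks described in Lemma \ref{support-of-Dn} and split into cases according to the position of $(p,q)$ — in particular according to whether the reflected outer term stays a low vertex or crosses into the high/wing region, paralleling Cases $1$ and $2$ of Lemma \ref{orthogonal-system-of-A_n-extend-to-sms-1} — checking in each case, via Lemma \ref{orthogonal-nu-orbit-of-D_{3m}}, that the produced summand $W$ is genuinely a stable brick whose $\nu$-orbit is an orthogonal system. Granting this, the iteration terminates with some $\mathcal{S}_q$ satisfying $^{\perp}{\mathcal{S}_q}^{\perp}=\{0\}$, and Corollary \ref{symmetric-case} shows that $\mathcal{S}_q$ is an sms extending $\mathcal{S}$.
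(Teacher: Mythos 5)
Your overall skeleton (iterate adjoining $\nu$-orbits of suitable objects of $\mathcal{^{\perp}S^{\perp}}$, stop when $\mathcal{^{\perp}S^{\perp}}=\{0\}$, invoke Corollary \ref{symmetric-case}) matches the paper, and your reduction to the standard case and your treatment of ``good'' vertices (low with $q<m$, or high) are fine. But the proof has a genuine gap exactly where the difficulty of this lemma lies: the case where the chosen $M=(p,q)\in\mathcal{^{\perp}S^{\perp}}$ is a low vertex with $m\le q\le 3m-2$. There you only \emph{postulate} a non-split triangle $M\to V\oplus W\to M'\to\Omega^{-1}M$ with $M'$ a $\nu$-translate of $M$ and with a good stable brick $W$ as a middle summand, and you explicitly concede (``Granting this\dots'') that you have not constructed it or verified that a good summand can be extracted. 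This is not a routine verification that can be waved through by analogy with the uniserial triangles $(3.2)$ of the Nakayama case: in type $D_{3m}$ the relevant self-extensions of a wing module need not have a middle term containing a summand in the good region, and no such family of triangles is exhibited anywhere in the paper. As written, the hard case is assumed rather than proved.

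The paper avoids extension triangles entirely in this case. It first shows that if $\mathcal{S}$ contains a high vertex $S_0=(1,3m)$, then $\mathcal{^{\perp}S^{\perp}}\subseteq{}^{\perp}S_0^{\perp}$ consists only of vertices $(i,j)$ with $1\le j<m$, all of which are good, so the iteration of Lemma \ref{orthogonal-system-extend-to-sms} runs without obstruction. It then reduces to this situation by a support-containment computation: if a bad vertex $X=(1,t)$ ($m\le t<3m-1$) lies in $\mathcal{^{\perp}S^{\perp}}$, then ${}^{\perp}X^{\perp}\subseteq{}^{\perp}S_0^{\perp}$ for the high vertex $S_0=(1,3m)$, whence $\mathcal{S}\subseteq{}^{\perp}X^{\perp}\subseteq{}^{\perp}S_0^{\perp}$ and so $S_0$ itself already lies in $\mathcal{^{\perp}S^{\perp}}$ and can be adjoined directly. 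If you want to salvage your approach you should replace the hypothetical triangle by this observation (or actually construct and check the triangles case by case from the hammocks of Lemma \ref{support-of-Dn}); note also that the paper first uses covering theory to reduce from general $s$ to $s=1$, which makes the algebra symmetric and lets the whole analysis take place in $\mathbb{Z}D_{3m}/\langle\tau^{-(2m-1)}\rangle$, whereas you keep general $s$ and would have to carry the $\nu$-orbit bookkeeping through every step of the bad-vertex case.
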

\begin{proof}
By covering theory and the standard-non-standard correspondence (cf. \cite[Section 4]{CKL} and the proof of Lemma \ref{two-side-orthogonal}), we only need  to consider the standard RFS algebras of type $(D_{3m},1/3,1)$. Notice that in this case the algebras are symmetric and the orthogonal systems in $A$-$\stmod$ are automatically Nakayama-stable. Now suppose that $A$ is an RFS algebras of type $(D_{3m},1/3,1)$ and $\mathcal{S}$ is an orthogonal system in $A$-$\stmod$. Then the stable AR-quiver $_s\Gamma_A$ has the form $\mathbb{Z}D_{3m}/ \langle\tau^{-(2m-1)}\rangle$ and the stable category $A$-$\stmod$ is determined by the mesh category $k(_{s}\Gamma_{A})$.

In the following proof, we often identify the indecomposable objects in $A$-$\stmod$ with vertices in $_s\Gamma_A$. We first show that if $\mathcal{S}$ contains a high vertex $S_{0}$, then $\mathcal{S}$ extends  to an sms. Without loss of generality, let $S_{0}=(1,3m)$ be a high vertex in $\mathcal{S}$. By the description of support of a high vertex in $k(\mathbb{Z}D_{3m})$,
$$^{\perp}S_{0}^{\perp}=\{ (i,j) \mid m+1\leq i\leq 2m-1, i+j\leq 2m \}\cup \{ (i,j) \mid 2\leq i\leq m-1, i+j\leq m \}.$$
It follows that if $X=(i,j)$ is in  $^{\perp}S_{0}^{\perp}$, then we have  $1\leq j<m$. By Lemma \ref{orthogonal-nu-orbit-of-D_{3m}},
all objects in $^{\perp}S_{0}^{\perp}$ are stable bricks, it follows that all objects in $\mathcal{^{\perp}S^{\perp}}$ are all stable bricks. Similarly to the proof of Lemma \ref{orthogonal-system-extend-to-sms}, $\mathcal{S}$ extends to an sms.

Next we claim that any orthogonal system $\mathcal{S}$ in $A$-$\stmod$ extends to an orthogonal system which contains a high vertex. We can assume that $\mathcal{S}$ does not contain any high vertex. If $\mathcal{^{\perp}S^{\perp}}=\{0\}$, by Corollary \ref{symmetric-case}, then $\mathcal{S}$ is an sms. By Case 2 of Lemma \ref{two-side-orthogonal}, $\mathcal{S}$
contains a unique high vertex, which is a contradiction. Therefore $\mathcal{^{\perp}S^{\perp}}\neq\{0\}$ and we consider two cases.

 {\it Case 1.}  All indecomposable objects in $\mathcal{^{\perp}S^{\perp}}$ are stable bricks. Similarly to the proof of Lemma
\ref{orthogonal-system-extend-to-sms},
we can extend $\mathcal{S}$ to an sms with a unique high vertex.

{\it Case 2.} There is an indecomposable object $X$ in $\mathcal{^{\perp}S^{\perp}}$ which is not a stable brick.
Without loss of generality, we can assume that $X=(1,t)$ for some $ m\leq t<3m-1$.
By the description of support of vertex
in $k(\mathbb{Z}D_{3m})$, there are three subcases to be considered (where $S_0=(1,3m)$)$\colon$
\begin{enumerate}[(i)]
\item If $m+1\leq t\leq 2m-1$, then
$^{\perp}X^{\perp}={^{\perp}S_{0}^{\perp}}\setminus(\{(i,j)\mid t+1\leq i+j\leq 2m, m+1\leq i\leq t  \}
\cup \{(i,j)\mid t-m+2\leq i+j\leq m , 2\leq i\leq t-m+1 \}).$
\item If $2m+1\leq t\leq 3m-2$, then
$^{\perp}X^{\perp}={^{\perp}S_{0}^{\perp}}\setminus(\{(i,j)\mid t-2m+2\leq i+j\leq m, 2\leq i\leq t-2m+1  \}
\cup \{(i,j)\mid t-m+2\leq i+j\leq 2m , m+1\leq i\leq t-m+1 \}).$
\item If $t=m$ or $2m$, then $^{\perp}X^{\perp}={^{\perp}S_{0}^{\perp}}$.
\end{enumerate}
Therefore, $\mathcal{S}\subseteq$ $^{\perp}X^{\perp}\subseteq {^{\perp}S_{0}^{\perp}}$ and $S_{0}\in \mathcal{^{\perp}S^{\perp}}$. This shows that we can add the high vertex $S_0$ to $\mathcal{S}$ and reduce the proof to Case 1.
\end{proof}

Finally, we assume that $A$ is a representation-finite symmetric algebra. For any idempotent
element $e$ in $A$,  $eAe$ is also a representation-finite symmetric algebra and the
idempotent embedding functor $\iota\colon eAe$-$\stmod\hookrightarrow A$-$\stmod$
is fully faithful (see \cite[Page 12]{M}). Thus we get the following corollary of Theorem \ref{orthogonal-system-RFS-extend-to-sms}.

\begin{Cor}\label{idempotent-extend-to-sms} Let $A$ be a representation-finite symmetric algebra and $e$  an idempotent element of $A$. If $\mathcal{S}$ is an sms in $eAe$-$\stmod$, then $\iota(\mathcal{S})$ extends to an sms in $A$-$\stmod$.
\end{Cor}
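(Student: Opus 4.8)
The plan is to recognize $\iota(\mathcal{S})$ as a Nakayama-stable orthogonal system in $A$-$\stmod$ and then to quote Theorem \ref{orthogonal-system-RFS-extend-to-sms}. First I would verify orthogonality using the full faithfulness of $\iota$. As recalled just before the statement, $eAe$ is again representation-finite symmetric and the idempotent embedding $\iota\colon eAe$-$\stmod\hookrightarrow A$-$\stmod$ is fully faithful. Since $\mathcal{S}$ is an sms in $eAe$-$\stmod$, it is in particular an orthogonal system there, and full faithfulness yields $\stHom_A(\iota S,\iota T)\cong\stHom_{eAe}(S,T)$ for all $S,T\in\mathcal{S}$. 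Hence each $\iota S$ is a stable brick (so in particular $\iota S\neq 0$, as $\stHom_A(\iota S,\iota S)\cong k$), and $\stHom_A(\iota S,\iota T)=0$ for $S\neq T$. Because a fully faithful functor reflects isomorphisms, $\iota$ is injective on isomorphism classes, so distinct members of $\mathcal{S}$ give distinct members of $\iota(\mathcal{S})$; thus $\iota(\mathcal{S})$ is an orthogonal system in $A$-$\stmod$.

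Next I would dispose of Nakayama-stability for free. Since $A$ is symmetric, $A\cong DA$ as bimodules, so the Nakayama functor $\nu=D\Hom_A(-,A)$ is isomorphic to the identity on $A$-mod and hence on $A$-$\stmod$. Consequently \emph{every} orthogonal system in $A$-$\stmod$ is automatically Nakayama-stable, and in particular so is $\iota(\mathcal{S})$. Having produced a Nakayama-stable orthogonal system $\iota(\mathcal{S})$, I would then apply Theorem \ref{orthogonal-system-RFS-extend-to-sms} to conclude that it extends to an sms in $A$-$\stmod$.

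The one point requiring care, which is the main (though minor) obstacle, is that Theorem \ref{orthogonal-system-RFS-extend-to-sms} is stated for RFS algebras, i.e.\ indecomposable basic algebras not isomorphic to $k$, whereas here $A$ is an arbitrary representation-finite symmetric algebra. I would handle this by a standard reduction. The stable module category, together with orthogonal systems, Nakayama-stability (here all trivial, as $\nu\cong\mathrm{id}$), and sms's, is preserved under the stable equivalence induced by a Morita equivalence, so one may assume $A$ is basic. A basic representation-finite symmetric algebra decomposes as a finite product $A\cong\prod_j A_j$ of indecomposable algebras, inducing a product decomposition $A$-$\stmod\cong\prod_j A_j$-$\stmod$ under which orthogonal systems and sms's split componentwise. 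Each factor $A_j$ is either isomorphic to $k$, contributing the zero stable category, or is an RFS algebra, to which Theorem \ref{orthogonal-system-RFS-extend-to-sms} applies directly; reassembling the resulting sms's on the nontrivial factors extends $\iota(\mathcal{S})$ to an sms in $A$-$\stmod$, as desired.
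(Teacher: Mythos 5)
Your proof is correct and follows essentially the same route the paper intends: use full faithfulness of the idempotent embedding to see that $\iota(\mathcal{S})$ is an orthogonal system, note Nakayama-stability is automatic since $A$ is symmetric, and invoke Theorem \ref{orthogonal-system-RFS-extend-to-sms}. Your explicit reduction to the basic indecomposable (RFS) case via Morita equivalence and block decomposition is a detail the paper leaves implicit, and it is handled correctly.
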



\begin{thebibliography}{88}
	\bibitem{A1}{{\sc H. Asashiba,} The derived equivalence classification of representation-finite self-injective algebras. J. Algebra {\bf 214} (1999), 182-221.}

\bibitem{A2}{{\sc H. Asashiba,} On a lift of an individual stable equivalence to a standard derived equivalence for representation-finite self-injective algebras. Algebr. Represent. Theor. {\bf 6}  (2003), 427-447.}
	
\bibitem{ASS}{{\sc I. Assem, D. Simson and A. Skowroneski,} \textsl{Elements of the representation theory of associative algebras.} Cambridge University Press, 2006.}
	
\bibitem{BG}{{\sc K. Bongartz and P. Gabriel,} Covering spaces in representation theory. Invent. Math. \textbf{65}
	(1982), 331-378.}
\bibitem{BLR}{{\sc O. Bretscher, C. L\"{a}ser and C. Riedtmann,} Self-injective and simply connected
	algebras. Manuscripta Math. {\bf 36} (1982), 331-378.}


\bibitem{BS}{{\sc J. Bia{\l}kowski and A. Skowro\'{n}ski,} Calabi-Yau stable module categories of finite type. Colloq. Math. {\bf 109} (2007), 97-128.}

	
\bibitem{CKL}{{\sc A. Chan, S. Koenig and Y. Liu,} Simple-minded systems, configurations and mutations for representation-finite self-injective algebras. J. Pure Appl. Algebra \textbf{219} (2015), 1940-1961.}

\bibitem{CLZ}{{\sc A. Chan, Y. Liu and Z. Zhang,} On Simple-minded systems and $\tau$-periodic modules of self-injective algebras. J. Algebra {\bf 560}  (2020), 416-441.}


\bibitem{C}{{\sc R. Coelho Sim\~{o}es,} Mutations of simple-minded systems in Calabi-Yau categories generated by a spherical object. Forum Math. \textbf{29} (5) (2017), 1065-1081.}

\bibitem{CP}{{\sc R. Coelho Sim\~{o}es and D. Pauksztello,} Simple-minded systems and reduction for negative Calabi-Yau triangulated categories.  Trans. Amer. Math. Soc. {\bf 373}  (2020), 2463-2498.}

\bibitem{CPP}{ {\sc R. Coelho Sim\~{o}es, D. Pauksztello and D. Ploog,} Functorially finite hearts, simple-minded systems in negative cluster categories, and noncrossing partitions. arXiv:2004.00604.}
	
\bibitem{Dugas}{{\sc A. Dugas,} Torsion pairs and simple-minded systems in
		triangulated categories. Appl. Categ. Structures {\bf 23}  (2015), 507-526.}

\bibitem{DV}{{\sc A. Dugas and R. Mart\'{i}nez Villa,} Stable equivalences of graded algebras. J. Algebra  {\bf 320} (2008), 4215-4221.}

	
\bibitem{GLYZ}{{\sc J. Guo, Y. Liu, Y. Ye and Z. Zhang,} An explicit  construction of simple-minded systems over self-injective Nakayama algebras.  To appear in Colloq. Math..  Available at$\colon$http://math0.bnu.edu.cn/$^{\sim}$liuym/}

\bibitem{H}{{\sc D. Happel,} \textsl{Triangulated categories in the representation theory of finite dimensional algebras, London Mathematical Society Lecture Notes 119 (University Press, Cambridge, 1988).}

\bibitem{IJ}{{\sc O. Iyama ang H. Jin,} Positive Fuss-Catalan numbers and simple-minded systems in negative Calabi-Yau categories. arXiv:2002.09952.}}

\bibitem{IY}{{\sc O. Iyama and Y. Yoshino,} Mutation in triangulated categories and rigid Cohen-Macaulay modules. Invent. Math.  {\bf 172} (2008), 117-168.}

\bibitem{J1}{{ \sc H. Jin,} Cohen-Macaulay differential graded modules and negative Calabi-Yau configuration.  arXiv:1812.03737.}
	
\bibitem{J2}{{ \sc H. Jin,} Reductions of triangulated  categories and simple-minded collections.  arXiv:1907.05114.}
	
\bibitem{KL}{{\sc S. K\"onig and Y. Liu,} Simple-minded systems in stable module categories. Quart. J. Math. \textbf{63}  (2012), 653-674.}

\bibitem{M}{{\sc R. Martinez-Villa,} Properties that are left invariant under stable equivalence. Comm. Algebra \textbf{18}  (1990), 4141-4169.}	


\bibitem{RV}{{\sc I. Reiten and M. Van Den Bergh,} Noetherian hereditary abelian categories satisfying Serre duality.   J. Amer. Math. Soc. {\bf 12}  (2002), 295-366.}

	
\bibitem{Riedtmann1}{{\sc C. Riedtmann,} Algebren Darstellungsk\"{o}cher, \"{U}berlagerungen and zur\"{u}ck.	Comment. Math. Helv. \textbf{55}  (1980), 199-224.}
	
\bibitem{Riedtmann2}{{\sc C. Riedtmann,} Representation-finite self-injective algebras of class $A_n$. Lecture Notes in Math. 832, 1980, 449-520.}

 \bibitem{Riedtmann4}{{\sc C. Riedtmann,} Representation-finite self-injective algebras of class\ $D_{n}$. Compositio Math. {\bf 49} (1983), 231-282. }

\bibitem{Riedtmann3}{{\sc C. Riedtmann,} Configurations of $\mathbb{Z}D_{n}$.
  J. Algebra {\bf 82} (1983), 309-327.}

	
\end{thebibliography}
\end{document}